\documentclass[12pt,reqno]{article}

\usepackage[usenames]{color}
\usepackage{amssymb}
\usepackage{graphicx}
\usepackage{amscd}
\usepackage{xcolor}
\usepackage[colorlinks=true,
linkcolor=webgreen,
filecolor=webbrown,
citecolor=webgreen]{hyperref}

\definecolor{webgreen}{rgb}{0,.5,0}
\definecolor{webbrown}{rgb}{.6,0,0}

\usepackage{color}
\usepackage{fullpage}
\usepackage{float}

\usepackage{graphics,amsmath,amssymb}
\usepackage{amsthm}
\usepackage{amsfonts}
\usepackage{latexsym}
\usepackage{epsf}

\usepackage[noblocks]{authblk}
\usepackage{tikz}

\setlength{\textwidth}{6.5in}
\setlength{\oddsidemargin}{.1in}
\setlength{\evensidemargin}{.1in}
\setlength{\topmargin}{-.1in}
\setlength{\textheight}{8.4in}

\DeclareMathOperator{\MMP}{MMP}

\theoremstyle{plain}
\newtheorem{theorem}{Theorem}
\newtheorem{corollary}[theorem]{Corollary}
\newtheorem{lemma}[theorem]{Lemma}

\newtheorem{observation}[theorem]{Observation}

\theoremstyle{definition}
\newtheorem{definition}[theorem]{Definition}

\theoremstyle{remark}

\newcommand{\asc}{{\rm asc}}
\newcommand{\dist}{{\rm dist}}
\newcommand{\maxx}{{\rm maxx}}
\newcommand{\maxim}{{\rm maxim}}
\newcommand{\zeros}{{\rm zeros}}
\newcommand{\repeats}{{\rm repeats}}
\newcommand{\last}{{\rm last}}
\newcommand{\bottom}{{\rm bottom}}
\newcommand{\exc}{{\rm exc}}
\newcommand{\tp}{{\rm top}}

\newcommand{\Sn}{\mathbf{S}}
\newcommand{\I}{\mathbf{I}}

\title{Patterns in Inversion Sequences II:  Inversion Sequences Avoiding Triples of Relations}
\author{Megan A. Martinez\thanks{Department of Mathematics, Ithaca College, Ithaca, NY 14850, {\tt mmartinez@ithaca.edu}} \ \ 
 and Carla D. Savage\thanks{Department of Computer Science, North Carolina State University, Raleigh, NC 27695-8206, {\tt savage@ncsu.edu}}  
 }

\begin{document}

\begin{center}
\vskip 1cm{\Large\bf 
Patterns in Inversion Sequences II:  \\
\vskip .1in
Inversion Sequences Avoiding Triples of Relations
}
\vskip 1cm
\end{center}

\begin{center}
Megan Martinez\\
Department of Mathematics\\
Ithaca College\\
Ithaca, NY 14850\\
USA \\
\href{mailto:mmartinez@ithaca.edu}{\tt mmartinez@ithaca.edu} \\
\ \\
Carla Savage\\
Department of Computer Science\\
North Carolina State University\\
Raleigh, NC 27695\\
USA \\
\href{mailto:savage@ncsu.edu}{\tt savage@ncsu.edu }\\
\ \\
\end{center}

\vskip .2in

\begin{abstract}
Inversion sequences of length~$n$, $\mathbf{I}_n$, are integer sequences $(e_1, \ldots, e_n)$ with  $0 \leq e_i < i$ for each $i$. The study of patterns in inversion sequences was initiated recently by Mansour-Shattuck and Corteel-Martinez-Savage-Weselcouch through a systematic study of inversion sequences avoiding words of length 3.  We continue this investigation by reframing the notion of a length-3 pattern from a ``word of length 3,''  $w_1w_2w_3$, to a  ``triple of binary relations,'' $(\rho_1,\rho_2,\rho_3)$, and consider the set $\mathbf{I}_n(\rho_1,\rho_2,\rho_3)$ consisting of those $e \in \mathbf{I}_n$ with no $i < j < k$ such that
$e_i \rho_1 e_j$,
$e_j \rho_2 e_k$,  and
$e_i \rho_3 e_k$. We show
 that ``avoiding a triple of relations''  can characterize inversion sequences with a variety  of  monotonicity or unimodality conditions, or  with multiplicity constraints on the elements.  We uncover several interesting enumeration results and relate pattern avoiding inversion sequences to familiar combinatorial families.  
 We highlight open questions about the relationship between pattern avoiding inversion sequences and a variety of classes of pattern avoiding permutations. For several combinatorial sequences, pattern avoiding inversion sequences provide a simpler interpretation than otherwise known.

\end{abstract}

\section{Introduction}

Pattern avoiding permutations  have been studied  extensively for  their connections in computer science, biology, and other fields of mathematics.  Within combinatorics they have proven their usefulness, providing an interpretation that relates a vast array of combinatorial structures.  
See the comprehensive survey of Kitaev \cite{kitaev}.

The notion of pattern avoidance in inversion sequences was introduced by Corteel, Martinez, Savage, and Weselcouch \cite{paisI} and Mansour and Shattuck \cite{Mansour}.
An {\em inversion sequence} is an integer  sequence $(e_1,e_2,\ldots,e_n)$  satisfying $0 \leq e_i<i$ for all $i \in [n]=\{1,2, \ldots, n\}$. There is a natural bijection   $\Theta: \Sn_n \rightarrow \I_n$ from $\Sn_n$, the set of permutations of $[n]$, to  $\I_n$, the set of inversion sequences of length $n$. 
Under this bijection, $e= \Theta(\pi)$  is obtained from a  permutation $\pi=\pi_1\pi_2\ldots \pi_n \in \Sn_n$ by setting $e_i = |\{j \ | \ j < i \ {\rm and} \ \pi_j > \pi_i \}|$.  
  
 The encoding of permutations as inversion sequences 
 suggests that it could be illuminating to study patterns in inversion sequences in the same way that patterns have been studied in permutations.  The paper of Corteel, et al.\ \cite{paisI} focused on the enumeration of inversion sequences that avoid words of length three and the paper of Mansour and Shattuck
 \cite{Mansour} targets permutations of length 3.  For example, the inversion sequences $e \in \I_n$ that avoid the pattern 021 are those with no
$i < j < k$ such that $e_i < e_j > e_k$ and $e_i < e_k$.  We denote these by $\I_n(021)$.
Similarly, $\I_n(010)$ is the set of $e \in \I_n$ with no
$i < j < k$ such that $e_i < e_j > e_k$ and $e_i = e_k$.
The results  in each of these works \cite{paisI,Mansour} related pattern avoidance in inversion sequences to a number of well-known combinatorial sequences including the Fibonacci numbers, Bell numbers, large Schr\"oder numbers, and Euler up/down numbers. They also gave rise to natural sequences that previously had not appeared in the On-Line Encyclopedia of Integer Sequences (OEIS) \cite{Sloane}.

In this paper we reframe the notion of a length-3 pattern from a word of length 3 to a triple of binary relations.  For a  fixed triple of binary relations $(\rho_1,\rho_2,\rho_3)$, we study the set $\I_n(\rho_1,\rho_2,\rho_3)$ consisting of those $e \in \I_n$ with no $i < j < k$ such that
$e_i \rho_1 e_j$,
$e_j \rho_2 e_k$,  and
$e_i \rho_3 e_k$.
For example, $\I_n(<,>,<) = \I_n(021)$ and  $\I_n(<,>,=) = \I_n(010)$.  
As another example, $\I_n(\geq,\geq,\geq)$ is the set of inversion sequences in $\I_n$ with no weakly decreasing subsequence of length 3, i.e., those inversion sequences avoiding all of the words in the set $\{000,100,110,210\}$.

Table \ref{interesting} illustrates  that avoiding a single triple of relations can characterize inversion sequences with a variety  of natural monotonicity or unimodality conditions, or  with multiplicity constraints on the appearance of elements in the inversion sequence.  Moreover, as will be seen, inversion sequences avoiding a single triple of relations can provide  realizations of certain combinatorial sequences that are simpler than known realizations as pattern avoiding permutations or other combinatorial structures.

For this project, we considered all triples of relations in the set $\{<,>,\leq,\geq,=, \neq,-\}^3$.  The relation ``$-$'' on a set $S$ is all of $S \times S$; that is, $x $  ``$-$''  $y$ for all $x,y \in S$.  
There are 343 possible triples of relations (patterns).  For each pattern, $(\rho_1, \rho_2, \rho_3)$, we can consider the avoidance set, $\I_n(\rho_1, \rho_2, \rho_3)$, or the avoidance sequence $|\I_1(\rho_1, \rho_2, \rho_3)|, |\I_2(\rho_1, \rho_2, \rho_3)|, |\I_3(\rho_1, \rho_2, \rho_3)|, \ldots$. We say two patterns are {\em equivalent} if they give rise to the same avoidance sets and two patterns are {\em Wilf-equivalent} if they yield the same avoidance sequence.
For example, patterns $(\geq,\geq,\geq)$ and $(\geq,\geq,-)$ are equivalent, whereas patterns $(\geq,\geq,-)$ and $(-,\leq,\geq)$ are inequivalent, but Wilf equivalent (see Section \ref{section:772}).

The 343 patterns partition into 98 equivalence classes of patterns.  Additionally, we conjecture that there are 63 Wilf-equivalence classes.  In this paper, we enumerate a number of avoidance sets either directly or by relating them to familiar combinatorial structures.  These relationships establish Wilf-equivalence between a number of inequivalent patterns.  However, in many cases, even where enumeration is elusive, Wilf-equivalence can be proved via a bijection.  This paper presents the results we have been able to prove, documents what has not yet been settled, and highlights the most intriguing open questions.

We uncovered several interesting enumeration results beyond those discovered in the initial introduction of patterns in inversion sequences \cite{paisI,Mansour}.
For example,  the inversion sequences $e$ with no $i < j < k$ such that $ e_i = e_j \leq e_k$ are counted by the Fibonacci numbers (as are, e.g., permutations avoiding the pair (321, 3412)).
Inversion sequences with no $i < j < k$ such that $e_i < e_j \leq e_k$ are counted by powers of two (as are, e.g., permutations avoiding (213, 312)).
Inversion sequences avoiding $(- ,\neq, =)$ are counted by the Bell numbers; inversion sequences avoiding 
$(\geq, -, >)$ are counted by the large   Schr\"oder numbers;  and inversion sequences avoiding  $(- ,\geq, <)$ are counted by the Catalan numbers.
There are the same number of inversion sequences in $\I_n$ avoiding $(\neq,\neq,\neq)$ as there are Grassmannian permutations in $\Sn_n$.
$\I_n(\neq,<,\leq)$  is counted by the number of 321-avoiding separable permutations in $\Sn_n$.
$\I_n(\neq,<,\neq)$ has the same number of elements  as
 the set of permutations in $\Sn_n$ avoiding both of the patterns 321 and 2143.

Since an earlier draft of this paper was posted \cite{PAISII}, several conjectures we made have been proven. Inversion sequences avoiding $e_i > e_j \geq e_k$ were shown to be counted by the ``semi-Baxter sequence'' as defined by Bouvel, et al.\ \cite{BouvelGuerriniEtAl}. It was also shown that the semi-Baxter sequence enumerates plane permutations, which are defined as those permutations avoiding the barred pattern $21\bar{3}54$ or, equivalently, the vincular pattern $2 \underline{14} 3$. Inversion sequences avoiding $e_i \geq e_j \geq e_k$ were shown to have the same counting sequence as set partitions avoiding enhanced 3-crossings. This was shown using the obstinate kernel method by Lin \cite{Lin} and by a bijection that utilizes 0,1 fillings of Ferrers shapes by Yan \cite{Yan}. Additionally, the set $\I_n(\geq,\geq,>)$ was shown to have the same counting sequence as the Baxter permutations through use of the obstinate kernel method by Kim and Lin \cite{KimLin}. A number of open questions concerning inversion sequences still remain, and these will be highlighted throughout this paper.

Of the 63 conjectured Wilf-equivalence classes,  five classes are counted by sequences that are ultimately constant.  In the remaining 58 classes, 30 have counting sequences that appear to match already existing sequences in the OEIS. The remaining 28 resulted in new entries in the OEIS.

In Sections \ref{section:7} through \ref{section:5040}  we present our results and conjectures
 for the 30 Wilf classes of pattern-avoiding inversion sequences that (appear to) match sequences in the OEIS.  
Table \ref{other triples} gives an overview.   Even for patterns with a ``no'' in this table, we are able to prove some Wilf-equivalence results.

For the patterns whose counting sequence did not originally match a sequence in the OEIS we have some limited
  results on Wilf-equivalence and counting.  Table \ref{questions} gives an overview of the patterns in these 28 Wilf classes in addition to their newly created OEIS number.  Our results and conjectures for a few of these patterns are presented in Section 3.

In Tables \ref{interesting}, \ref{other triples}, and \ref{questions}, each row represents an equivalence class of patterns whose identifier is given in the last column.
A Wilf class of patterns is identified by the number $a_7$, the number of inversion sequences of length 7 avoiding a pattern in the class.  Within a Wilf class, equivalence classes are labeled A,B,C, etc.
So, for example,  there are three equivalence classes of patterns counted by the Catalan numbers and these classes are labeled
429A, 429B, and 429C.

This paper is intended to provide a comprehensive overview of the enumeration sequences for a large number of classes of pattern avoiding inversion sequences. Many classes can be enumerated through basic counting techniques; in these situations, we have omitted the relevant proofs for the sake of brevity.

In the remainder of this section we give some definitions that will be needed throughout this paper, keeping the number of definitions to a minimum so that subsections can be somewhat self-contained. 
Finally, for completeness, we list the triples of relations whose avoidance sequences are ultimately constant.

\subsection{Encodings of permutations}

We compare  $\Theta$ with a few other common encodings of permutations mentioned later in this paper:  Lehmer codes, and $invcodes$, which are reverse Lehmer codes.

For a sequence $t=(t_1, \ldots, t_n)$, let $t^R=(t_n, t_{n-1}, \ldots, t_1)$, and, for a set of sequences $T$, let
$T^R=\{ t^R \ | \ t \in T\}$.
For a permutation $\pi=(\pi_1, \ldots, \pi_n) \in \Sn_n$, let $\pi^C= (n+1-\pi_1, \ldots, n+1-\pi_n)$, and, for a set of permutations $P$, let $P^C = \{ \pi^C \ | \ \pi \in P\}$.  We use the following encodings.

\begin{itemize}

\item Define $\Theta: \Sn_n \rightarrow \I_n$ such that $e = \Theta(\pi)$ if and only if $e_i = |\{j \ | \ j < i \ {\rm and} \ \pi_j > \pi_i \}|$.

\item Define $L: \Sn_n \rightarrow \I_n^R$ such that $ e = L(\pi)$ if and only if $e_i = |\{j \ | \ j > i \ {\rm and} \ \pi_j < \pi_i \}|$.

\item Define $invcode:  \Sn_n \rightarrow \I_n$ such that $e=invcode(\pi)$ if and only if $e^R = L(\pi)$.
\end{itemize}
 
Note that $invcode(\pi)=e$ if and only if $e=\Theta((\pi^{C})^{R})$.   Additionally, notice that if $\Theta(\pi)=e$, then $i$ is a {\em descent} of $\pi$ (that is, $\pi_i > \pi_{i+1}$), if and only if $i$ is an {\em ascent} of $e$ (that is, $e_i < e_{i+1}$).

We will make use of another encoding, $\phi$, in Sections \ref{section:1806} and \ref{section:3720}.

\subsection{Operations on inversion sequences}

For $e = (e_1, e_2, \ldots, e_n) \in \I_n$ and any integer $t$, define
$\sigma_t(e)=(e_1',e_2', \ldots, e_n')$, where $e_i'=e_i$ if $e_i=0$, and $e_i'=e_i+t$ otherwise.  So, $\sigma_t$ adds $t$ to the nonzero elements of a sequence (notice that $t$ could be negative).

Concatenation  is used to add an element to the beginning or end of an inversion sequence.
For $e = (e_1, e_2, \ldots, e_n) \in \I_n$, we have $0 \cdot e = (0,e_1,e_2, \ldots, e_n) \in \I_{n+1}$, and, if $0 \leq i \leq n$, we have $e \cdot i = (e_1, e_2, \ldots, e_n,i) \in \I_{n+1}$.  More generally, $x \cdot y$ denotes the concatenation of two sequences or two words $x,y$.

\subsection{Statistics on inversion sequences}

In several cases, statistics on inversion sequences helped to prove or refine the results in Table \ref{other triples} and make connections with statistics on other combinatorial families.  The statistics used in this paper are defined below for an inversion sequence $e \in \I_n$.
\begin{eqnarray*}
\asc(e) & = & |\{i \in [n-1] \  | \  e_i < e_{i+1} \}|\\
\zeros(e) & = & |\{i \in [n] \  |  \ e_i =0 \}|\\
\dist(e) & = & |\{e_1, e_2, \ldots, e_n \}|\\
\repeats(e) & = & |\{i \in [n-1] \ | \ e_i \in \{e_{i+1}, \ldots, e_n\} \} |= n - \dist(e) \\
\maxim(e) & = & |\{i \in [n] \  |  \ e_i =i-1 \}|\\
\maxx(e) & = & \max \{e_1, e_2, \ldots, e_n \}\\
\last(e) & = & e_n.
\end{eqnarray*}
These statistics are, respectively, the number of ascents, the number of zeros, the number of distinct elements, the number of repeats, the number of maximal elements, the maximum element, and the last element of $e$.

\subsection{Ultimately constant avoidance sequences}

It can be easily checked that the following are the equivalence classes whose avoidance sequences are eventually constant.
\begin{tabbing}
xxxxxxxxxxxxxxx\=xxxxxxxxxxxxxxxxx\= \kill

\> pattern \> avoidance sequence\\
\\
\>$(-,-,-)$
\> $1,2,0,0,0,0, \ldots$
\\
\>$(\leq,\leq,- )$
\> $1,2,1,0,0,0,\ldots$
\\
\>$(-,-,\neq)$
\> $1,2,2,1,1,1\ldots$
\\
\>$(-,-,<)$
\> $1,2,2,2,2,2,\ldots$
\\
\>$(-,\neq,-)$
\> $1,2,2,2,2,2,\ldots$
\\
\>$(-,\neq,\neq)$
\> $1,2,3,3,3,3,\ldots$
\\
\end{tabbing}

\begin{table}[]
 \hrule
 \centering
\resizebox{1\textwidth}{!}{
\begin{tabular}{lllll}
&&&&\\

Inversion sequences $e$ satisfying: & 
are those with no &
and are &
$a_7$, &
Sect.\\

&
$i < j < k$ such that: &
counted by: &
equiv class&
\\

Monotonicity constraints:&&&&\\

$e_1=e_2=\ldots=e_{n-1}$ & $e_i \neq e_j$ & $n$ & 7,D & \ref{section:7}\\

$\exists t$: $e_1 = \ldots = e_t  \leq e_{t+1} = e_{t+2}=  \ldots = e_n$ &   $e_i < e_j \neq e_k$ & $1+n(n-1)/2$ & 22,A  & \ref{section:22} \\

$\exists t$: $e_1 = \ldots = e_t < e_{t+1} < e_{t+2} < \ldots < e_n$
& 
$e_i<e_j \geq e_k$
& $2^{n-1}$  & 64,C  & \ref{section:64}\\

$e_1 \leq e_2  \leq  \ldots \leq e_{n-1}$
&
$e_i> e_j$
& Binom$(2n-2,n-1)$ & 924  & \ref{section:924}\\

$e_1 \leq e_2  \leq  \ldots \leq e_{n-1} \leq e_n$
&
$e_j > e_k$
& Catalan number $C_n$ & 429,A  & \ref{section:429}\\

$e_1 \leq e_2  < e_3 <  \ldots < e_n$
&
$e_j \geq e_k$
& $n$ &7,B  & \ref{section:7}\\

$e_1 < e_2   <  \ldots < e_{n-1}$
&
$e_i = e_j$
& $n$ &7,C  & \ref{section:7}\\

$e_2 \geq e_3 \geq \ldots \geq e_n$
&
$e_j < e_k$
& $n$ & 7,A  & \ref{section:7}\\

\\

Unimodality constraints:\\
\\
$\exists t$: $e_1 = \ldots = e_t \leq e_{t+1} \geq  0=  \ldots =0$ 
&  
$e_i<  e_j$, $e_i < e_k$
& $1+n(n-1)/2$ &  22,B  & \ref{section:22}\\

$\exists t$: $e_1 = \ldots = e_t \leq e_{t+1} \geq e_{t+2} \geq \ldots \geq e_n$
&  
$e_i\neq e_j < e_k$
& Grassmannian perms & 121,A  & \ref{section:121}\\

$\exists t$: $e_1 = \ldots = e_t < e_{t+1} > e_{t+2} > \ldots > e_n$ 
&  
$e_ i\neq e_j\leq e_k$
& $F_{n+2}-1$ & 33,A  & \ref{section:33} \\

$\exists t$: $e_1  \leq  \ldots \leq e_t > e_{t+1} \geq e_{t+2} \geq \ldots \geq e_n$
&  
$e_i> e_j < e_k$
 & A033321 & 1265  & \ref{section:1265}\\
 
$\exists t$: $e_1  \leq  \ldots \leq e_t > e_{t+1} > e_{t+2} > \ldots > e_n$
&  
$e_i> e_j \leq e_k$
& A071356 & 1064  & \ref{section:1064}\\
 
$\exists t$: $e_1 \leq \ldots \leq e_t \geq e_{t+1} = e_{t+2} = \ldots = e_n$ 
&  
$e_i> e_j \neq e_k$
& See Section 3 & 1079,A  & \ref{section:1079}\\

$\exists t$: $e_1 < \ldots < e_t \geq e_{t+1} = e_{t+2} = \ldots = e_n$ 
&  
$e_i\geq e_j \neq e_k$
& $1+n(n-1)/2$ & 22,C  & \ref{section:22}\\

$\exists t$: $e_1 < \ldots < e_t \geq e_{t+1} \geq e_{t+2} \geq \ldots \geq e_n$ 
&  
$e_i=e_j<e_k$
& $2^{n-1}$  & 64,A  & \ref{section:64}\\

$\exists t$: $e_1 < \ldots < e_t \geq e_{t+1} > e_{t+2} > \ldots > e_n$ 
&  
 $e_i=e_j\leq e_k$
& $F_{n+1}$  & 21  & \ref{section:21}\\

$\exists (t \leq s)$: $e_1 < \ldots < e_t =  \ldots = e_{s} > \ldots > e_n$ 
&  
$e_i \geq e_j\leq e_k$, $e_i \neq e_k$
& $F_{n+2}-1$ & 33,B  & \ref{section:33}  \\

\\

Positive elements monotone:\\
\\
positive entries are strictly decreasing
& 
 $e_i < e_j \leq e_k$
& $2^{n-1}$& 64,B  & \ref{section:64} \\

positive entries are weakly decreasing
& 
$e_i < e_j <  e_k$
& $F_{2n-1}$ & 233   & \ref{section:233}\\

positive entries are strictly increasing
& 
$ e_j \geq  e_k$, $e_i <e_k$
& Catalan number $C_n$ & 429,B  & \ref{section:429} \\

positive entries are weakly increasing
& 
$ e_j >  e_k$, $e_i <e_k$
& large Schr\"oder number  & 1806,A  & \ref{section:1806} \\

\\
Multiplicity constraints:\\
\\

entries $e_2, \ldots, e_n$  are all distinct
& 
$e_i \leq e_j = e_k$
& $2^{n-1}$ & 64,D  & \ref{section:64}\\

$|\{e_1,e_2, \ldots, e_n\}| \leq 2$
& 
$e_i\neq e_j \neq e_k$, $e_i \neq e_k$
& Grassmannian perms & 121,B  & \ref{section:121}\\

positive entries are distinct
& 
$e_i < e_j = e_k$
& Bell numbers & 877,A  & \ref{section:877} \\

no three entries equal
& 
$e_i = e_j = e_k$
& Euler up/down nos. & 1385  & \ref{section:1385}\\

only adjacent entries can be equal
& 
$e_i \neq e_j \neq e_k$, $e_i \neq e_k$
& Bell numbers & 877,C  & \ref{section:877} \\

$e_s=e_t \implies |s-t|\leq 1$
& 
$e_i  = e_k$
& A229046 & 304  & \ref{section:304} \\
&&&&\\

\end{tabular}
}

\hrule

\caption{Characterizations of  inversion sequences avoiding  triples of relations.}
\label{interesting}
\end{table}

\begin{table}
\hrule 
\centering
\resizebox{0.92\textwidth}{!}{

\begin{tabular}{llllll}
Inversion sequences & appear to be & & & & \\
with no $i < j < k$ & counted by & proven?& notes/OEIS  description  & $a_7$, & Sect.\\
 such that: & OEIS seq: &&&equiv class&\\

$e_i \neq e_j$, $e_i \neq e_k$& A004275 & yes & $2(n-1)$ for $n > 1$&12,A & \ref{section:12}\\

$e_i \geq e_j$, $e_i \neq e_k$& A004275 & yes & $2(n-1)$ for $n > 1$&12,B  & \ref{section:12}\\

$e_i = e_j \leq e_k$ & A000045& yes  &Fibonacci numbers, $F_{n+1}$  & 21 & \ref{section:21}\\

$e_i < e_j \not = e_k$ &A000124& yes & Lazy caterer sequence & 22,A & \ref{section:22}\\

$e_i < e_j$, $e_i < e_k$ &A000124& yes & Lazy caterer sequence & 22,B & \ref{section:22}\\

$e_i \geq e_j \not = e_k$&A000124& yes & Lazy caterer sequence & 22,C & \ref{section:22}\\

$e_i \not = e_j \leq e_k$ &A000071& yes & $F_{n+2}-1$  & 33,A & \ref{section:33}\\

$e_i \geq e_j \leq e_k$, $e_i \not = e_k$ &A000071& yes & $F_{n+2}-1$  & 33,B & \ref{section:33}\\

$e_i = e_j < e_k$&A000079 & yes & $\I_n(001)$, $2^{n-1}$ (see \cite{paisI}) & 64,A & \ref{section:64}\\

$e_i < e_j \leq e_k$&A000079 & yes & $2^{n-1}$ & 64,B & \ref{section:64}\\

$e_i < e_j \geq e_k$&A000079 & yes & $2^{n-1}$ & 64,C & \ref{section:64}\\

$e_i \leq  e_j =e_k$&A000079 & yes & $2^{n-1}$ & 64,D & \ref{section:64}\\

$e_i \neq e_j < e_k$  & A000325&yes & Grassmannian permutations & 121,A & \ref{section:121}\\

$e_i \neq e_j \neq e_k$, $e_i \neq e_k$   & A000325& yes& Grassmannian permutations & 121,B & \ref{section:121}\\

$e_j \geq e_k $, $e_i \neq e_k$    & A000325& yes& Grassmannian permutations & 121,C & \ref{section:121}\\

$e_i \not = e_j < e_k$, $e_i \leq e_k$  & A034943& yes &321-avoiding separable perms  & 151 & \ref{section:151}\\

$e_i \neq e_j < e_k$, $e_i \neq e_k$ & A088921& yes &$\Sn_n(321,2143)$  & 185 & \ref{section:185}\\

$e_i \geq e_k$ &A049125 & no  & ordered trees, internal nodes, adj. to $\leq$ 1  leaf & 187 & \ref{section:187} \\

$e_i \leq e_j \geq e_k$, $e_i \not = e_k$& A005183 & yes & $\Sn_n(132,4312)$, $n2^{n-1} +1$ & 193 & \ref{section:193}\\

$e_i < e_j < e_k$ & A001519 & yes & $\I_n(012)$, $F_{2n-1}$  (see \cite{paisI,Mansour}) & 233 & \ref{section:233}\\

$e_i=e_k$ & A229046 & no  & recurrence $\rightarrow$ gf? & 304& \ref{section:304} \\

$e_j > e_k$ & A000108&  yes &Catalan numbers  & 429,A & \ref{section:429}\\

$e_j \geq e_k$, $e_i <e_k$  & A000108& yes &Catalan numbers  & 429,B &\ref{section:429}\\

$e_i \geq e_j $, $e_i \geq e_k$ & A000108& yes  &Catalan numbers \cite{KimLin} &  429,C &\ref{section:429}\\

$e_i \not = e_j = e_k$  & A047970& yes&  $\Sn_n({\bar{3}}  {\bar{1}} 542)$, nexus numbers   & 523  &\ref{section:523}\\

 $e_j \leq e_k$, $e_i \geq e_k$ & A108307& yes  &set partitions avoiding enhanced 3-crossings  & 772,A  &\ref{section:772}\\
 
$e_i \geq e_j \geq e_k$ &A108307& yes &set partitions avoiding enhanced 3-crossings \cite{Lin, Yan}  & 772,B  &\ref{section:772}\\

$e_i < e_j =e_k$ & A000110 & yes &$\I_n(011)$ (see \cite{paisI}), Bell numbers $B_n$ &  877,A  &\ref{section:877}\\

$e_i = e_j \geq e_k$ & A000110 & no &$\I_n(000,110)$, $B_n$ &  877,B &\ref{section:877}\\

$e_j \neq e_k$, $e_i=e_k$  & A000110 &yes & $\I_n(010,101)$, $B_n$ &  877,C &\ref{section:877}\\

$e_i \geq e_j$, $e_i = e_k$  & A000110 & no &$\I_n(000,101)$, $B_n$ &  877,D &\ref{section:877}\\

$e_i > e_j$& A000984 & yes & central binomial coefficients & 924 &\ref{section:924}\\

$e_i > e_j \leq e_k$ & A071356 & no & certain underdiagonal lattice paths & 1064 &\ref{section:1064}\\

$e_i > e_j < e_k$& A033321 &  yes &$\Sn_n(2143,3142,4132)$  (see \cite{BS}) & 1265 &\ref{section:1265}\\

$e_i >e_j$, $e_i \leq e_k$ & A106228 & no & $\I_n(101,102)$, $\Sn_n(4123,4132,4213)$& 1347 &\ref{section:1347}\\

$e_i=e_j=e_k$ & A000111 & yes & $\I_n(000)$  (see \cite{paisI}), Euler up/down numbers& 1385 &\ref{section:1385} \\

$e_i >e_j$, $e_i < e_k$ & A200753 & yes &  $\I_n(102)$, \cite{Mansour} & 1694 &\ref{section:1694}\\

$e_j >e_k$, $e_i <e_k$ & A006318& yes &$\I_n(021)$ \cite{paisI,Mansour}, large Schr\"oder numbers $R_{n-1}$ & 1806,A &\ref{section:1806}\\

$e_i > e_j$, $e_i \geq e_k$ & A006318&yes  &$\I_n(210,201,101,100)$, $R_{n-1}$ & 1806,B &\ref{section:1806}\\

$e_i \geq e_j$, $e_i >e_k$  & A006318&  yes&$\I_n(210,201,100,110)$, $R_{n-1}$  & 1806,C &\ref{section:1806}\\

$e_i \geq e_j \not = e_k$, $e_i \geq e_k$ & A006318& yes& $\I_n(210,201,101,110)$, $R_{n-1}$  & 1806,D &\ref{section:1806}\\

$e_i \geq e_j \geq e_k$, $e_i > e_k$ & A001181&yes  &Baxter permutations \cite{KimLin}   & 2074 &\ref{section:2074}\\

$e_i > e_j$, $e_i > e_k$ & A098746 & no &$\I_n(210,201,100)$,  $\Sn_n(4231,42513)$ & 2549,A &\ref{section:2549}\\

$e_i > e_j \neq e_k$, $e_i \geq e_k$ & A098746 & no & $\I_n(210,201,101)$, $\Sn_n(4231,42513)$ & 2549,B &\ref{section:2549}\\

$e_i \geq e_j \neq e_k$, $e_i > e_k$ & A098746 & no &  $\I_n(210,201,110)$, $\Sn_n(4231,42513)$ & 2549,C &\ref{section:2549}\\

$e_j < e_k$, $e_i\geq e_k$& A117106  &yes  &$\I_n(201,101)$, $\Sn_n(21{\bar{3}}54)$ & 2958,A &\ref{section:2958}\\

$e_i > e_j \geq e_k$  & A117106  & yes &  $\I_n(210,100)$,  $\Sn_n(21{\bar{3}}54)$
\cite{BouvelGuerriniEtAl} & 2958,B &\ref{section:2958}\\

$e_i \geq e_j > e_k$& A117106  &yes  &  $\I_n(210,110)$,  $\Sn_n(21{\bar{3}}54)$ & 2958,C &\ref{section:2958}\\

$e_j \leq e_k$, $e_i > e_k$& A117106  &yes  &  $\I_n(201,100)$, $\Sn_n(21{\bar{3}}54)$ & 2958,D &\ref{section:2958}\\

$e_j< e_k$, $e_i = e_k$&A113227 & yes & $\I_n(101)$, $\Sn_n($1-23-4$)$, (see \cite{paisI})& 3207,A &\ref{section:3207}\\

$e_i=e_j>e_k$&A113227 & yes & $\I_n(110)$, $\Sn_n($1-23-4$)$, (see \cite{paisI})& 3207,B &\ref{section:3207}\\

$e_i > e_j \not = e_k$, $e_i > e_k$ &  A212198 & yes &$\I_n(201,210)$, $MMP(0,2,0,2)$-avoiding perms  & 3720 &\ref{section:3207}\\
\end{tabular}
}
\vspace{.1in}
\hrule
\vspace{.1in}
\caption{Patterns whose avoidance sequences  appear to match sequences in the OEIS.  Those marked as ``yes'' are cited, if known, and otherwise are proven in this paper.} 
\label{other triples}
\end{table}

\vspace{.1in}

\begin{table}
\hrule 
\centering
\resizebox{1\textwidth}{!}{
\begin{tabular}{llll}

Inversion sequences&&&  \\
with no $i < j < k$&  comments & initial terms $a_1, \ldots a_9$ &  $a_7$, \\
such that:&&& equiv class \\
\\
$e_j \geq e_k$, $e_i \geq e_k$& $\I_n(000,010,011,021)$ (A279544)&  $1,2,4,10,26,73,214,651,2040$  & 214\\
$e_i \leq e_j$, $e_i \geq e_k$& $\I_n(000,010,110,120)$ (A279551)& $1,2,4,10,27,79,247,816,2822$  & 247\\
$e_j \geq e_k$, $e_i = e_k$& $\I_n(000,010)$ (A279552)& $1,2,4,10,29,95,345,1376,5966$&345\\
$e_j \neq e_k$, $e_i \geq e_k$ & Wilf-eq. to 663B (Sec. \ref{equivalences}, A279553)& $1,2,5,15,50,178,663,2552,10071$&663,A\\
$e_i \neq e_j$, $e_i \geq e_k$&Wilf-eq. to 663A (Sec. \ref{equivalences}, A279553)&$1,2,5,15,50,178,663,2552,10071$ &663,B\\
$e_i \neq e_j \neq e_k$, $e_i \geq e_k$& $\I_n(010,101,120,201)$ (A279554)& $1,2,5,15,51,188,733,2979,12495$&733\\
$e_j >e_k$, $e_i \geq e_k$& Wilf-eq. to 746B (Sec. \ref{equivalences}, A279555) &$1,2,5,15,51,189,746,3091,13311$ &746,A\\
$e_i \neq e_j\geq e_k $, $e_i \geq e_k$&Wilf-eq. to 746A (Sec. \ref{equivalences}, A279555)  & $1,2,5,15,51,189,746,3091,13311$&746,B\\
$e_i \leq e_j\neq e_k$, $e_i \geq e_k$ & $\I_n(010,110,120)$ (A279556)&$1,2,5,15,51,190,759,3206,14180$ &759\\
$e_i \leq e_j > e_k$, $e_i \not= e_k$ & counted - See Section \ref{counting} (A279557) & $1,2,6,20,68,233,805,2807,9879$ & 805\\
$e_i \neq e_j>e_k$, $e_i \geq e_k$&$\I_n(010,120,210)$ (A279558)&$1,2,5,15,52,200,830,3654,16869$ &830\\
$e_i < e_j $, $e_i \geq e_k$ & $\I_n(010,120)$  (A279559)& $1,2,5,15,52,201,845,3801,18089$ & 845\\
$e_j >e_k$, $e_i =e_k$&  $\I_n(010)$ (A263779)& $1,2,5,15,53,215,979,4922,26992$&979\\
$e_i >e_j$, $e_i \neq e_k$& counted - See Section \ref{counting}  (A279560)& $1,2,6,21,76,277,1016,3756, 13998$&1016\\
$e_i > e_j \not = e_k$ &  counted - See Section \ref{counting}   (A279561)&$1,2,6,21,77,287,1079,4082,15522$ & 1079,A\\
$e_i <e _j >e_k$, $e_i \neq e_k$  &  $\I_n(021,120)$ (A279561) &$1,2,6,21,77,287,1079,4082,15522$&1079,B\\
$e_i > e_j \leq e_k$, $e_i \not = e_k$ & $\I_n(100,102,201)$ (A279562)&$1,2,6,21,78,299,1176,4729,19378$ & 1176\\
$e_i > e_j \not = e_k$, $e_i \not = e_k$ & $\I_n(102, 201, 210)$ (A279563)& $1,2,6,22,85,328,1253,4754, 17994$ & 1253\\
$e_i \geq e_j = e_k$ & $\I_n(000,100)$ (A279564)& $1,2,5,16,60,260,1267,6850,40572$ & 1267\\
$e_i >e_k$ &$\I_n(100,110,120,210,201)$ (A279565)&$1,2,6,21,81,332,1420,6266,28318$ &1420\\
$e_i > e_j < e_k$, $e_i \not = e_k$ & $\I_n(102,201)$ (A279566)& $1,2,6,22,87,354,1465,6154,26223$ & 1465\\
$e_j \geq e_k$, $e_i>e_k$&$\I_n(100,110,120,210)$ (A279567)& $1,2,6,21,82,343,1509,6893,32419$&1509\\
$e_j \neq e_k$, $e_i >e_k$&Wilf-eq. to 1833B (Sec. \ref{equivalences}, A279568) &$1,2,6,22,90,396,1833,8801,43441$ &1833,A\\
$e_i \neq e_j$ , $e_i > e_k$&Wilf-eq. to 1833A (Sec. \ref{equivalences}, A279568) &$1,2,6,22,90,396,1833,8801,43441$ &1833,B\\
$e_j >e_k$, $e_i > e_k$ & Wilf-eq. to 1953B (Sec. \ref{equivalences}, A279569)&$1,2,6,22,91,409,1953,9763,50583$ &1953,A\\
$e_i \neq e_j \geq e_k$, $e_i >e_k$&Wilf-eq. to 1953A (Sec. \ref{equivalences}, A279569) & $1,2,6,22,91,409,1953,9763,50583$&1953,B\\
$e_i \leq e_j > e_k$, $e_i > e_k$ & $\I_n(110,120)$ (A279570)& $1,2,6,22,92,423,2091,10950,60120$ & 2091\\
$e_i > e_j \leq e_k$, $e_i \geq e_k$& $\I_n(100,101,201)$ (A279571)&$1,2,6,22,92,424,2106,11102,61436$ &2106\\
$e_i \neq e_j \neq e_k$, $e_i > e_k$&$\I_n(120,210,201)$ (A279572)& $1,2,6,23,101,484,2468,13166,72630$&2468\\
$e_i \not = e_j > e_k$, $e_i > e_k$ & $\I_n(210,120)$ (A279573)& $1,2,6,23,102,499,2625,14601,84847$ & 2625\\
$e_i < e_j$, $e_i > e_k$&$\I_n(120)$ (A263778) &$1,2,6,23,103,515,2803,16334,100700$ &2803\\
$e_i > e_j = e_k$&$\I_n(100)$  (A263780) & $1,2,6,23,106,565,3399,22678,165646$&3399\\
$e_j < e_k$, $e_i > e_k$& $\I_n(201)$ (A263777)& $1,2,6,24,118,674,4306,29990,223668$&4306,A\\
$e_i > e_j > e_k$ & $\I_n(210)$  Wilf-eq to 4306A \cite{paisI}& $1,2,6,24,118,674,4306,29990,223668$&4306,B\\

\end{tabular}
}
\vspace{.1in}
\hrule
\vspace{.1in}

\caption{The patterns whose avoidance sequences did not match sequences in the OEIS.  (OEIS numbers in parentheses were newly assigned.)}
\label{questions}
\end{table}

\section{Patterns whose sequences appear in the OEIS}

\subsection{Classes 7(A,B,C,D): $n$}
\label{section:7}

There are four equivalence classes of patterns whose avoidance sequences are counted by the positive integers.  We characterize each, from which it is straightforward to prove that
$$
|\I_n(-,<,-)|=
|\I_n(-,\geq,-)|=
|\I_n(=,-,-)|=
|\I_n(\neq,-,-)|=n,
$$
although these four patterns are not equivalent. The conditions on the entries of inversion sequences in each class are as follows.

\begin{itemize}
\item (Class {\bf 7A}:   $e_j < e_k$) $\I_n(-,<,-)$ is the set of $e \in \I_n$ satisfying $e_2 \geq e_3 \geq \ldots \geq e_n$.
\item (Class {\bf 7B}: $e_j \geq e_k$) $\I_n(-,\geq,-)$ is the set of $e \in \I_n$ satisfying $e_1 \leq e_2  < e_3 <  \ldots < e_n$.
\item (Class {\bf 7C}: $e_i = e_j$) $\I_n(=,-,-)$ is the set of $e \in \I_n$ satisfying $e_1 < e_2   <  \ldots < e_{n-1}$.
\item (Class {\bf 7D}: $e_i \neq e_j$) $\I_n(\neq,-,-)$ is the set of $e \in \I_n$ satisfying $e_1=e_2   =  \ldots =e_{n-1}$.
\end{itemize}

{\bf Note:} Simion and Schmidt \cite{SimionSchmidt} showed that $|\Sn_n(123,132,231)|=n$. In fact, we get the following relationship between inversion sequences and permutations.

\begin{theorem}
For every $n$, $\Theta(\Sn_n(123,132,231))=\I_n(=,-,-)$.
\end{theorem}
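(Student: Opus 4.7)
The plan is to first give an explicit description of $\Sn_n(123,132,231)$, then apply $\Theta$ coordinatewise and match the result against $\I_n(=,-,-)$.

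First I would show that $\Sn_n(123,132,231)$ is exactly the set $\{\pi^{(k)} : k \in [n]\}$, where
\[
\pi^{(k)} = (n,n-1,\ldots,k+1,\, k-1,k-2,\ldots,1,\, k).
\]
The key observation is that if $i<j$ with $\pi_i<\pi_j$, then for any $k>j$ the value $\pi_k$ cannot satisfy $\pi_k<\pi_i$ (that would give a 231), cannot satisfy $\pi_i<\pi_k<\pi_j$ (that would give a 132), and cannot satisfy $\pi_k>\pi_j$ (that, together with $\pi_i<\pi_j$, would give a 123). Hence no such $k$ exists, so $j=n$. Thus a permutation in $\Sn_n(123,132,231)$ has at most one ascent, and any ascent must occur at position $n-1$. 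The first $n-1$ entries are therefore strictly decreasing, and the last entry $k=\pi_n$ can be any element of $[n]$ (when $k=1$ the permutation is the all-decreasing one, consistent with the formula).

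Second, I would directly compute $\Theta(\pi^{(k)})$. Since $\pi^{(k)}_1,\ldots,\pi^{(k)}_{n-1}$ is strictly decreasing, every $\pi^{(k)}_j$ with $j<i\le n-1$ exceeds $\pi^{(k)}_i$, so $e_i=i-1$ for each $i\le n-1$. For the last coordinate, the number of $j<n$ with $\pi^{(k)}_j>k$ equals the number of elements of $\{1,\ldots,n\}\setminus\{k\}$ that exceed $k$, namely $n-k$. Thus
\[
\Theta(\pi^{(k)}) = (0,1,2,\ldots,n-2,\, n-k).
\]

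Third, I would identify $\I_n(=,-,-)$ explicitly. If $e\in\I_n$ satisfies $e_1<e_2<\cdots<e_{n-1}$, then a simple induction using $0\le e_i\le i-1$ forces $e_i=i-1$ for $i=1,\ldots,n-1$, while $e_n$ is unconstrained in $\{0,1,\ldots,n-1\}$. Hence $\I_n(=,-,-)=\{(0,1,\ldots,n-2,t):0\le t\le n-1\}$. Since $n-k$ ranges over $\{0,1,\ldots,n-1\}$ as $k$ ranges over $[n]$, the image $\Theta(\Sn_n(123,132,231))$ coincides with $\I_n(=,-,-)$, as claimed.

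The only step that requires any real argument is the characterization of $\Sn_n(123,132,231)$ in step one, and even there the analysis of a single putative ascent collapses in just three lines; the remaining steps are direct computations.
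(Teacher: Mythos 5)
Your proof is correct. Note that the paper does not actually supply a proof of this theorem: it simply records the characterization of $\I_n(=,-,-)$ as the sequences with $e_1<e_2<\cdots<e_{n-1}$ and states the correspondence with $\Sn_n(123,132,231)$ as a ``Note,'' so your argument fills in exactly the details the authors leave implicit, and it does so by the natural route (explicitly listing both sides and matching them under $\Theta$). All three computational steps check out: the three-case analysis correctly shows that any non-inversion $\pi_i<\pi_j$ forces $j=n$, the count $e_n=n-k$ is right, and the bound $e_i\le i-1$ together with distinctness of $e_1,\ldots,e_{n-1}$ does force $e_i=i-1$. The only point you assert without checking is the reverse containment $\{\pi^{(k)}\}\subseteq\Sn_n(123,132,231)$, which you need for the claimed set equality (otherwise the image could a priori be a proper subset of $\I_n(=,-,-)$); it follows in one line from the same case analysis, since every occurrence of $123$, $132$, or $231$ begins with an ascending pair in positions $<n$, which $\pi^{(k)}$ does not have. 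Adding that sentence would make the argument complete.
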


\subsection{Classes 12(A,B):  $2(n-1)$ for $n>1$}
\label{section:12}

In the following theorem, we show that classes {\bf 12A} ($e_i \neq e_j$ and $e_i \neq e_k$) and {\bf 12B} ($e_i \geq e_j$ and $e_i \neq e_k$) are Wilf-equivalent, but not equivalent. 
\begin{theorem}
$|\I_n(\neq, -, \neq)|$ and $|\I_n(\geq, -, \neq)|$ are both counted by $1$ if $n=1$ and by $2(n-1)$ for $n>1$.
However, $\I_n(\neq, -, \neq) \neq \I_n(\geq, -, \neq)$ for $n > 2$.
\end{theorem}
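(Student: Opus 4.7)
The plan is to enumerate each set by structural analysis, then exhibit an inversion sequence lying in the symmetric difference.

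For the first class, the avoidance condition is: for every $i < j < k$, $e_i = e_j$ or $e_i = e_k$. Applying this with $i = 1$ (so $e_1 = 0$) forces at most one of $e_2, \ldots, e_n$ to be nonzero, since two nonzero entries at positions $j < k$ would violate the triple $(1,j,k)$. If a nonzero entry appeared at some position $p \in \{2, \ldots, n-2\}$, then the triple $(p, p+1, p+2)$ would be forbidden, so the nonzero entry (if any) must occupy position $n-1$ or $n$. A direct check confirms all these candidate sequences avoid the pattern, yielding $1 + (n-2) + (n-1) = 2(n-1)$ sequences for $n \geq 2$ (and $1$ for $n = 1$).

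For the second class, the avoidance condition rewrites as: for every $i < j < k$, $e_i < e_j$ or $e_i = e_k$. Let $j^*$ be the smallest $j$ for which some $i < j$ satisfies $e_i \geq e_j$, if it exists. If no such $j^*$ exists, $e$ is strictly increasing, which for inversion sequences forces $e = (0, 1, \ldots, n-1)$. Otherwise, by minimality $e_t = t - 1$ for $t < j^*$. If $j^* = n$, the condition imposes no further constraint (no $k > n$), and $e_n$ can be any value in $\{0, 1, \ldots, n-2\}$, giving $n-1$ sequences. If $j^* < n$, then for every $k > j^*$ and every $i < j^*$ with $e_i \geq e_{j^*}$ the condition forces $e_k = e_i$; the $e_i$-values at such indices are precisely $e_{j^*}, e_{j^*}+1, \ldots, j^*-2$, so consistency forces $e_{j^*} = j^*-2$, and then $e_{j^*} = e_{j^*+1} = \cdots = e_n = j^*-2$. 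Each $j^* \in \{2, \ldots, n-1\}$ thus contributes exactly one sequence, and a routine check verifies avoidance. The grand total is again $1 + (n-2) + (n-1) = 2(n-1)$.

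For $n \geq 3$, the sequence $e = (0, 0, \ldots, 0, 1)$ with $e_n = 1$ lies in $\I_n(\neq,-,\neq)$ since every triple $(i,j,k)$ either has $k < n$ (so $e_i = e_k = 0$) or $k = n$ (so $e_i = e_j = 0$); but the triple $(1, 2, n)$ satisfies $e_1 \geq e_2$ and $e_1 \neq e_n$, so $e \notin \I_n(\geq,-,\neq)$. This exhibits an element of the symmetric difference, proving the two sets differ for $n > 2$.

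The main obstacle is the consistency argument in the second class: one must recognize that when several indices $i < j^*$ simultaneously satisfy $e_i \geq e_{j^*}$, each independently imposes $e_k = e_i$ for all $k > j^*$, forcing the corresponding $e_i$-values (which form an arithmetic progression of consecutive integers) to collapse to a single value, which in turn pins down $e_{j^*}$. Once this collapse is observed, the remaining enumeration and the distinctness example are straightforward.
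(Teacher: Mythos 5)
Your proposal is correct and follows essentially the same route as the paper: both arguments characterize $\I_n(\neq,-,\neq)$ as the sequences with at most one nonzero entry, located in position $n-1$ or $n$, and $\I_n(\geq,-,\neq)$ as the sequences of the form $(0,1,\ldots,n-2,t)$ or $(0,1,\ldots,t-1,t,\ldots,t)$, then count each family. You simply supply the minimality/consistency details that the paper leaves implicit, and you make the inequality of the two sets explicit with the witness $(0,\ldots,0,1)$.
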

\begin{proof}

For $n=1$ this is clear.   For $n > 1$
this follows by noting that any $e\in \I_n$ with no $i < j < k$ such that $e_i \neq e_j$ and $e_i \neq e_k$ can be $(0,0,\ldots,0)$ or can be of the form $(0,0,\ldots, t, 0)$ or $(0,0,\ldots, 0,s)$ where $t \in [n-2]$ and $s \in [n-1]$.
On the other hand,
any $e\in \I_n$ with no $i < j < k$ such that $e_i \geq e_j$ and $e_i \neq e_k$ must
have the form $(0,1,2, \ldots, n-2,t)$ for $t=0, \ldots, n-1$ or the form
$(0,1,2, \ldots, t-1,t,t, \ldots, t)$ for $t=0, \ldots, n-3$.
\end{proof}

\subsection{Class 21: $F_{n+1}$}
\label{section:21}

Let $F_n$ be the $n$-th Fibonacci number, where $F_0=0$, $F_1=1$, and $F_n=F_{n-1}+F_{n-2}$ for $n \geq 2$.  The Fibonacci numbers count pattern-avoiding permutations such as  $\Sn_n(123,132,213)$ \cite{SimionSchmidt}. Class {\bf 21} ($e_i = e_j \leq e_k$) is counted by the $(n+1)$-th Fibonacci number, as shown below.

\begin{observation}
The inversion sequences $e \in \I_n$  with no $i<j<k$ such that  $e_i   = e_j  \le e_k$ are those satisfying, 
for some $t \in [n]$,
\begin{equation}
 e_1<e_2< \ldots < e_{t} \geq e_{t+1}> \ldots > e_n.
\label{equation:21}
\end{equation}
\end{observation}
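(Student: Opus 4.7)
My plan is to prove both directions of the equivalence, taking $t$ in (\ref{equation:21}) to be the first position attaining the maximum, $t = \min\{i : e_i = \max_j e_j\}$. A bookkeeping fact I use repeatedly: if $e_1 < e_2 < \cdots < e_i$ is a strict initial chain, then the inversion-sequence bound $e_j < j$ forces $e_j = j-1$ for every $j \leq i$, so each of the values $0, 1, \ldots, i-1$ appears exactly once in $e_1, \ldots, e_i$.

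For the $(\Leftarrow)$ direction, assume $e$ satisfies (\ref{equation:21}) for some $t$ and take any $i<j<k$. If $j \leq t$, the strict ascent on $[1,t]$ yields $e_i < e_j$, so $e_i = e_j$ fails. If $j \geq t+1$, then $k > j \geq t+1$ and the strict descent on $[t+1,n]$ yields $e_j > e_k$, so $e_j \leq e_k$ fails. In either case, the forbidden pattern is avoided.

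For the $(\Rightarrow)$ direction, I establish the ascent $e_1 < \cdots < e_t$ and the descent $e_{t+1} > \cdots > e_n$ separately (the middle inequality $e_t \geq e_{t+1}$ is automatic since $e_t = M := \max_j e_j$). For the ascent, take the least $i < t$ with $e_i \geq e_{i+1}$; by minimality and the bookkeeping fact, $e_j = j-1$ for $j \leq i$, hence $e_i = i-1 < M$. If $e_i = e_{i+1}$, then $i+1 < t$ (else $e_{i+1} = M$ contradicts minimality of $t$), and the triple $(i, i+1, t)$ realizes the forbidden pattern $e_i = e_{i+1} \leq e_t = M$. If $e_i > e_{i+1}$, then again $i+1 < t$ (else $e_{i+1} = M > e_i$, contradicting $e_i > e_{i+1}$), and $e_{i+1} \leq i-2$; the bookkeeping fact then places this value at the earlier position $i' = e_{i+1} + 1 \leq i-1$, so the triple $(i', i+1, t)$ is forbidden. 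For the descent, take the least $s$ with $t+1 \leq s < n$ and $e_s \leq e_{s+1}$; since $M = t-1$ (by the ascent) and $e_s < M$ (by minimality of $t$), we have $e_s \leq t-2$, so the bookkeeping fact places $e_s$ at position $i' = e_s + 1 \leq t-1 < s$, and the triple $(i', s, s+1)$ is forbidden.

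The main obstacle is the $e_i > e_{i+1}$ subcase of the ascent argument: the forbidden pattern is not directly visible among the three positions $(i, i+1, t)$ themselves, since no equality between $e_i$ and $e_{i+1}$ is available. The bookkeeping fact is exactly what resolves this: a strict initial ascent is rigid enough to force $e_j = j-1$, so any strictly smaller later entry must repeat an earlier value, and pairing that repeated value with the global maximum at position $t$ produces the forbidden triple in every nontrivial case.
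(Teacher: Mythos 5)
The paper states this as an unproved Observation, so there is no in-text argument to compare against; your proposal supplies the missing proof, and its overall architecture (both directions, with $t$ the first position of the maximum and the rigidity fact $e_j=j-1$ along a strict initial ascent) is sound. The $(\Leftarrow)$ direction and the ascent half of $(\Rightarrow)$ are correct, including the delicate $e_i>e_{i+1}$ subcase where you relocate the repeated value to position $e_{i+1}+1$.

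There is one misstep in the descent half: the claim ``$e_s < M$ (by minimality of $t$).'' Minimality of $t$ only tells you that positions \emph{before} $t$ carry values below $M$; it says nothing about $s>t$, and indeed $e_s=M$ is possible there (e.g., $e=(0,1,1,0)$ has $e_3=M$ with $t=2$, and in the configuration you are refuting, $e=(0,1,1,1)$ has $e_s=M$ at $s=3$). Consequently the intermediate inequalities $e_s\leq t-2$ and $i'\leq t-1$ can fail. The good news is that nothing so strong is needed: you always have $e_s\leq M=t-1$, hence $i'=e_s+1\leq t\leq s-1<s$, and $e_{i'}=i'-1=e_s\leq e_{s+1}$ still exhibits the forbidden triple $(i',s,s+1)$ (when $e_s=M$ this triple is just $(t,s,s+1)$). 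So the exhibited witness is correct even though its stated justification is not; replacing ``$e_s<M$ by minimality of $t$'' with ``$e_s\leq M$, so $e_s+1\leq t<s$'' repairs the argument completely.
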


\begin{theorem}
$|\I_n(=,\leq, -)|=F_{n+1}$
\end{theorem}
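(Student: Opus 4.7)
The plan is to combine the structural characterization from the preceding observation with a direct enumeration that reduces to a classical Fibonacci identity.

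First I would argue that the strictly increasing prefix $e_1 < e_2 < \cdots < e_t$ is essentially forced: since $e_1 = 0$ and $e_i < i$, a strict ascent along the prefix forces $e_i = i-1$ for every $i \le t$. In particular $e_t = t-1$, and the peak position $t$ is uniquely recoverable from $e$ --- if $e_t = e_{t+1}$ then shifting $t$ by $\pm 1$ violates either the strict-ascent or strict-descent clause of the observation, so there is no double counting.

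Next I would count, for each fixed $t \in [n]$, the number of admissible tails $(e_{t+1}, \ldots, e_n)$. The observation demands that these entries form a strictly decreasing sequence bounded above by $e_t = t-1$; once this bound is imposed, the inversion-sequence constraints $e_j < j$ for $j > t$ are automatic. Thus admissible tails are in bijection with $(n-t)$-element subsets of $\{0, 1, \ldots, t-1\}$, giving $\binom{t}{n-t}$ choices. Summing over $t$ and substituting $k = n - t$,
\[
|\I_n(=,\le,-)| \;=\; \sum_{t=1}^{n} \binom{t}{n-t} \;=\; \sum_{k=0}^{\lfloor n/2 \rfloor} \binom{n-k}{k}.
\]

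Finally I would invoke the classical identity $\sum_{k \ge 0}\binom{n-k}{k} = F_{n+1}$, which has a one-line inductive proof via Pascal's rule and the Fibonacci recurrence $F_{n+1} = F_n + F_{n-1}$. This completes the argument. I do not anticipate a real obstacle: the only subtle point is the uniqueness of $t$ in the characterization, which is why I flag it in the first step; once that is settled, the enumeration and the Fibonacci identity are routine.
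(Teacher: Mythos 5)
Your proof is correct, but it takes a genuinely different route from the paper's. The paper proves the Fibonacci recurrence $a_n=a_{n-1}+a_{n-2}$ directly: for $n\ge 3$, every $e\in\I_n(=,\leq,-)$ arises either as $(0,e_1+1,\ldots,e_{n-1}+1)$ from some element of $\I_{n-1}(=,\leq,-)$ or as $(0,e_1+1,\ldots,e_{n-2}+1,0)$ from some element of $\I_{n-2}(=,\leq,-)$, the case split being whether $e$ contains a second zero; no closed-form identity is needed. You instead fix the peak position $t$ in the characterization \eqref{equation:21}, note that the strict ascent forces the prefix to be $(0,1,\ldots,t-1)$, count the admissible tails as $(n-t)$-element subsets of $\{0,\ldots,t-1\}$, and reduce to the diagonal-sum identity $\sum_{k}\binom{n-k}{k}=F_{n+1}$. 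All of your steps check out: the inversion-sequence bounds on the tail are indeed automatic once $e_{t+1}\le e_t=t-1$, and your uniqueness-of-$t$ concern is real but correctly dispatched (if $t<t'$ both satisfied the characterization one would need both $e_t\ge e_{t+1}$ and $e_t<e_{t+1}$, a contradiction). What your version buys is a refinement the recurrence does not expose: the number of sequences in the class with peak at position $t$ (equivalently with maximum entry $t-1$) is exactly $\binom{t}{n-t}$. What the paper's version buys is brevity and independence from the binomial identity, since the decomposition mirrors the Fibonacci recurrence itself.
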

\begin{proof}
This is clear for $n=1,2$. For $n \geq 3$, any $e \in \I_n(=, \leq, -)$ must have the form
$(0, e_1+1, \ldots, e_{n-1}+1)$ for $(e_1, \ldots, e_{n-1}) \in \I_{n-1}(=, \leq, -)$ or
$(0, e_1+1, \ldots, e_{n-2}+1,0)$ for $(e_1, \ldots, e_{n-2}) \in \I_{n-2}(=, \leq, -)$.  Conversely, strings of either of these forms
are in $\I_n(=, \leq, -)$.
\end{proof}

Among the 343 patterns checked, it can be shown that the six patterns whose avoidance sequence is counted by $F_{n+1}$  are equivalent.
\begin{observation}
All of the following patterns are equivalent to $(=, \leq, -)$:
$(=,-,\leq)$,
$(=,\leq,\leq)$,
$(\geq,-,\leq)$,
$(\geq,\leq,-)$,
$(\geq,\leq,\geq)$.
\end{observation}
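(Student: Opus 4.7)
The plan is to prove $\I_n(=, \leq, -) = \I_n(P)$ for each of the five other listed patterns $P$, leveraging the structural characterization from the preceding observation: $e \in \I_n(=, \leq, -)$ if and only if $e_1 < e_2 < \cdots < e_t \geq e_{t+1} > \cdots > e_n$ for some $t \in [n]$. For each $P$, I would establish the two inclusions separately.

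The inclusion $\I_n(=, \leq, -) \subseteq \I_n(P)$ admits a uniform treatment. Take $e$ with the unimodal structure and suppose for contradiction that some triple $i < j < k$ realizes $P$. A case analysis on the position of $j$ relative to $t$ yields a contradiction in every instance: if $j \leq t$, the strictly increasing prefix gives $e_i < e_j$, negating any first relation in $\{=, \geq\}$; if $j \geq t+1$ with $k > j$ in the decreasing tail, strictness forces $e_j > e_k$, negating the second relation $\leq$; and in the only remaining boundary subcase $j = t$, $k = t+1$ with $e_t = e_{t+1}$, the inequality $e_i < e_t = e_j$ for $i < t$ again blocks the first relation. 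The same three-case argument applies uniformly to all five patterns.

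For the reverse inclusion $\I_n(P) \subseteq \I_n(=, \leq, -)$, the approach is to show that every occurrence of $(=, \leq, -)$ simultaneously witnesses $P$. When indices $(i, j, k)$ form such an occurrence, $e_i = e_j \leq e_k$, so $e_i \geq e_j$, $e_j \leq e_k$, and $e_i \leq e_k$ all hold at that triple. This instantly gives the inclusion for the four patterns $(=, -, \leq)$, $(=, \leq, \leq)$, $(\geq, -, \leq)$, and $(\geq, \leq, -)$.

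The main obstacle is the reverse inclusion for $(\geq, \leq, \geq)$, whose extra requirement $e_i \geq e_k$ does not follow from $e_i = e_j \leq e_k$ unless $e_j = e_k$. To close this case, I would argue that any inversion sequence which is not unimodal must contain some (possibly different) triple realizing $(\geq, \leq, \geq)$. The plan is to locate the earliest place where the unimodal shape breaks---either the first ascent following a descent, or the first repeated value $e_i = e_j$ followed later by a value $e_k \leq e_i$---and use that configuration to exhibit the required triple. This subtle structural step is the only non-uniform part of the argument; everything else reduces to the clean case analysis described above.
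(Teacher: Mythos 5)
Your structure is sound for most of the list: the characterization $e_1 < \cdots < e_t \geq e_{t+1} > \cdots > e_n$ does give the inclusion $\I_n(=,\leq,-) \subseteq \I_n(P)$ for all five patterns (one small repair: for $(=,-,\leq)$ and $(\geq,-,\leq)$ the second relation is ``$-$'', so in the tail case $j \geq t+1$ you cannot ``negate the second relation $\leq$''; instead combine $e_j > e_k$ with the first relation to get $e_i \geq e_j > e_k$, contradicting the third relation $e_i \leq e_k$). Likewise your uniform reverse inclusion --- every occurrence of $(=,\leq,-)$ is an occurrence of $P$ --- disposes of $(=,-,\leq)$, $(=,\leq,\leq)$, $(\geq,-,\leq)$, and $(\geq,\leq,-)$. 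The paper gives no proof of this observation, so there is nothing to compare against on those counts.

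The step you defer for $(\geq,\leq,\geq)$, however, is not a subtle structural lemma still to be found: it is false. Take $e = (0,0,1) \in \I_3$. It is not unimodal in the required sense (indeed it contains $(=,\leq,-)$ at $(i,j,k)=(1,2,3)$, since $e_1 = e_2 \leq e_3$), yet it contains no occurrence of $(\geq,\leq,\geq)$: such an occurrence requires $e_j \leq e_k \leq e_i$, and here $e_3 = 1 > 0 = e_1$. Hence $\I_3(\geq,\leq,\geq) \supsetneq \I_3(=,\leq,-)$, and no argument can close this inclusion. In fact, since $e_j \leq e_k \leq e_i$ already forces $e_i \geq e_j$, the pattern $(\geq,\leq,\geq)$ is equivalent to $(-,\leq,\geq)$, which is the paper's class 772A (counting sequence $1,2,5,15,51,\ldots$, A108307), not the Fibonacci class 21. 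The statement as printed is in error: the intended sixth pattern is evidently $(\geq,\leq,\leq)$, completing the family with first relation in $\{=,\geq\}$ and second/third relations ranging over $(\leq,-)$, $(-,\leq)$, $(\leq,\leq)$. For $(\geq,\leq,\leq)$ your uniform observation --- $e_i = e_j \leq e_k$ implies $e_i \geq e_j$, $e_j \leq e_k$, and $e_i \leq e_k$ --- closes the reverse inclusion immediately, with no extra work.
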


\subsection{Classes 22(A,B,C):  Lazy caterer sequence, ${n \choose 2} +1$}
\label{section:22}

We show that there are three inequivalent patterns that are all counted by the sequence ${n \choose 2} +1$, which also counts $\Sn_n(132,321)$ \cite{SimionSchmidt}.

\subsubsection{Class {\bf 22A}: Avoiding $e_i < e_j \neq e_k$}

It is not hard to see that inversion sequences avoiding $e_i < e_j  \ne e_k$ are characterized by the following.
\begin{observation} The inversion sequences $e \in \I_n$  with no $i<j<k$ such that  $e_i < e_j  \ne e_k$ are those satisfying, 
for some $t$ where $1 \leq t \leq n$,
\begin{equation}
0 = e_1=e_2= \ldots = e_{t-1} \leq e_t = e_{t+1} = \ldots = e_n.
\label{equation:22A}
\end{equation}
\end{observation}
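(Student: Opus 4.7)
The plan is to prove the biconditional by establishing both directions of the characterization, with the nontrivial content in the forward direction (i.e., pattern-avoidance implies the prescribed form).

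For the reverse direction (form implies avoidance), I would argue directly. Suppose $e$ has the prescribed shape, so its entries take at most two distinct values: $0$ on the initial block $e_1, \ldots, e_{t-1}$, and some fixed value $v := e_t$ on the terminal block $e_t, \ldots, e_n$. If $i < j < k$ satisfied $e_i < e_j$, then $e_i$ must sit in the zero block and $e_j$ in the terminal block, forcing $j \geq t$ and hence $k > j \geq t$, so $e_k = v = e_j$. This contradicts $e_j \neq e_k$, ruling out any forbidden triple.

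For the forward direction, suppose $e \in \I_n$ has no $i < j < k$ with $e_i < e_j \neq e_k$. If $e$ is the all-zero sequence, take $t = 1$ and we are done. Otherwise, since $e_1 = 0$ always holds (as $0 \leq e_1 < 1$), there exists a smallest index $t$ with $e_t > 0$, and this $t$ satisfies $t \geq 2$; by minimality, $e_1 = e_2 = \cdots = e_{t-1} = 0 < e_t$. It remains to prove $e_t = e_{t+1} = \cdots = e_n$. Suppose for contradiction that some $k > t$ has $e_k \neq e_t$. Choosing $i = t - 1$ (available because $t \geq 2$) and $j = t$, we get $e_i = 0 < e_t = e_j$ together with $e_j \neq e_k$, producing a forbidden triple and yielding the contradiction.

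The main ``obstacle'' is really just recognizing that the inversion-sequence constraint $e_1 = 0$ guarantees the existence of a valid index $i = t-1$ to the left of the first positive entry; without this one would worry about the boundary case $t = 1$, but it cannot arise for a nonzero inversion sequence. Beyond that, the argument is a short case split, so I would present it as a brief direct proof rather than invoking any induction or auxiliary construction.
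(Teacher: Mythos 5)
Your proof is correct and complete: both directions are handled properly, and the key point in the forward direction --- that $e_1=0$ guarantees the index $i=t-1$ exists to the left of the first positive entry --- is exactly the detail that makes the argument work. The paper states this only as an unproved observation (``it is not hard to see''), and your direct two-way verification is precisely the intended argument.
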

That is, either $e=(0,0,\ldots,0)$ or, for some $t: 2 \leq t \leq n$ and $j: 1 \leq j \leq t-1$, $e$ consists of a string of $t-1$ zeros followed by a string of $n-t+1$ copies of $j$.  This gives the following.

\begin{theorem}
$|\I_n(<,\neq,-)|= \binom{n}{2} + 1$.
\end{theorem}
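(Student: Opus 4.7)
The plan is to deduce the enumeration directly from the structural characterization in the preceding Observation, so essentially no new combinatorial ideas are needed beyond careful bookkeeping.

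First I would invoke the Observation, which tells us that every $e \in \I_n(<,\neq,-)$ has the shape
\[
0 = e_1 = \cdots = e_{t-1} \leq e_t = \cdots = e_n
\]
for some $t$ with $1 \leq t \leq n$. To count these without double-counting, I would split on whether $e$ is the all-zero sequence or not.

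Next I would treat the non-all-zero case by indexing each such sequence by the unique pair $(t, j)$, where $t$ is the smallest position with $e_t \neq 0$ and $j = e_t$. The Observation forces $e_t = e_{t+1} = \cdots = e_n = j$, while the inversion sequence condition $e_t < t$ forces $1 \leq j \leq t-1$; also $t \geq 2$ since $e_1 = 0$. Conversely, any pair $(t,j)$ with $2 \leq t \leq n$ and $1 \leq j \leq t-1$ produces a valid member of $\I_n(<,\neq,-)$ via $(0,\ldots,0,j,\ldots,j)$ with $t-1$ leading zeros. The number of such pairs is
\[
\sum_{t=2}^{n} (t-1) = \binom{n}{2}.
\]

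Finally I would add the single all-zero sequence to obtain $\binom{n}{2}+1$, completing the proof. The only potential pitfall is the double-counting issue in the Observation's characterization (the all-zero sequence is described by every choice of $t$, and any sequence with $e_t = 0$ is described by multiple $t$'s), which is why I choose the canonical representative "$t$ = first nonzero position" rather than summing over all admissible $t$. Otherwise the argument is routine.
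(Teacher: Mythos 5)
Your proof is correct and follows essentially the same route as the paper: the paper likewise derives the count from the Observation by noting that $e$ is either all zeros or is determined by a pair $(t,j)$ with $2 \leq t \leq n$ and $1 \leq j \leq t-1$, giving $\binom{n}{2}+1$. Your explicit handling of the double-counting issue (choosing $t$ to be the first nonzero position) is a careful touch that the paper leaves implicit.
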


The sequence whose $n$th entry is $\binom{n}{2} + 1$ is sequence A000124 in the OEIS, where it is called the {\em Lazy Caterer} sequence \cite{Sloane} because its $n$th entry is  the maximum number of pieces that can be formed by making $n-1$ straight cuts in a pizza.  This is also the avoidance sequence for certain pairs of permutation patterns, as was shown by Simion and Schmidt \cite{SimionSchmidt}.

\begin{theorem}[Simion-Schmidt \cite{SimionSchmidt}]
$|\Sn_n(\alpha, \beta)|=\binom{n}{2} + 1$
for any of the following pairs  $(\alpha, \beta)$ of patterns:
\[
(132,321), \ (123,231), \ (123,312), \ (213,321).
\]
\label{SimionSchmidtCaterer}
\end{theorem}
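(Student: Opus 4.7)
The plan is to first use the standard symmetries on permutations to collapse the four pairs to a single case, and then to treat that case by induction on $n$ via a direct structural analysis.

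For the symmetry reduction, I appeal to the reverse map $\pi \mapsto \pi^R$ and the complement map $\pi \mapsto \pi^C$ introduced in Section 1.1; both are bijections on $\Sn_n$ and transform a pattern $\alpha$ into $\alpha^R$ and $\alpha^C$ respectively, so they induce bijections between the corresponding avoidance sets. A direct check gives $132^R = 231$, $132^C = 312$, $132^{RC} = 213$, and $321^R = 321^C = 123$. Applying these three involutions to the set $\{132, 321\}$ yields $\{123, 231\}$, $\{123, 312\}$, and $\{213, 321\}$, which together with $\{132, 321\}$ are exactly the four pairs listed in the theorem. Hence $|\Sn_n(\alpha, \beta)|$ is the same for all four pairs, and it suffices to establish the formula for $a_n := |\Sn_n(132, 321)|$.

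For $\Sn_n(132, 321)$ I would split on the position $k$ of the maximum value, $\pi_k = n$. Since $\pi$ avoids 132 and $n$ is the maximum, every entry before $n$ must exceed every entry after $n$ (otherwise a smaller prefix entry, $n$, and a larger suffix entry would form a 132). Consequently the prefix $\pi_1 \cdots \pi_{k-1}$ and suffix $\pi_{k+1} \cdots \pi_n$ are permutations of $\{n-k+1,\ldots,n-1\}$ and $\{1,\ldots,n-k\}$, respectively. Avoidance of 321 then forces two further conditions: the suffix is strictly increasing (any descent in the suffix together with $n$ in front yields a 321), and when $k < n$ the prefix is strictly increasing as well (any descent in the prefix combined with any element of the nonempty suffix yields a 321).

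These constraints determine $\pi$ completely. For each $k \in \{1, \ldots, n-1\}$ there is a unique admissible permutation, namely
\[
(n-k+1,\, n-k+2,\,\ldots,\, n-1,\, n,\, 1,\, 2,\,\ldots,\, n-k),
\]
and a quick direct check confirms it avoids both 132 and 321. For $k = n$, we have $\pi = \pi' \cdot n$ for an arbitrary $\pi' \in \Sn_{n-1}(132, 321)$. This yields the recurrence $a_n = a_{n-1} + (n-1)$ with $a_1 = 1$, whose closed-form solution is $a_n = 1 + \binom{n}{2}$.

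The main obstacle is merely careful bookkeeping: verifying that the constraints from the two forbidden patterns are simultaneously realizable, handling the boundary cases $k=1$ (empty prefix) and $k=n$ (empty suffix), and confirming that the single candidate permutation in each $k < n$ case truly avoids both patterns. There is no deep technical difficulty; the argument parallels the structural characterizations already used in the paper for Classes 7 and 22A. A natural alternative would be to build an explicit bijection with the inversion sequences of Class 22A enumerated above, since both families are parametrized by essentially the same data (a break point together with one auxiliary value).
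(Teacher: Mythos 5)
Your proposal is correct, but note that the paper does not actually prove this statement: it is quoted as a known result of Simion and Schmidt, with only a citation. So there is no in-paper proof to compare against line by line. Your argument stands on its own. The symmetry reduction is right ($132^R=231$, $132^C=312$, $132^{RC}=213$, and $321$ maps to $123$ under either $R$ or $C$, so the four listed pairs form a single orbit), and the structural analysis of $\Sn_n(132,321)$ by the position $k$ of the maximum is sound: the $132$-condition forces the prefix values to dominate the suffix values, the $321$-condition forces both blocks to be increasing when the suffix is nonempty, and the case $k=n$ correctly reduces to $\Sn_{n-1}(132,321)$ since appending the maximum at the end cannot complete either pattern. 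The recurrence $a_n=a_{n-1}+(n-1)$ then gives $1+\binom{n}{2}$. It is worth pointing out that your characterization of $\Sn_n(132,321)$ is precisely the reverse-complement of the characterization the paper \emph{does} establish for the pair $(213,321)$, namely permutations of the form $\pi_1<\cdots<\pi_t>\pi_{t+1}<\cdots<\pi_n$ with $\pi_t,\ldots,\pi_n$ consecutive; and the ``natural alternative'' you mention at the end --- a bijection with the Class 22A inversion sequences --- is exactly what the paper carries out via $\Theta(\Sn_n(213,321))=\I_n(<,\neq,-)$. So your route and the paper's surrounding material are two faces of the same argument, related by a dihedral symmetry.
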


We can relate these permutations to the inversion sequences in $\I_n(<,\neq,-)$.  Recall the bijection  $\Theta(\pi):  \Sn_n \rightarrow \I_n$ for  $\pi=\pi_1 \ldots \pi_n \in \Sn_n$  defined by $\Theta(\pi)=(e_1,e_2, \ldots, e_n)$, where $e_i = |\{j \ | \ j < i \ {\rm and} \ e_j > e_i \}|$.  

\begin{theorem}
$ \I_n(<,\neq,-)= \Theta(\Sn_n(213,321))$.
\end{theorem}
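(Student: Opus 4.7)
The plan is to combine a counting argument with a structural analysis of $\Sn_n(213,321)$. Both sets have cardinality $\binom{n}{2}+1$: the left by the theorem just proved in this section, the right by Simion--Schmidt's Theorem~\ref{SimionSchmidtCaterer}. Since $\Theta$ is a bijection from $\Sn_n$ onto $\I_n$, it suffices to establish the inclusion $\Theta(\Sn_n(213,321)) \subseteq \I_n(<,\neq,-)$.

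Given a non-identity $\pi \in \Sn_n(213,321)$, let $t$ be the position of its first descent, so $\pi_1 < \pi_2 < \cdots < \pi_t > \pi_{t+1}$. I would first establish three structural properties: (i) $\pi_t = n$, for otherwise $n$ sits at some later position $k$ and $(\pi_t,\pi_{t+1},\pi_k)$ forms a 213 pattern since $\pi_{t+1} < \pi_t < n$; (ii) $\pi_{t+1} < \pi_{t+2} < \cdots < \pi_n$, for otherwise a later descent $\pi_s > \pi_{s+1}$ combined with $\pi_t = n$ produces $n > \pi_s > \pi_{s+1}$, a 321 pattern; and (iii) setting $A = \{\pi_1,\ldots,\pi_{t-1}\}$ and $B = \{\pi_{t+1},\ldots,\pi_n\}$, the set $B$ is a contiguous block $\{m, m+1, \ldots, M\}$ of integers. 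Claim (iii) is the heart of the argument: if $B$ were not contiguous, some $v \in A$ would lie strictly between two values $w < x$ of $B$, and since $B$ appears in increasing order along the tail, the triple at positions (position of $v$, position of $w$, position of $x$) would realize the pattern 213.

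With this structure in hand, computing $e = \Theta(\pi)$ is direct. The entries $e_1,\ldots,e_t$ vanish since $\pi_1 < \cdots < \pi_t$. For each $i > t$, the inversions ending at $i$ decompose into the single contribution from $\pi_t = n$, no contribution from the increasing tail preceding $i$, and the contribution from $A$. Since $\pi_i \in B = \{m,\ldots,M\}$ and $A$ is disjoint from $\{m,\ldots,M\}$, the set $\{v \in A : v > \pi_i\}$ equals $A \cap \{M+1,\ldots,n-1\} = \{M+1,\ldots,n-1\}$ regardless of which $\pi_i \in B$ we choose. Hence $e_i = 1 + (n-1-M) = n - M$ for every $i > t$, giving $e = (0^t,(n-M)^{n-t})$. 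This is exactly the shape in the observation characterizing $\I_n(<,\neq,-)$ (with $1 \leq n-M \leq t$ following from $|B| = n-t$ and $B \subseteq \{1,\ldots,n-1\}$). The identity permutation maps to $(0,\ldots,0)$, the remaining element of $\I_n(<,\neq,-)$, completing the inclusion and hence the theorem.

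The main obstacle is claim (iii). Claims (i) and (ii) are local to a single descent, but forcing $B$ to be a block of consecutive integers requires extracting the global consequence of 213-avoidance for the interaction between the head $A$ and the tail $B$; once this constraint is in hand, the inversion-count in the tail collapses to the single value $n-M$, which is what makes $\Theta(\pi)$ land in the highly restricted set $\I_n(<,\neq,-)$.
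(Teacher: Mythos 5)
Your proof is correct, and it reorganizes the argument rather than merely reproducing the paper's. The paper's proof is a terse two-way assertion: $e$ satisfies \eqref{equation:22A} if and only if $\pi=\Theta^{-1}(e)$ is an increasing run followed by a single descent and an increasing tail of consecutive values, and such permutations are exactly those avoiding $213$ and $321$; no counting is invoked. You instead prove only the inclusion $\Theta(\Sn_n(213,321))\subseteq \I_n(<,\neq,-)$ --- deriving the structure of a $(213,321)$-avoider from scratch via claims (i)--(iii) and computing $\Theta(\pi)=(0^t,(n-M)^{n-t})$ explicitly --- and then obtain equality by comparing cardinalities through Theorem~\ref{SimionSchmidtCaterer} and the count $|\I_n(<,\neq,-)|=\binom{n}{2}+1$. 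What your route buys is rigor and precision: claims (i)--(iii) amount to an actual proof of the characterization the paper only asserts, and each step checks out (the triple $(\pi_t,\pi_{t+1},\pi_k)$ in (i) is indeed a $213$; the gap element $v\in A$ in (iii) together with the two surrounding tail values forms a $213$ because the tail is increasing; the count $|\{v\in A: v>\pi_i\}|=n-1-M$ is independent of $i$ precisely because $B$ is an interval; and $1\le n-M\le t$ follows from $|B|=n-t$ and $n\notin B$). Indeed your formulation --- peak equal to $n$ at position $t$, with the \emph{tail} $B=\{\pi_{t+1},\ldots,\pi_n\}$ an interval --- is stated more carefully than the paper's phrase ``$\pi_t,\pi_{t+1},\ldots,\pi_n$ are consecutive integers,'' which read literally fails for, e.g., $\pi=231$ or $\pi=3412$. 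What your route costs is self-containedness: it leans on the external enumeration of $\Sn_n(213,321)$, whereas verifying the reverse inclusion directly (every $e$ of the form \eqref{equation:22A} arises as $\Theta$ of such a permutation, which your computation essentially makes evident since $t$ and $M$ can be read off from $e$) would remove that dependence.
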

\begin{proof}
Note that $e \in \I_n$ satisfies \eqref{equation:22A}
if and only if
 $\pi = \Theta^{-1}(e)$ satisfies
\[
\pi_1 < \pi_2 < \ldots < \pi_t  > \pi_{t+1}  < \pi_{t+2} < \ldots < \pi_n,
\]
where $\pi_t, \pi_{t+1}, \ldots, \pi_n$ are consecutive integers.
Such permutations are precisely the ones that avoid both $213$ and $321$.
\end{proof}

The patterns $( <, -, <)$ and $(\geq, \not =, -)$ are
Wilf-equivalent to the pattern $(< ,\ne , -)$  on inversion sequences, although the three patterns are pairwise inequivalent.  This is clear from the following characterizations.

\subsubsection{Class {\bf 22B}: Avoiding $e_i < e_j$ and $e_i < e_k$}

\begin{observation} The inversion sequences $e \in \I_n$  with no $i<j<k$ such that  $e_i < e_j$ and $e_i < e_k$  are those satisfying, 
for some $t$ where $1 \leq t \leq n$,
\begin{equation}
0 = e_1=e_2= \ldots = e_{t-1} \leq e_t \geq e_{t+1} = \ldots = e_n =0.
\label{equation:22B}
\end{equation}
\end{observation}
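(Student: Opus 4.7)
My plan is to prove the stated ``iff'' by showing both implications separately. The key reading of the displayed form is that it says precisely that $e$ has every entry equal to $0$ except possibly the entry at a single position $t$, so the task reduces to showing that avoiding the condition ``$e_i < e_j$ and $e_i < e_k$'' is equivalent to $e$ having at most one nonzero coordinate.

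For the $(\Leftarrow)$ direction, assume $e$ has the displayed form. Since all nonzero entries of $e$ sit at position $t$, any pair $i<j$ with $e_i < e_j$ forces $e_j > 0$, and hence $j = t$ and $e_i = 0$. But then every $k > j = t$ satisfies $e_k = 0 = e_i$, so the second condition $e_i < e_k$ fails. No forbidden triple exists.

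For the $(\Rightarrow)$ direction, let $M = \max\{e_1, \ldots, e_n\}$ and let $t$ be the smallest index with $e_t = M$. If $M = 0$ then $e$ is the zero sequence and the form holds trivially with $t = 1$. Otherwise $M > 0$, and since $e_1 = 0$ we must have $t \geq 2$. To show $e_i = 0$ for every $i < t$, I argue by contradiction: if some $1 < i < t$ had $e_i > 0$, then the triple $(1,i,t)$ satisfies $e_1 = 0 < e_i$ and $e_1 = 0 < e_t$, violating the avoidance hypothesis. To show $e_k = 0$ for every $k > t$, I apply avoidance to the triple $(t-1, t, k)$: since $e_{t-1} = 0 < e_t = M$, the hypothesis forces $\neg(e_{t-1} < e_k)$, i.e., $e_k \leq 0$, hence $e_k = 0$. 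Combining, $e$ has the displayed form.

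The argument is entirely routine, and I do not anticipate any real obstacle. The only care needed is in handling the degenerate cases $t = 1$ (which reduces to the all-zero sequence, with both flanking blocks vacuous) and $t = n$ (in which case the trailing block of zeros is vacuous and nothing needs to be verified to the right of $e_t$); both are easily absorbed into the above case analysis.
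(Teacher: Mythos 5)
Your proof is correct. The paper records this as an Observation with no proof supplied, so there is no argument to compare against; your reduction of the displayed form to ``$e$ has at most one nonzero entry,'' followed by the two directions using the triples $(1,i,t)$ and $(t-1,t,k)$ (with the degenerate cases $M=0$, $t=2$, and $t=n$ handled as you indicate), is a complete and correct verification of exactly what the authors left to the reader.
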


\subsubsection{Class {\bf 22C}: Avoiding $e_i \geq e_j \neq e_k$}

\begin{observation} The inversion sequences $e \in \I_n$  with no $i<j<k$ such that  $e_i \geq e_j  \ne e_k$ are those satisfying, 
for some $t$ where $1 \leq t \leq n$,
\begin{equation}
e_1 < e_2 < \ldots < e_{t-1} \geq e_t = e_{t+1} = \ldots = e_n.
\label{equation:22C}
\end{equation}
\end{observation}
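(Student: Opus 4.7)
The plan is to prove the stated biconditional by checking both directions directly, since the characterization amounts to a ``first weak descent'' analysis analogous to the ones already carried out for Classes 22A and 22B.

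For the easy direction, I would assume $e$ has the stated form and show it avoids $(\geq,\neq,-)$. Given any $i<j<k$ in $[n]$, I split on where $j$ lies. If $j\leq t-1$, then $e_i<e_j$ by the strict increase of the prefix $e_1<e_2<\cdots<e_{t-1}$, so the first relation $e_i\geq e_j$ fails. If $j\geq t$, then $e_j=e_k$ by the constancy of the suffix $e_t=e_{t+1}=\cdots=e_n$, so the second relation $e_j\neq e_k$ fails. Either way no forbidden triple exists.

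For the nontrivial direction, I would start from $e\in\I_n(\geq,\neq,-)$ and locate the cutoff $t$ explicitly. Namely, let $t$ be the least index in $\{2,\ldots,n\}$ with $e_{t-1}\geq e_t$; if no such index exists, then $e_1<e_2<\cdots<e_n$, which is already of the desired form (interpreting the degenerate case where the ``$\geq$'' marker sits past the end of the sequence, analogously to how $t=1$ collapses the strictly increasing prefix). Otherwise, the minimality of $t$ immediately gives $e_1<e_2<\cdots<e_{t-1}$. To pin down the suffix, I would fix any $k>t$ and apply the avoidance condition to the triple $(i,j,k)=(t-1,t,k)$: since $e_{t-1}\geq e_t$, avoiding $(\geq,\neq,-)$ forces $e_t=e_k$. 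Ranging $k$ over $t+1,\ldots,n$ gives $e_t=e_{t+1}=\cdots=e_n$, completing the form.

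The main obstacle, as with the companion Observations for 22A and 22B, is simply bookkeeping on the boundary cases: when $t=1$ the strictly increasing prefix is empty and the form degenerates to $e_1=e_2=\cdots=e_n=0$; when no weak descent exists at all one needs to declare the suffix empty. Both are handled by convention rather than by any substantive argument, and no further case analysis is needed beyond what appears above.
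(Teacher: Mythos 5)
Your proof is correct and complete; the paper states this Observation without any proof (as with the companion characterizations for Classes 22A and 22B), so your argument simply supplies the routine verification the authors omitted, and it is the natural one: the forward direction by splitting on whether $j\leq t-1$ or $j\geq t$, and the converse by taking $t$ to be the first weak descent and applying the avoidance condition to the triples $(t-1,t,k)$. One small point worth flagging: as literally stated with $1\leq t\leq n$, the displayed form does not actually accommodate a strictly increasing inversion sequence (which does avoid $(\geq,\neq,-)$), since every admissible $t\geq 2$ forces $e_{t-1}\geq e_t$ and $t=1$ forces all entries equal to $0$; your convention of letting the ``$\geq$'' marker sit past the end amounts to allowing $t=n+1$, which is how the paper handles the analogous boundary in Observation 33A, so this is a minor imprecision in the statement rather than a gap in your proof.
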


\subsection{Classes 33(A,B):  $F_{n+2}-1$}
\label{section:33}

We show that {\bf 33A}: $(\neq, \leq , -)$ and {\bf 33B}:  $(\geq, \leq, \neq)$ are inequivalent Wilf-equivalent patterns whose avoidance sequences are counted by  $F_{n+2}-1$.

\subsubsection{Class {\bf 33A}: Avoiding $e_i  \not = e_j  \le e_k$}

\begin{theorem}
$|\I_n(\ne , \leq, -)|= F_{n+2}-1$.
\end{theorem}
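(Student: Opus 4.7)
The plan is to first establish the characterization from Table~\ref{interesting}: $e \in \I_n(\neq,\leq,-)$ if and only if either $e = (0,\ldots,0)$, or there exists $t \in \{1,\ldots,n-1\}$ with $e_1 = \cdots = e_t = 0 < e_{t+1}$ and $e_{t+1} > e_{t+2} > \cdots > e_n$. For the forward direction, let $t$ be maximal such that $e_1 = \cdots = e_t = 0$; if $t < n$ then $e_{t+1} > 0$. Applying the avoidance condition at $(1,j,k)$ whenever $t+1 \leq j < k \leq n$ and $e_j > 0$ (so $e_1 = 0 \neq e_j$) forces $e_j > e_k$. The one subtle point is excluding intermediate zeros: if $e_j = 0$ for some $t+1 < j < n$, then at the triple $(t+1,j,k)$ with $k > j$ we have $e_{t+1} > 0 = e_j$, and avoidance demands $e_k < 0$, a contradiction. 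Hence $e_{t+1},\ldots,e_{n-1}$ are all positive and the strict descent $e_{t+1} > \cdots > e_n$ follows. The converse is immediate by checking the two cases $j \leq t$ and $j > t$ for any triple $i<j<k$.

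With the characterization in hand, I would count by peak position $t+1$. For fixed $t$, the tail $(e_{t+1},\ldots,e_n)$ is a strictly decreasing sequence of length $m := n-t$ from $\{0,1,\ldots,t\}$ (since $e_{t+1} \leq t$ and the other upper bounds follow from the descent) whose maximum is positive. Such sequences are in bijection with $m$-subsets of $\{0,\ldots,t\}$, so there are $\binom{t+1}{m}$ of them, and the only excluded one (having maximum $0$) occurs when $m=1$ and the subset is $\{0\}$. This gives $1$ sequence for $t=n$, exactly $t$ sequences for $t=n-1$, and $\binom{t+1}{n-t}$ for $1 \leq t \leq n-2$. Writing $a_n := |\I_n(\neq,\leq,-)|$ and re-indexing with $s = t+1$,
\[
a_n \;=\; 1 + (n-1) + \sum_{t=1}^{n-2}\binom{t+1}{n-t} \;=\; n + \sum_{s=2}^{n-1}\binom{s}{n+1-s}.
\]

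The last step applies the classical identity $F_{n+2} = \sum_{k \geq 0}\binom{n+1-k}{k} = \sum_{s \geq 0}\binom{s}{n+1-s}$. For $n \geq 2$ the terms $s = 0,1$ vanish, while the terms $s = n$ and $s = n+1$ contribute $n$ and $1$, respectively, so $\sum_{s=2}^{n-1}\binom{s}{n+1-s} = F_{n+2} - n - 1$, and therefore $a_n = F_{n+2} - 1$. The cases $n=1,2$ are immediate. The principal obstacle is the forward direction of the characterization---specifically, ruling out intermediate zeros inside the decreasing tail---since everything after that reduces to a routine binomial count paired with a standard Fibonacci identity.
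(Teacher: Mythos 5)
Your proof is correct. It shares with the paper the same first step --- the characterization that $e \in \I_n(\neq,\leq,-)$ means a (possibly empty) prefix of zeros followed by a strictly decreasing tail with positive first entry, which is the paper's equation \eqref{equation:33A} --- and your verification of that characterization, including the argument ruling out interior zeros, is sound. Where you diverge is in how the count is finished. The paper partitions $\I_n(\neq,\leq,-)$ into the zero sequence, the sequences ending in a nonzero entry, and the nonzero sequences ending in $0$, and exhibits insertion maps showing the last two sets have sizes $a_{n-1}$ and $a_{n-2}$; this yields the recurrence $a_n = a_{n-1} + a_{n-2} + 1$ with $a_1=1$, $a_2=2$, whence $a_n = F_{n+2}-1$. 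You instead sum directly over the peak position, identifying each tail with an $(n-t)$-subset of $\{0,\dots,t\}$ to get $a_n = n + \sum_{s=2}^{n-1}\binom{s}{n+1-s}$, and then invoke the diagonal-sum identity $F_{n+2} = \sum_{s}\binom{s}{n+1-s}$. The recurrence route is shorter and self-contained, needing no binomial identity; your route has the advantage of producing an explicit refinement of the count by the length of the zero prefix (equivalently by peak position), at the cost of importing a classical Fibonacci identity and a bit of boundary bookkeeping for the cases $t = n$ and $t = n-1$, which you handle correctly.
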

\begin{proof}
Observe that 
the inversion sequences $e \in \I_n$  with no $i<j<k$ such that  $e_i  \not = e_j  \le e_k$ are those satisfying, 
for some $t$ where $1 \leq t \leq n+1$,
\begin{equation}
0 = e_1=e_2= \ldots = e_{t-1} < e_t > e_{t+1}> \ldots > e_n.
\label{equation:33A}
\end{equation}

We can partition the inversion sequences in $\I_n(\neq, \leq, -)$ into three disjoint sets: $\{(0,0,\ldots,\allowbreak0)\}$, $A = \{e \in\I_n(\neq, \leq, -) \mid e_n \neq 0\}$, and $B = \{e \in\I_n(\neq, \leq, -) \mid e \neq 0, e_n = 0\}$.  Any inversion sequence in $A$ can be constructed by taking any $e' = (e_1,e_2,\ldots,e_{n-1}) \in \I_{n-1}(\neq,\leq,-)$ and letting $t$ be the index of the first nonzero entry (if there is no nonzero entry, set $e_t=e_{n-1}$).  Then we can use the characterization given by  (\ref{equation:33A}) to verify that $(0,e_1,e_2,\ldots,e_{t-1},e_t+1,\ldots,e_{n-1}+1)$ is an element of $A$.  

Any element of $B$ can be constructed by taking some $e'' = (e_1,e_2,\ldots,e_{n-2}) \in \I_{n-2}(\neq,\leq,-)$ and letting $t$ be the index of the first nonzero entry (again, if no such entry exists, set $e_t=e_{n-2}$).  Then $(0,e_1,\ldots,e_{t-1},e_t+1,\ldots,e_{n-2}+1,0)$ is an element of $B$.  

Setting $a_n=|\I_n(\ne , \leq, -)|$, this gives $a_n=a_{n-1}+a_{n-2} + 1$, with initial conditions $a_1=1$, $a_2=2$. So $a_n = F_{n+2}-1$.
\end{proof}

\subsubsection{Class {\bf 33B}:  Avoiding $e_i  \geq  e_j  \leq e_k$ and $e_i \neq e_k$}

\begin{theorem}
$|\I_n(\ge,  \leq, \ne)|= F_{n+2}-1$.
\end{theorem}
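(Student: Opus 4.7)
The plan is to mirror the template of the Class~33A proof. First I would adopt the characterization (displayed in Table~\ref{interesting}) that $e \in \I_n(\geq,\leq,\neq)$ if and only if there exist $1 \le t \le s \le n$ with
\[
0 = e_1 < e_2 < \cdots < e_t = e_{t+1} = \cdots = e_s > e_{s+1} > \cdots > e_n.
\]
Because $e_1 = 0$ and the prefix is strictly increasing, the inversion-sequence bound $e_i < i$ forces $e_i = i-1$ for $1 \le i \le t$, and the plateau value is therefore $t-1$; hence each such sequence is determined by the pair $(t,s)$ together with the strictly decreasing tail in $\{0,1,\ldots,t-2\}$.

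Verifying the characterization is the main technical step. The forward direction is a case analysis on which of the three segments each of $i<j<k$ lies in; in every case either $e_i \geq e_j$ or $e_j \leq e_k$ fails, or else $e_i = e_k$. For the converse, suppose $e$ avoids $(\geq,\leq,\neq)$, set $M = \max(e)$, and let $t$ and $s$ be the first and last indices where $M$ is attained. A single application of the forbidden triple with $k = t$ shows that $e_1 < e_2 < \cdots < e_t$, and with $i = s$ that $e_s > e_{s+1} > \cdots > e_n$. The subtle point is solidity of the plateau: if some $t<j<s$ had $e_j < M = t-1$, then $i' := e_j + 1$ satisfies $i' \le t-1$ and $e_{i'} = i' - 1 = e_j$, so the triple $(i',j,s)$ gives $e_{i'} \ge e_j$, $e_j \le e_s = M$, and $e_{i'} = e_j \neq M = e_s$, contradicting avoidance.

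With the characterization in hand, set $a_n = |\I_n(\geq,\leq,\neq)|$; direct inspection gives $a_1=1$ and $a_2=2$. For $n \geq 3$ I would partition $\I_n(\geq,\leq,\neq)$ into $\{(0,\ldots,0)\}$, $A = \{e : e_n > 0\}$, and $B = \{e : e_n = 0,\ e \neq (0,\ldots,0)\}$, and introduce the maps
\[
\sigma(e') = (0,\, e'_1+1,\, \ldots,\, e'_{n-1}+1), \qquad \tau(e'') = (0,\, e''_1+1,\, \ldots,\, e''_{n-2}+1,\, 0).
\]
The characterization shows at once that $\sigma$ maps $\I_{n-1}(\geq,\leq,\neq)$ into $A$ and that $\tau$ maps $\I_{n-2}(\geq,\leq,\neq)$ into $B$: prepending a $0$ and shifting every subsequent entry up by one extends the increasing prefix by one step and preserves both the plateau and the decreasing tail. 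The key observation that makes $\sigma^{-1}$ well-defined is that every $e \in A$ satisfies $e_i \ge 1$ for all $i \ge 2$: in the increasing prefix $e_i = i-1 \ge 1$; the plateau value $t-1$ is positive (else $t=1$ would force $s=n$ and $e_n=0$); and every decreasing-tail entry strictly exceeds $e_n > 0$. Applying the same observation to $(e_1,\ldots,e_{n-1})$ inverts $\tau$. Therefore $\sigma$ and $\tau$ are bijections, yielding $|A| = a_{n-1}$, $|B| = a_{n-2}$, and the recurrence $a_n = a_{n-1} + a_{n-2} + 1$.

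Adding $1$ to both sides converts this into a clean Fibonacci recurrence, $(a_n+1) = (a_{n-1}+1) + (a_{n-2}+1)$, with initial values $a_1 + 1 = 2 = F_3$ and $a_2 + 1 = 3 = F_4$. Therefore $a_n + 1 = F_{n+2}$ and $|\I_n(\geq,\leq,\neq)| = F_{n+2} - 1$. I expect the plateau-solidity step in the characterization to be the only non-routine calculation; once the characterization is available, the bijections and the recurrence closure are direct adaptations of the Class~33A argument.
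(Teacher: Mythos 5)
Your proof is correct, but it takes a different route from the paper's. The paper handles Class 33B by first stating the same unimodal characterization (strictly increasing prefix, solid plateau at the maximum, strictly decreasing tail) and then exhibiting a direct bijection to Class 33A: for $e \in \I_n(\geq,\leq,\neq)$, zero out all entries before the first descent (taking $s=n$ if $e$ is weakly increasing), which lands in $\I_n(\neq,\leq,-)$ and transfers the already-established count $F_{n+2}-1$. You instead transplant the Class 33A machinery wholesale — the partition into $\{(0,\ldots,0)\}$, the set $A$ of sequences with positive last entry, and the set $B$ of nonzero sequences ending in $0$, together with the prepend-a-zero-and-increment maps — and rederive the recurrence $a_n=a_{n-1}+a_{n-2}+1$ directly for Class 33B. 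Both arguments rest on the characterization, which you verify in more detail than the paper does (the plateau-solidity step via the index $i'=e_j+1$ is exactly the point the paper leaves implicit, and your argument for it is sound, as is the observation that $e_i\geq 1$ for $i\geq 2$ which makes $\sigma$ and $\tau$ invertible). What the paper's bijection buys is brevity and an explicit Wilf-equivalence map between the two classes; what your version buys is a self-contained proof that does not depend on first proving the 33A theorem, at the cost of repeating the recurrence analysis.
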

\begin{proof}
The inversion sequences $e \in \I_n$  with no $i<j<k$ such that  $e_i  \geq  e_j  \le e_k$ and $e_i \ne e_k$ are those satisfying, 
for some $t,s$ where $1 \leq t  \leq s \leq n$,
\begin{equation}
e_1<e_2< \ldots < e_{t-1} <e_t = e_{t+1}= \ldots = e_s> e_{s+1} > \ldots > e_n.
\label{equation:21B}
\end{equation}

The following is a bijection mapping $\I_n(\ge,  \leq, \ne)$ to $\I_n(\neq,\leq,-)$.
For $e \in \I_n(\ge,  \leq, \ne)$, let $s$ be the first index, if any, such that $e_s > e_{s+1}$; if $e$ is weakly increasing, set $s=n$. To obtain an element of $ \I_n(\neq,\leq,-)$, set $e_i=0$ for $i=1, \ldots, s-1$.
\end{proof}

\subsection{Classes 64(A,B,C,D):   $2^{n-1}$}
\label{section:64}

\subsubsection{Class 64A:  Avoiding $e_i  =  e_j  < e_k$}

Corteel, et al.\ \cite{paisI} characterized $\I_n(=,<,-)=\I_n(001)$ as the set of $e \in \I_n$ satisfying, for some $t \in [n]$,
\[
e_1 < e_2 < \ldots < e_t \geq e_{t+1} \geq e_{t+2} \geq \ldots \geq e_n.
\]
They showed that $|\I_n(001)|=2^{n-1}$ by showing that the bijection $\Theta: \Sn_n \rightarrow \I_n$ restricts to a bijection from  $\Sn_n(132,231)$ to $\I_n(001)$.  Simion and Schmidt \cite{SimionSchmidt} showed that permutations avoiding both 132 and 231 are enumerated by $2^{n-1}$.  

We show that three other patterns are Wilf-equivalent, though inequivalent, to class {\bf 64A}.

\subsubsection{Class 64B:  Avoiding $e_i<e_j\le e_k$}

\begin{theorem}
The number of $e \in \I_n$ with no  $i < j < k$ such that  $e_i<e_j\le e_k$  is $2^{n-1}$.
\end{theorem}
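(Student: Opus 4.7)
My plan is to prove the theorem by first giving a structural characterization of $\I_n(<,\leq,-)$, then setting up a bijection between this set and the family of even-sized subsets of $[n]$, and finally invoking the standard identity $\sum_{m\ge 0}\binom{n}{2m} = 2^{n-1}$.

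First I would show that $e \in \I_n(<,\leq,-)$ if and only if the positive entries of $e$, read in the order they occur, form a strictly decreasing subsequence. Both directions are short. A forbidden triple $e_i < e_j \leq e_k$ necessarily has $e_j > 0$ and $e_k \geq e_j > 0$, so its last two coordinates carry positive entries that fail to be strictly decreasing. Conversely, if some $j < k$ carry positive entries with $e_j \leq e_k$, then since $e_1 = 0 < e_j$ the triple $(1,j,k)$ already witnesses the forbidden pattern.

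Next I would construct the bijection. Given $e \in \I_n(<,\leq,-)$ with positive entries $v_1 > v_2 > \cdots > v_m$ occurring at positions $i_1 < i_2 < \cdots < i_m$, the inversion-sequence bound $v_1 \leq i_1 - 1$ together with the monotonicity of the $v_j$'s and $i_j$'s forces the complete interleaving
\[
v_m < v_{m-1} < \cdots < v_1 < i_1 < i_2 < \cdots < i_m,
\]
so $T(e) := \{v_1,\ldots,v_m,i_1,\ldots,i_m\}$ is an even-sized subset of $[n]$. For the inverse, given $T = \{x_1 < x_2 < \cdots < x_{2m}\}\subseteq [n]$, I would declare the positive entries of the corresponding $e$ to be $v_j = x_{m+1-j}$ at positions $i_j = x_{m+j}$ for $j = 1,\ldots,m$, with every other coordinate set to $0$; all required inequalities ($v_j < i_j$ and strict monotonicity in $j$) follow immediately from the ordering of $T$. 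These two maps are mutually inverse, and since the number of even-sized subsets of $[n]$ is $\sum_{m\ge 0}\binom{n}{2m} = 2^{n-1}$, the count follows.

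The only real conceptual step is the interleaving inequality in the bijection, which hinges entirely on the bound $v_1 < i_1$: combined with monotonicity in $j$, this one inequality pushes every positive value below every position, and from there both halves of the bijection are essentially forced. I do not anticipate any serious obstacle.
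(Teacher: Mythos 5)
Your proof is correct, and it takes a genuinely different route from the paper's. Both arguments begin with the same characterization of $\I_n(<,\leq,-)$ as the inversion sequences whose positive entries form a strictly decreasing subsequence (and your two-line justification of that characterization is sound, including the observation that a positive entry forces $j\geq 2$ so that the triple $(1,j,k)$ is legitimate). From there the paper proceeds by recursion: writing $B_n=\I_n(<,\leq,-)$, it shows $|B_n|=2|B_{n-1}|$ by splitting $B_n$ according to whether the sequence contains the value $1$, matching the two halves with $B_{n-1}$ via the maps $e'\mapsto 0\cdot\sigma_1(e')$ and $e'\mapsto e'\cdot 0$ or $e'\cdot 1$. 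You instead give a direct bijection with even-sized subsets of $[n]$, and the key interleaving inequality $v_m<\cdots<v_1<i_1<\cdots<i_m$ does follow exactly as you say from $v_1\leq i_1-1$ together with the monotonicity of the values and positions; the inverse map is well defined because $x_{m+1-j}<x_{m+j}$ guarantees the inversion-sequence bound at each chosen position. Your approach buys an explicit, equation-free bijective explanation of why the answer is $2^{n-1}$ (via $\sum_m\binom{n}{2m}=2^{n-1}$) and makes the structure of the avoidance class completely transparent; the paper's recursion is shorter to state and fits the uniform style used for the other $2^{n-1}$ classes in that section, but it conceals the subset structure that your bijection exposes.
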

\begin{proof}
First observe that the inversion sequences  $e$ with no  $i < j < k$ such that  $e_i<e_j\le e_k$   are those
whose positive entries form a strictly decreasing sequence.

Let $B_n = \I_n(<, \leq,-)$.  Notice that $|B_1| = |\{(0)\}|=1$; we will show that 
for $n > 1$, $|B_n| = 2|B_{n-1}|$.
Recall that $\sigma_1(e)$ adds 1 to each positive element in $e$.  


An $e \in B_n$ has no ``$1$'' if and only if it has the form $0 \cdot \sigma_1(e')$ for some $e' \in B_{n-1}$, so there are $|B_{n-1}|$ such $e$.
An $e \in B_n$ has a ``$1$'' if and only if it has the form $e' \cdot 0$ for some $e' \in B_{n-1}$ containing a  ``$1$'' or the form $e' \cdot 1$ for some $e' \in B_{n-1}$  not containing a  ``$1$'', so there are also $|B_{n-1}|$  elements of $B_n$ containing a ``$1$''.
\end{proof}

\subsubsection{Class {\bf 64C}:  Avoiding $e_i<e_j \geq e_k$}

\begin{theorem}
The number of $e \in \I_n$ with no $i < j < k$ such that $e_i<e_j\ge e_k$ is $2^{n-1}$.
\label{theorem:64C}
\end{theorem}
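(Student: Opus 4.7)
The plan is to prove the theorem by direct enumeration via a structural characterization of $\I_n(<, \geq, -)$.

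First, I would establish the following characterization: $e \in \I_n$ avoids the pattern $(<, \geq, -)$ if and only if there exists $t \in \{1, 2, \ldots, n\}$ such that $e_1 = e_2 = \cdots = e_t = 0$ and $e_{t+1} < e_{t+2} < \cdots < e_n$, where the tail is taken to be empty when $t = n$. The ``if'' direction is immediate, since any triple $i < j < k$ with $e_i < e_j$ must have $j \geq t+1$, and then the strictly increasing tail forces $e_j < e_k$, preventing $e_j \geq e_k$. For the ``only if'' direction, I would let $t$ be maximal with $e_1 = e_2 = \cdots = e_t$; since $e_1 = 0$ this common value is $0$. If $t < n$, then $e_{t+1} > 0$, so $e_1 < e_{t+1}$, and the avoidance condition applied to $(i, j, k) = (1, t+1, k)$ for each $k > t+1$ forces $e_k > e_{t+1}$. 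Iterating the same argument with $(i, j) = (t+1, t+2), (t+2, t+3), \ldots$ forces $e_{t+1} < e_{t+2} < \cdots < e_n$.

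Next, for each $t \in [n]$, I would count the strictly increasing tails. A valid tail is a sequence $1 \leq e_{t+1} < e_{t+2} < \cdots < e_n$ satisfying the inversion-sequence bound $e_j \leq j - 1$. Under the substitution $g_j := j - e_j$, such tails correspond bijectively to weakly decreasing sequences $g_{t+1} \geq g_{t+2} \geq \cdots \geq g_n$ with values in $\{1, 2, \ldots, t\}$, and the bound $e_{t+1} \geq 1$ translates to $g_{t+1} \leq t$. The number of such sequences (equivalently, multisets of size $n-t$ from $[t]$) is $\binom{n-1}{n-t}$.

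Summing over all choices of $t$ yields
\[
\bigl| \I_n(<, \geq, -) \bigr| \;=\; \sum_{t=1}^{n} \binom{n-1}{n-t} \;=\; \sum_{s=0}^{n-1} \binom{n-1}{s} \;=\; 2^{n-1}.
\]
The main obstacle is the structural characterization, but it follows cleanly from iteratively unwinding the avoidance condition; once it is in hand, the remaining argument is a routine stars-and-bars count followed by the binomial theorem.
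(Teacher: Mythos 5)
Your proof is correct and rests on exactly the same structural characterization the paper uses: $e$ avoids $(<,\geq,-)$ if and only if it is a block of zeros followed by a strictly increasing positive tail. The only difference is the final count — the paper maps $e$ directly to its set of nonzero values, giving a bijection with $2^{[n-1]}$, whereas you sum $\binom{n-1}{n-t}$ over the number $t$ of leading zeros and invoke the binomial theorem; both finishes are valid.
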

\begin{proof}
The inversion sequences avoiding the pattern $e_i<e_j\ge e_k$, where $i < j < k$, are those $e \in \I_n$
satisfying, for some $t \in [n]$,
\[
0 = e_1 =e_2 = \cdots = e_t < e_{t+1} < e_{t+2} < \cdots < e_n.
\]

Map $e \in \I_n(<,\geq,-)$ to the set consisting of its nonzero values.  Clearly this is a bijection from 
$ \I_n(<,\geq,-)$ to $2^{[n-1]}$.
\end{proof}

In fact, we can show that $\I_n(<,\geq,-)$ is  the image under $\Theta$ of $\Sn_n(213,312)$.

\begin{theorem}
$\Theta(\Sn_n(213,312))=\I_n(<,\geq,-)$.
\end{theorem}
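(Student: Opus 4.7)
My plan is to reduce the statement to the characterization of $\I_n(<,\geq,-)$ given in Theorem \ref{theorem:64C} by identifying $\Sn_n(213,312)$ as the set of unimodal permutations (i.e., those $\pi$ with $\pi_1<\pi_2<\cdots<\pi_t>\pi_{t+1}>\cdots>\pi_n$ for some $t$) and then invoking the descent-ascent correspondence already recorded in the excerpt.

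The first step is to prove that $\pi\in\Sn_n(213,312)$ if and only if $\pi$ is unimodal. For the forward implication (i.e., avoiding both patterns forces unimodality), I would argue the contrapositive: if $\pi$ is not unimodal, there is a ``valley'' index $j$ with $\pi_{j-1}>\pi_j<\pi_{j+1}$. Depending on whether $\pi_{j-1}<\pi_{j+1}$ or $\pi_{j-1}>\pi_{j+1}$, the triple at positions $j-1,j,j+1$ realizes the pattern $213$ or $312$, respectively. For the converse, let $\pi$ be unimodal with peak at position $t$, and fix any $i<j<k$. If $j\leq t$, then $\pi_i<\pi_j$ (both lie in the strictly increasing portion), while both $213$ and $312$ require $\pi_j<\pi_i$; if $j>t$, then $\pi_j>\pi_k$ (both lie in the strictly decreasing portion), while both $213$ and $312$ require $\pi_j<\pi_k$. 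Either way, no such pattern can occur.

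The second step is to translate unimodality across $\Theta$. The excerpt already notes that $i$ is a descent of $\pi$ iff $i$ is an ascent of $e=\Theta(\pi)$; this can be verified in one line by writing
\[
e_{i+1}=|\{j<i:\pi_j>\pi_{i+1}\}|+[\pi_i>\pi_{i+1}]
\]
and comparing with $e_i=|\{j<i:\pi_j>\pi_i\}|$, which shows $e_{i+1}>e_i$ iff $\pi_i>\pi_{i+1}$ and $e_{i+1}\leq e_i$ iff $\pi_i<\pi_{i+1}$. Consequently, $\pi$ is unimodal with peak at position $t$ iff the ascents of $e$ are precisely $t,t+1,\dots,n-1$ and the remaining positions satisfy $e_i=e_{i+1}$; together with $e_1=0$, this is exactly the condition $0=e_1=\cdots=e_t<e_{t+1}<\cdots<e_n$ of Theorem~\ref{theorem:64C}.

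Combining the two steps gives $\Theta(\Sn_n(213,312))=\I_n(<,\geq,-)$. I do not anticipate a real obstacle here; the only delicate point is just being careful with the two subcases ($j\leq t$ and $j>t$) in the ``unimodal implies pattern-avoiding'' direction, making sure that in each case the value $\pi_j$ at the middle position forces a comparison that is incompatible with both $213$ and $312$ simultaneously.
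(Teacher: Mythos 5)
Your proof is correct and follows essentially the same route as the paper: identify $\Sn_n(213,312)$ as the unimodal permutations, characterize $\I_n(<,\geq,-)$ as the sequences satisfying $0=e_1=\cdots=e_t<e_{t+1}<\cdots<e_n$, and match the two classes via the descent--ascent correspondence under $\Theta$. The paper merely labels the details you supply (the valley argument and the translation of unimodality) as ``straightforward'' and ``immediate.''
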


\begin{proof}
It is straightforward to prove that $\Sn_n(213,312)$ consists of the unimodal permutations where \[\pi_1<\pi_2<\cdots<\pi_t=n>\pi_{t+1}>\cdots>\pi_n.\] The inversion sequences avoiding the pattern $e_i<e_j\ge e_k$, where $i < j < k$, are those $e \in \I_n$
satisfying, for some $t \in [n]$,
\[
0 = e_1 =e_2 = \cdots = e_t < e_{t+1} < e_{t+2} < \cdots < e_n.
\]
It immediately follows that $\Theta(\Sn_n(213,312))=\I_n(<,\geq,-)$.
\end{proof}

\subsubsection{Class {\bf 64D}: Avoiding $e_i \leq e_j = e_k$}  

\begin{theorem}
The number of $e \in \I_n$ with no $i < j < k$ such that $e_i\leq e_j = e_k$ is $2^{n-1}$.
\label{theorem:64D}
\end{theorem}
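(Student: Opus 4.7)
The plan is to reduce the avoidance condition to a transparent distinctness condition and then enumerate via a one-step recurrence. First I would observe that since $e_1 = 0$ and every $e_j \geq 0$, the inequality $e_1 \leq e_j$ holds automatically for every $j \geq 2$. Consequently, any pair of indices $2 \leq j < k \leq n$ with $e_j = e_k$ produces a forbidden instance of $e_i \leq e_j = e_k$ by taking $i = 1$. Conversely, a forbidden triple $i < j < k$ forces $j \geq 2$, so the equality $e_j = e_k$ occurs among $e_2, \ldots, e_n$. Hence
\[
\I_n(\leq, -, =) = \{ e \in \I_n : e_2, e_3, \ldots, e_n \text{ are pairwise distinct}\}.
\]

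To finish, I would show that each element of $\I_{n-1}(\leq, -, =)$ extends to exactly two elements of $\I_n(\leq, -, =)$. Given a valid prefix $(e_1, \ldots, e_{n-1})$, the new entry $e_n$ must satisfy $0 \leq e_n \leq n-1$ and $e_n \notin \{e_2, \ldots, e_{n-1}\}$. The forbidden set consists of $n-2$ distinct values, each lying in $\{0, 1, \ldots, n-2\} \subset \{0, 1, \ldots, n-1\}$, so it genuinely removes $n-2$ of the $n$ potential values. Exactly two choices remain. Together with the base case $|\I_1(\leq, -, =)| = 1$, the recurrence $|\I_n(\leq, -, =)| = 2\,|\I_{n-1}(\leq, -, =)|$ yields $|\I_n(\leq, -, =)| = 2^{n-1}$.

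No serious obstacle is expected; the argument is essentially a two-line counting argument once the characterization is in place. The only point requiring brief care is verifying that the $n-2$ forbidden values all lie in the range $\{0, \ldots, n-1\}$ available to $e_n$, which follows from the inversion-sequence bound $e_i \leq i-1 \leq n-2$ for $i \leq n-1$.
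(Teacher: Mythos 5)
Your proof is correct and follows essentially the same route as the paper: characterize the avoidance set as those $e$ whose entries $e_2,\ldots,e_n$ are pairwise distinct, then note that each valid prefix admits exactly two valid choices of $e_n$, giving $|D_n|=2|D_{n-1}|$ (the paper identifies the two choices explicitly as $n-1$ and the unique element of $\{0,\ldots,n-2\}$ missing from $\{e_2,\ldots,e_{n-1}\}$, while you count them by inclusion--exclusion on the range; you also supply the short verification of the characterization that the paper merely asserts). One small notational slip: in the paper's convention the pattern $e_i\leq e_j=e_k$ is written $\I_n(\leq,=,-)$, not $\I_n(\leq,-,=)$.
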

\begin{proof}
The inversion sequences avoiding the pattern $e_i\leq e_j=e_k$, where $i < j < k$, are those $e \in \I_n$
in which all of the entries $e_2, e_3, \ldots, e_n$ are distinct.

Let $D_n = \I_n(\leq,=,-)$.
Then $|D_1| = |\{(0)\}|=1$.  We show that 
for $n > 1$, $|D_n| = 2|D_{n-1}|$.
An $e \in D_n$ ends in $n-1$ if and only if $e=e' \cdot (n-1)$ for some $e' \in D_{n-1}$, so there are $|D_{n-1}|$ such $e$.
An $e \in D_n$ ends in $d \not = n-1$ if and only if $e=e' \cdot d$ where $e' \in D_{n-1}$ and $d$ is the unique element in  $\{0,1, \ldots, n-2\} \setminus \{e_2, \ldots, e_{n-1}\}$, so there are again  $|D_{n-1}|$ such $e$.
\end{proof}

\subsection{Classes 121(A,B,C):   Grassmannian permutations,  $2^n-n$}
\label{section:121}

Permutations with at most one descent were called {\em Grassmannian} by Lascoux and Sch{\"u}tzen-berger \cite{LS},  who also characterized them in terms of their Lehmer codes.  Grassmannian permutations of length $n$ are counted by  $2^n-n$ and relate to three equivalence classes of patterns for inversion sequences.

\subsubsection{Class {\bf 121A}: Avoiding $e_i\neq e_j < e_k$}

\begin{theorem}
$|\I_n(\neq, < , -)|=2^n-n$.
\label{thm:121A}
\end{theorem}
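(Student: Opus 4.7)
The plan is to reduce the enumeration to the known count of Grassmannian permutations via the bijection $\Theta$. Recall from the introduction that $i$ is an ascent of $e$ precisely when $i$ is a descent of $\Theta^{-1}(e)$, so it suffices to establish the characterization
\[
e \in \I_n(\neq, <, -) \iff \asc(e) \leq 1.
\]
Once this is proved, $\Theta$ restricts to a bijection between $\I_n(\neq, <, -)$ and permutations with at most one descent, i.e.\ the Grassmannian permutations on $[n]$, which are known to be counted by $2^n - n$.

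For the backward direction I would argue as follows. Suppose $e$ has at most one ascent; if it has exactly one, say at position $d$ (so $e_d < e_{d+1}$), then the prefix $e_1 \geq e_2 \geq \ldots \geq e_d$ is weakly decreasing, and together with $e_1 = 0$ this forces $e_1 = e_2 = \cdots = e_d = 0$. The suffix $e_{d+1} \geq e_{d+2} \geq \ldots \geq e_n$ is also weakly decreasing. Any triple $i < j < k$ with $e_j < e_k$ must then have $j = d$, in which case $e_i = 0 = e_j$, and the avoidance condition holds. The all-zero case is handled identically.

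For the forward direction I would prove the contrapositive. Given two ascent positions $p < q$, set $j = q$ and $k = q+1$, so that $e_j < e_k$ is immediate; the task is to locate some $i < q$ with $e_i \neq e_q$. If $e_q \neq 0$, take $i = 1$. If $e_q = 0$, then the ascent at $p$ forces $e_{p+1} > e_p \geq 0$, which both rules out $p+1 = q$ (so $p+1 < q$) and guarantees $e_{p+1} \neq 0 = e_q$, so $i = p+1$ works. In either case the triple $(i,j,k)$ witnesses the forbidden pattern.

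With the characterization in hand, the theorem follows immediately from the enumeration of Grassmannian permutations. I expect no real obstacle: the only mild subtlety is the small case split in the contrapositive, which is genuinely finite and handled by the observation that $e_1 = 0$ combined with a prior ascent produces a nonzero value that is available to play the role of $e_i$.
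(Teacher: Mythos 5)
Your proof is correct and follows the same route as the paper's: characterize $\I_n(\neq,<,-)$ as the inversion sequences with at most one ascent, then transport the count through $\Theta$ (which turns descents of $\pi$ into ascents of $e$) to the Grassmannian permutations, counted by $2^n-n$. You additionally supply the details of the characterization, which the paper merely asserts; the only tiny slip is that in the backward direction a triple with $e_j < e_k$ need only have $j \leq d$ rather than $j = d$, but since $i < j \leq d$ still forces $e_i = e_j = 0$ the conclusion is unaffected.
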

\begin{proof}
First observe that those $e \in \I_n$ with no $i < j < k$ such that $e_i\neq e_j < e_k$ are exactly those with at most one ascent.

Using the mapping $\Theta:  \Sn_n \rightarrow \I_n$, recall that
$\pi$ has a descent in a position $i$ if and only if $\Theta(\pi)$ has an ascent in position $i$.
Thus $\Theta$ restricts to a bijection from Grassmannian permutations of $[n]$ to $\I_n(\neq, < , -)$.
\end{proof}

The inversion sequences in $\I_n(\neq, < , -)$ correspond to the Grassmannian Lehmer codes used by Lascoux and Sch{\"u}tzenberger \cite{LS} via the natural bijection (reversal) between inversion sequences and Lehmer codes.

\subsubsection{Class {\bf 121B}: Avoiding $e_i\neq e_j \neq  e_k$ and $e_i \neq e_k$}

\begin{theorem}
$|\I_n(\neq,\neq,\neq)|=2^{n}-n$.
\label{thm:121B}
\end{theorem}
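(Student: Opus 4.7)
The plan is to reduce $\I_n(\neq,\neq,\neq)$ to a very concrete combinatorial description and then count directly.

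First I would observe that an inversion sequence $e$ contains positions $i<j<k$ with $e_i \neq e_j$, $e_j \neq e_k$, and $e_i \neq e_k$ \emph{if and only if} the multiset $\{e_1,\ldots,e_n\}$ contains at least three distinct values. Indeed, if three distinct values $a,b,c$ appear in $e$, one can pick witnessing indices and then reorder them; conversely, if fewer than three distinct values appear, no triple of pairwise distinct entries can exist. Hence
\[
\I_n(\neq,\neq,\neq) = \{\,e \in \I_n : \dist(e) \leq 2\,\}.
\]

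Next, since $e_1 = 0$ for every inversion sequence, every $e \in \I_n(\neq,\neq,\neq)$ either equals $(0,0,\ldots,0)$ or uses exactly two values, namely $0$ and some $v \in \{1,2,\ldots,n-1\}$. For a fixed such $v$, I would count the inversion sequences in which each entry is either $0$ or $v$ and $v$ appears at least once. The constraint $e_i < i$ forces the value $v$ to appear only at positions $i \geq v+1$, which gives exactly $n-v$ eligible positions. Each of these positions independently can be set to $0$ or $v$, and we must exclude the all-zero choice. This contributes $2^{n-v} - 1$ sequences.

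Summing the contributions yields
\[
|\I_n(\neq,\neq,\neq)| = 1 + \sum_{v=1}^{n-1} \bigl(2^{n-v} - 1\bigr) = 1 + (2^n - 2) - (n-1) = 2^n - n,
\]
as required. The only real content is the reduction in the first paragraph; after that, the enumeration is a short geometric sum, so there is no genuine obstacle. (As a sanity check, this matches $|\I_n(\neq,<,-)|$ from Theorem \ref{thm:121A}, so the two patterns, though inequivalent, are Wilf-equivalent as claimed by the class label 121B.)
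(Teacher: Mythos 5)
Your proof is correct. The key step---identifying $\I_n(\neq,\neq,\neq)$ with the set of inversion sequences having at most two distinct entries---is exactly the characterization the paper uses, and your justification of it (three pairwise-distinct values at any three positions, in any order, give an occurrence of the pattern) is sound. Where you diverge is in the counting: the paper appends a last entry to a sequence in $\I_{n-1}(\neq,\neq,\neq)$ and derives the recurrence $a_n = 2(a_{n-1}-1)+n$, whereas you sum directly over the possible nonzero value $v$, using the observation that the bound $e_i < i$ confines $v$ to the $n-v$ positions $v+1,\ldots,n$, giving $2^{n-v}-1$ sequences for each $v$ and hence $1+\sum_{v=1}^{n-1}(2^{n-v}-1)=2^n-n$. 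Your route is slightly more explicit: it avoids solving a recurrence and, as a bonus, refines the count by the value of the unique nonzero entry (there are $2^{n-v}-1$ sequences with nonzero value $v$), which the recurrence does not exhibit. The paper's recurrence, on the other hand, is the more uniform template it applies elsewhere. Both arguments are complete and equally rigorous.
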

\begin{proof}
Note that inversion sequences with no $i < j < k$ such that $e_i\neq e_j \neq  e_k$ and $e_i \neq e_k$ are those with
at most 2 distinct entries; precisely, $|\{e_1, \ldots, e_{n}\}| \leq 2$.

The theorem is clear for $n=1$.  Now consider some $e \in \I_n(\neq,\neq,\neq)$ when $n>1$.  Note that $(e_1, \ldots, e_{n-1}) \in \I_{n-1}(\neq,\neq,\neq)$.  It follows that either (1)
$|\{e_1, \ldots, e_{n-1}\}|=2$, and $e_n$ is one of the two elements occurring in $(e_1, \ldots, e_{n-1})$; or (2)
$|\{e_1, \ldots, e_{n-1}\}|=1$, and $e_n \in \{0, 1, \ldots, n-1\}$. Furthermore, the only inversion sequence in $\I_{n-1}(\neq,\neq,\neq)$ where $|\{e_1, \ldots, e_{n-1}\}|=1$ is the zero inversion sequence.  This gives the recurrence
$|\I_n(\neq,\neq,\neq)|=2( |\I_{n-1}(\neq,\neq,\neq)| -1) + n$ which has the claimed solution.
\end{proof}

\subsubsection{Class {\bf 121C}: Avoiding $e_j \geq e_k $ and $e_i \neq e_k$  }

\begin{theorem}
$|\I_n(-,\geq,\neq)|=2^{n}-n$.
\end{theorem}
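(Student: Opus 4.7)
My plan is to characterize the sequences in $\I_n(-, \geq, \neq)$ structurally and then count them directly using binomial coefficients and the hockey stick identity.

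First I would establish the following structural description: $e \in \I_n(-, \geq, \neq)$ if and only if either $e = (0,\ldots,0)$, or there exist $t \in \{1, \ldots, n-1\}$ and $p = e_{t+1} \in \{1, \ldots, t\}$ with $e_1 = \cdots = e_t = 0$ such that $(e_{t+2}, \ldots, e_n)$ is strictly increasing with each entry either $> p$ or equal to $0$ (with $0$ permitted only at position $t+2$).  To prove the forward direction, take the largest $t$ with $e_1 = \cdots = e_t = 0$ and suppose $t+1 \leq j < k \leq n$ satisfies $e_j \geq e_k$.  The avoidance condition forces $e_i = e_k$ for every $i < j$; setting $i = 1$ gives $e_k = 0$, and if $j \geq t+2$, setting $i = t+1$ forces $p = 0$, a contradiction.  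Hence within the non-leading part the only permissible non-ascents have $j = t+1$ with $e_k = 0$, which yields the claimed structure.  The reverse direction is a direct case check.

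Next, for each fixed $(t, p)$ with $1 \leq p \leq t \leq n-1$ and $m := n - t - 1$, I would count the valid suffixes $(e_{t+2}, \ldots, e_n)$ by splitting into case (A) every entry lies in $\{p+1, p+2, \ldots\}$, and case (B) the first suffix entry is $0$ and the remaining entries lie in $\{p+1, p+2, \ldots\}$.  In case (A), the substitution $b_i := e_{t+1+i} - i$ converts strictly increasing suffixes with $p < e_{t+1+i} \leq t+i$ into weakly increasing sequences of length $m$ in $\{p, p+1, \ldots, t\}$, yielding $\binom{t-p+m}{m}$ suffixes.  A parallel substitution on the last $m-1$ entries in case (B) yields $\binom{t-p+m}{m-1}$ suffixes.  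By Pascal's rule, the total number of suffixes is $\binom{t-p+m+1}{m} = \binom{n-p}{n-t-1}$.

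Finally I would assemble the grand total:
\begin{equation*}
|\I_n(-, \geq, \neq)| = 1 + \sum_{t=1}^{n-1} \sum_{p=1}^{t} \binom{n-p}{n-t-1}.
\end{equation*}
Under the change of variables $r = n-t-1$ and $q = n-p$, this becomes $1 + \sum_{r=0}^{n-2}\sum_{q=r+1}^{n-1}\binom{q}{r}$, and the hockey stick identity $\sum_{q=r}^{n-1}\binom{q}{r} = \binom{n}{r+1}$ reduces the inner sum to $\binom{n}{r+1} - 1$, giving $1 + (2^n - 2) - (n-1) = 2^n - n$.

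The main obstacle is the structural characterization in the first step: one must carefully propagate the avoidance condition back through the leading zeros to conclude that $e_j \geq e_k$ can occur only when $j = t+1$ and $e_k = 0$.  Once the structure is pinned down, the binomial substitutions, Pascal's rule, and the hockey stick identity are essentially mechanical.  A bijective alternative to classes 121A or 121B was considered but seems less transparent, since those sets (at most one ascent; at most two distinct values) have quite different structural anatomies from the sequences here.
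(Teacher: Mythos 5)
Your proof is correct. The structural characterization you establish is equivalent to the one in the paper: the members of $\I_n(-,\geq,\neq)$ are exactly the sequences $(0,\ldots,0,x_1,x_2,\ldots,x_t)$ and $(0,\ldots,0,x_1,0,x_2,\ldots,x_t)$ with $0<x_1<x_2<\cdots<x_t$, and your propagation argument (using $e_1=0$ to force $e_k=0$, then $e_{t+1}=p>0$ to rule out any non-ascent starting at position $j\geq t+2$) is a sound way to reach it. Where you genuinely diverge is in the enumeration. The paper notes that once the set $\{x_1,\ldots,x_t\}\subseteq[n-1]$ of nonzero values is chosen, the positions are forced (the values are packed into the final slots), so each of the $2^{n-1}$ subsets gives one sequence of the first form, and each nonempty subset other than the $n-1$ subsets of the shape $\{n-t,\ldots,n-1\}$ gives one additional sequence of the second form --- the excluded subsets being precisely those for which inserting the extra zero would put $x_1=n-t$ in position $n-t$ and violate $e_i<i$. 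This yields $2^{n-1}+(2^{n-1}-n)=2^n-n$ at once. You instead stratify by the position and value $(t,p)$ of the first nonzero entry, count suffixes via a stars-and-bars substitution, merge the two cases with Pascal's rule, and collapse the double sum with the hockey stick identity; all of these steps check out (including the boundary case $t=n-1$, where case (B) is empty). Both routes are valid: the paper's is shorter and essentially bijective (an almost two-to-one correspondence with subsets of $[n-1]$), while yours is more computational but requires no insight beyond the characterization itself.
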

\begin{proof}
Inversion sequences with no $i < j < k$ such that $e_j\geq e_k$ and $e_i \neq  e_k$  are those 
satisfying
\[
e_1= \ldots = e_{i-1} < e_i < \ldots < e_n
\]
or, if $e_{i+1}=0$, 
\[
e_1= \ldots = e_{i-1} < e_i   < e_{i+2} < \ldots < e_n,
\]
for some $i$ with $2 \leq i \leq n+1$.

To count these, for each  $t=1, \ldots, n-1$, and for any $t$-element subset $x_1<x_2< \ldots < x_t$ of $[n-1]$, associate the length~$n$ inversion sequence $(0,0,\ldots,0, x_1,x_2, \ldots, x_t)$ and, unless $\{x_1, \ldots, x_t\}=
\{n-t, n-t+1, \ldots, n-1\}$, also associate the length~$n$ inversion sequence $(0,0,\ldots,0, x_1,0,x_2, \ldots, x_t)$,
giving $2^{n-1} + (2^{n-1} - n)=2^n-n$.
\end{proof}


\subsection{Class 151:  $321$-avoiding separable permutations}
\label{section:151}

We show that the avoidance sequence for this pattern satisfies the recurrence
$a_n=3a_{n-1}-2a_{n-2} + a_{n-3}$ with initial conditions $a_1=1$, $a_2=2$, and $a_3=5$.
This coincides with sequence A034943 in the OEIS, where, among other things, it is said to count $321$-avoiding separable permutations (OEIS entry by Vince Vatter) \cite{Sloane}.  A {\em  separable}  permutation is one that avoids 2413 and 3142.  Moreover, we show that $(\neq,<,\leq)$-avoiding inversion sequences have a simple characterization.

\begin{theorem}
Let $A_n = \I_n(\neq,<,\leq)$ and $a_n=|A_n|$. Then $a_n=3a_{n-1}-2a_{n-2} + a_{n-3}$ with initial conditions $a_1=1$, $a_2=2$, and $a_3=5$.
\end{theorem}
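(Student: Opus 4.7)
The plan is to derive a complete structural characterization of $A_n = \I_n(\neq,<,\leq)$ and then read off the recurrence from the resulting generating function. First, I would reformulate the avoidance condition by splitting the forbidden triple $(e_i \neq e_j,\ e_j < e_k,\ e_i \leq e_k)$ according to whether $e_i < e_j$ or $e_i > e_j$. This yields the equivalent pair of conditions on $e \in \I_n$: (I) whenever $i < j$ and $e_i < e_j$, every $e_k$ with $k > j$ satisfies $e_k \leq e_j$; and (II) whenever $i < j$ and $e_i > e_j$, every $e_k$ with $k > j$ satisfies $e_k < e_i$.

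Second, I would use (I) and (II) to prove that $e \in A_n$ if and only if
\[
e = 0^{s_0} \cdot r_1^{t_1} \cdot 0^{s_1} \cdot r_2^{t_2} \cdots r_m^{t_m} \cdot 0^{s_m},
\]
where $m \geq 0$, $s_0 \geq 1$, $t_i \geq 1$, $s_i \geq 0$ for $i \geq 1$, the positive-run values satisfy $r_1 > r_2 > \cdots > r_m \geq 1$, and $r_1 \leq s_0$. The strict decrease of positive-run values follows by combining (I) applied with the initial zero $e_1 = 0$ (ruling out any ascending pair $r_a < r_b$) with (II) applied across a zero separator (ruling out $r_a = r_b$ when separated by at least one zero). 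The only binding index constraint is $r_1 \leq s_0$ coming from $e_{s_0+1} < s_0 + 1$; the analogous bounds $r_j \leq s_0 + \sum_{i < j}(t_i + s_i)$ for $j \geq 2$ are automatic since $r_j < r_1 \leq s_0$.

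Third, for each choice of $(s_0, t_1, s_1, \ldots, t_m, s_m)$ with $m \geq 1$, the number of valid value-sequences $(r_1, \ldots, r_m)$ is $\binom{s_0}{m}$, since only $r_1 \leq s_0$ is binding. The generating function therefore factors as
\[
F(x) = \frac{x}{1-x} + \sum_{m \geq 1} \frac{x^m}{(1-x)^{m+1}} \cdot \frac{x^m}{(1-x)^m} \cdot \frac{1}{(1-x)^m} = \frac{x - x^2 + x^3}{1 - 3x + 2x^2 - x^3},
\]
where the three factors inside the sum are the generating function for $s_0$ weighted by $\binom{s_0}{m}$, for the composition $(t_1, \ldots, t_m)$ with positive parts, and for $(s_1, \ldots, s_m)$ with nonnegative parts. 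Clearing the denominator yields $a_n = 3 a_{n-1} - 2 a_{n-2} + a_{n-3}$ for $n \geq 4$, with initial conditions $a_1 = 1$, $a_2 = 2$, $a_3 = 5$ verified by direct enumeration. The main obstacle will be the structural characterization in step two: in particular, ruling out two equal positive-run values separated only by zeros requires the careful combination of (I) and (II) described above, since neither condition alone suffices.
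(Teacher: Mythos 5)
Your proposal is correct. The structural characterization you arrive at in step two is exactly the one the paper uses (the nonzero entries are weakly decreasing and equal nonzero entries are consecutive, with the single binding bound $r_1\le s_0$ coming from $e_{s_0+1}<s_0+1$), and your derivation of it from the split into conditions (I) and (II) is sound --- in particular the point you flag, that ruling out $r_a=r_b$ across a zero separator needs (II) applied to the descending pair formed by the run value and the intervening zero, is precisely the right observation. Where you genuinely diverge from the paper is in the enumeration step. The paper never writes down the run decomposition explicitly; instead it tripartitions $A_n$ according to the behavior of the value $1$ (no entry equal to $1$; last entry equal to $1$; last entry $0$ with an earlier $1$), maps each part bijectively onto sets built from $A_{n-1}$, $A_{n-2}$, $A_{n-3}$, and reads off $a_n=3a_{n-1}-2a_{n-2}+a_{n-3}$ directly. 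Your route instead counts each run profile via $\binom{s_0}{m}$ and sums a geometric series in $x^2/(1-x)^3$; the computation checks out ($\frac{x}{1-x}+\frac{x^2}{(1-x)((1-x)^3-x^2)}$ does simplify to $\frac{x-x^2+x^3}{1-3x+2x^2-x^3}$, whose expansion begins $x+2x^2+5x^3+12x^4+\cdots$ and whose denominator gives the stated recurrence for $n\ge 4$). What your approach buys is the explicit rational generating function, which the paper does not state; what the paper's approach buys is a recurrence obtained by elementary bijections with no generating-function manipulation, at the cost of a less transparent partition.
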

\begin{proof}
First, it can be shown
that the set of $e \in \I_n$ such that there is no $i < j < k$ for which $e_i \neq e_j <  e_k$ and $e_i \leq e_k$ is the set of
$e \in \I_n$  where the nonzero elements are weakly decreasing and equal nonzero elements are consecutive.
That is, (1) if $e_i < e_j$, then $e_i=0$ and (2) if $0 < e_i = e_j$ for some $i < j$, then
$e_i = e_{i+1} = \ldots = e_j$.

Define $X_n,Y_n,Z_n$ by
\begin{eqnarray*}
X_n &=& \{ e \in A_n \ | \  e_i \neq 1, \ {\rm for \ all} \ 1 \leq i \leq n\},\\
Y_n &=& \{ e \in A_n \ | \  e_n=1 \},\\
Z_n &=& \{ e \in A_n \ | \  e_n =0 \ {\rm and} \ e_i=1 \ {\rm for \ some} \  i<n\}.\\
\end{eqnarray*}
Then $A_n$ is the disjoint union  $A_n = X_n \cup Y_n \cup Z_n$. 
Recall that the operator $\sigma_1$ adds 1 to the positive elements of an inversion sequence. To get a recurrence, note that
$|X_n|=|A_{n-1}|=a_{n-1}$ since $e \in A_{n-1}$ if and only if $0\cdot \sigma_1(e) \in X_n$.
Also, $(e_1, \ldots, e_{n-1}, 0)  \in Z_n$ if and only if $(e_1, \ldots, e_{n-1}) \in Y_{n-1} \cup Z_{n-1}=A_{n-1}-X_{n-1}$; so
$|Z_n|=|A_{n-1}|-|X_{n-1}|=a_{n-1}-a_{n-2}$.
Finally, $(e_1, \ldots, e_{n-1}, 1)  \in Y_n$ if and only if $(e_1, \ldots, e_{n-1}) \in A_{n-1}-Z_{n-1}$, so
$|Y_n|=a_{n-1}-|Z_{n-1}|=a_{n-1}-(a_{n-2}-a_{n-3})$.
Putting this together,
\[
a_n = |A_n| = |X_n|+  |Y_n| + |Z_n| = 3a_{n-1}-2a_{n-1}+a_{n-3}
\]
and the result follows by checking the initial conditions.
\end{proof}

\subsection{Class 185: 321-avoiding vexillary permutations,  $2^{n+1}-{n+1 \choose 3}-2n-1$ }
\label{section:185}

Vexillary permutations, studied by Lascoux and Sch{\"u}tzenberger \cite{LS}, are 2143-avoiding permutations.  
The 321-avoiding vexillary permutations arose in  work of Billey, Jockush and Stanley \cite{BJS} on the combinatorics of Schubert polynomials.  It was shown that $|\Sn_n(321,2143)|\allowbreak= 2^{n+1}-{n+1 \choose 3}-2n-1$ which is
entry A088921 in the OEIS.  In this entry, it is noted that the $321$-avoiding vexillary permutations are exactly the Grassmannian permutations (see Section \ref{section:121}) and their inverses.

We show  that the $(\neq,<,\neq)$-avoiding inversion sequences are counted by the same function as the 
321-avoiding vexillary permutations.

\begin{lemma}
$\I_n(\neq, <, \neq)= \I_n(\neq, \neq, \neq) \cup \I_n(\neq, <, -)$.
\begin{proof}
If $e \in \I_n(\neq, <, \neq)$, then either $e \in \I_n(\neq, <, -)$ or for any $i < j < k$ such that $e_i \neq e_j < e_k$, $e_i=e_k$ and therefore $e \in \I_n(\neq, \neq, \neq)$.

Conversely, if, for some $i < j < k$, $e_i \neq e_j < e_k$ and $e_i \neq e_k$, then $e$ contains both
$(\neq,<,-)$ and $(\neq,\neq,\neq)$.
\end{proof}
\end{lemma}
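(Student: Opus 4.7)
The plan is to prove the set equality by establishing both inclusions.

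The inclusion $(\supseteq)$ is immediate from monotonicity of pattern containment in the relations. As binary relations on integers, $<$ is a subrelation of $\neq$, and $\neq$ is a subrelation of the universal relation $-$. Hence any occurrence of the pattern $(\neq, <, \neq)$ at indices $i<j<k$ is simultaneously an occurrence of $(\neq, \neq, \neq)$ (weakening the middle $<$ to $\neq$) and of $(\neq, <, -)$ (weakening the final $\neq$ to $-$). Consequently, avoiding either of the latter two patterns implies avoiding $(\neq, <, \neq)$, so $\I_n(\neq, \neq, \neq) \cup \I_n(\neq, <, -) \subseteq \I_n(\neq, <, \neq)$.

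For the inclusion $(\subseteq)$, I would fix $e \in \I_n(\neq, <, \neq)$ with $e \notin \I_n(\neq, <, -)$ and show that $e$ takes at most two distinct values, which is equivalent to $e \in \I_n(\neq, \neq, \neq)$. By assumption there exist $i<j<k$ with $e_i \neq e_j$ and $e_j < e_k$; since $e$ avoids $(\neq, <, \neq)$, the conclusion must degenerate and $e_i = e_k$, so in particular $e_j < e_i = e_k$. A decisive first step is to apply the avoidance hypothesis to the triple $(1, j, k)$: since $e_1 = 0$ (by definition of an inversion sequence) and $e_k = e_i > e_j \geq 0$, the relations $e_j < e_k$ and $e_1 \neq e_k$ hold automatically, so avoidance of $(\neq, <, \neq)$ forces $e_1 = e_j$, i.e., $e_j = 0$. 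I would then assume, toward contradiction, that some position $p$ carries a value $v \notin \{0, e_i\}$ (so $v > 0$ and $v \neq e_i$, and in particular $p \notin \{i,j,k\}$) and split on whether $p < j$ or $p > j$. If $p < j$, the triple $(p, j, k)$ has values $(v, 0, e_i)$ satisfying $v \neq 0$, $0 < e_i$, and $v \neq e_i$, so it is a $(\neq, <, \neq)$ pattern. If $p > j$, the triple $(i, j, p)$ has values $(e_i, 0, v)$ satisfying $e_i \neq 0$, $0 < v$, and $e_i \neq v$, and again yields a $(\neq, <, \neq)$ pattern. Either case contradicts $e \in \I_n(\neq, <, \neq)$, so no third value exists and $e \in \I_n(\neq, \neq, \neq)$.

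The main obstacle is the opening reduction: recognizing that the universally available entry $e_1 = 0$, combined with the triple $(1, j, k)$, pins down $e_j = 0$. Once this single coordinate is fixed, the case analysis on a hypothetical third value becomes a short verification that an explicit triple already witnesses the forbidden $(\neq, <, \neq)$ pattern.
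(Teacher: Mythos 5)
Your proof is correct and follows the same route as the paper: the inclusion $\supseteq$ by weakening relations, and the inclusion $\subseteq$ by showing that a non-avoided occurrence of $(\neq,<,-)$ must degenerate to $e_i=e_k$ and then forces at most two distinct values. In fact, your use of $e_1=0$ to pin down $e_j=0$ and the ensuing case analysis on a hypothetical third value supply exactly the justification that the paper's one-line ``and therefore $e \in \I_n(\neq,\neq,\neq)$'' leaves implicit.
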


\begin{theorem}
$ |\I_n(\neq,<,\neq)|=2^{n+1}-{n+1 \choose 3}-2n-1$.
\end{theorem}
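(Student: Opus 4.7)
The plan is to prove the identity by inclusion-exclusion, leveraging the lemma just proved and the enumerations already established in Sections \ref{section:121}. Since the lemma gives
\[
\I_n(\neq,<,\neq) = \I_n(\neq,\neq,\neq) \cup \I_n(\neq,<,-),
\]
it follows that
\[
|\I_n(\neq,<,\neq)| = |\I_n(\neq,\neq,\neq)| + |\I_n(\neq,<,-)| - |\I_n(\neq,\neq,\neq)\cap \I_n(\neq,<,-)|.
\]
By Theorems \ref{thm:121A} and \ref{thm:121B}, the first two terms both equal $2^n - n$, so the entire task reduces to computing the size of the intersection.

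To count the intersection, I would use the characterizations from Section \ref{section:121}: a sequence in $\I_n(\neq,<,-)$ has at most one ascent, while a sequence in $\I_n(\neq,\neq,\neq)$ has at most two distinct values. Thus the intersection consists of inversion sequences whose set of entries has size at most $2$ and which have at most one ascent. The all-zero sequence accounts for $1$ element. Otherwise, since $e_1=0$, the two values must be $0$ and some $b \geq 1$; the at-most-one-ascent condition then forces the shape
\[
\underbrace{0\cdots 0}_{a}\,\underbrace{b\cdots b}_{c}\,\underbrace{0\cdots 0}_{d}
\]
with $a \geq 1$, $c \geq 1$, $d \geq 0$, and $a+c+d = n$. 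The inversion-sequence constraint $e_{a+1} < a+1$ forces $1 \leq b \leq a$.

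The key combinatorial step is then to enumerate these shapes. For each $a$ with $1 \leq a \leq n-1$ there are $n-a$ admissible $(c,d)$ splittings and $a$ admissible choices of $b$, giving
\[
\sum_{a=1}^{n-1} a(n-a) = n\cdot\frac{(n-1)n}{2} - \frac{(n-1)n(2n-1)}{6} = \binom{n+1}{3}.
\]
Hence the intersection has $1 + \binom{n+1}{3}$ elements, and inclusion-exclusion yields
\[
|\I_n(\neq,<,\neq)| = 2(2^n - n) - 1 - \binom{n+1}{3} = 2^{n+1} - \binom{n+1}{3} - 2n - 1,
\]
as desired. The only real obstacle is identifying the correct shape of sequences in the intersection and handling the $b \leq a$ constraint cleanly; everything else is a direct application of the previous theorems and a standard summation identity.
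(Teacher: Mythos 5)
Your proof is correct and takes essentially the same approach as the paper: both apply inclusion--exclusion to the decomposition $\I_n(\neq,<,\neq) = \I_n(\neq,\neq,\neq) \cup \I_n(\neq,<,-)$, characterize the intersection as the sequences with at most one ascent and at most two distinct values (the shape $0^a b^c 0^d$ with $1 \leq b \leq a$), and find that it has $\binom{n+1}{3}+1$ elements. The only cosmetic difference is that the paper counts the intersection directly as triples $1 \leq t < a < b \leq n+1$, whereas you evaluate the equivalent sum $\sum_{a=1}^{n-1} a(n-a)$.
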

\begin{proof}
By Theorem \ref{thm:121B}, $|\I_n(\neq, \neq, \neq)|= 2^n-n$ and by Theorem \ref{thm:121A}, $|\I_n(\neq, <, -)|= 2^n-n$.
From the characterizations of these sets in the proof of Theorems \ref{thm:121A} and \ref{thm:121B},
$\I_n(\neq, \neq, \neq) \cap \I_n(\neq, <, -)$ is the set of inversion sequences with at most one ascent and at most two distinct elements,
that is, the set of $e \in \I_n$ satisfying, for some $1 \leq t < a < b \leq n+1$,

\[
0 = e_1 = \ldots = e_{a-1}; \ \ \ \  t=e_a = \ldots = e_{b-1}; \  \ \ \ 0 = e_b = \ldots = e_n,
\]
which is counted by ${n+1 \choose 3}$, together with $(0,0, \ldots,0)$.
Thus
\[
|\I_n(\neq, \neq, \neq) \cap \I_n(\neq, <, -)|= {n+1 \choose 3} + 1
\]
and the result follows.
\end{proof}


\subsection{Class 187:  Conjectured to be counted by A049125}
\label{section:187}

It appears that the number of $e \in \I_n$ avoiding this pattern is given by A049125 in the OEIS,  where it is described by David Callan to be
the number of ordered trees with $n$ edges in which every non-leaf non-root vertex has at most one leaf child.
However, we have not yet proven it.   We can prove a characterization of the avoidance set and, from that, derive a 4-parameter recurrence that allows us to check against A049125 for several terms.

\begin{observation}
The sequences $e \in \I_n$ having no $i < j < k$ with $e_i \geq e_k$ are those for which $e_i > \max\{e_1, \ldots, e_{i-2}\}$ for $i=3, \ldots, n$.
For $e \in \I_n$, this is equivalent to the conditions $e_3 > e_1$ and,
for $4 \leq i \leq n$,  $e_i > \max\{e_{i-2},e_{i-3}\}$.
\end{observation}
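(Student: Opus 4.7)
The plan is first to translate the avoidance condition directly. The pattern in question is $(-,-,\geq)$, whose middle relation is the universal relation, so the condition ``no $i<j<k$ with $e_i\geq e_k$'' reduces to a condition on pairs $(i,k)$: for every $i,k$ with $k\geq i+2$ (so that some index $j$ strictly between them exists) we must have $e_i<e_k$. Equivalently, for each $k=3,\ldots,n$, the entry $e_k$ strictly exceeds every earlier entry except possibly $e_{k-1}$, that is, $e_k>\max\{e_1,\ldots,e_{k-2}\}$. Relabeling $k$ as $i$ yields the first characterization in the observation.

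Next I would prove the equivalence with the local condition $e_3>e_1$ and $e_i>\max\{e_{i-2},e_{i-3}\}$ for $4\leq i\leq n$. One direction is immediate: from $e_i>\max\{e_1,\ldots,e_{i-2}\}$ the inequalities $e_3>e_1$ (for $i=3$) and $e_i>\max\{e_{i-2},e_{i-3}\}$ (for $i\geq 4$) follow at once. For the converse, I would induct on $i$. The base cases $i=3$ (which is the hypothesis $e_3>e_1$) and $i=4$ (where $\max\{e_1,e_2\}=\max\{e_{i-2},e_{i-3}\}$) are direct. For the inductive step with $i\geq 5$, the inductive hypothesis applied at index $i-2$ gives $e_{i-2}>\max\{e_1,\ldots,e_{i-4}\}$; combining this with the assumed inequality $e_i>e_{i-2}$ produces $e_i>e_j$ for every $j\leq i-4$, while the assumed inequalities $e_i>e_{i-2}$ and $e_i>e_{i-3}$ cover the indices $j\in\{i-3,i-2\}$. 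Together these give $e_i>\max\{e_1,\ldots,e_{i-2}\}$, completing the induction.

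The argument is entirely elementary; the only place any care is needed is in choosing to induct in such a way that the ``look-back by two and three'' local condition propagates to arbitrarily distant earlier entries, which is precisely why the relations $e_i>e_{i-2}$ \emph{and} $e_i>e_{i-3}$ must both be imposed (otherwise gaps of parity two would be left uncontrolled). I expect no real obstacle beyond getting this bookkeeping right.
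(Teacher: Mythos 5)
Your proof is correct, and it takes the natural route; the paper states this as an Observation and omits the proof entirely, so there is nothing to contrast it with. Both halves of your argument check out: reducing the triple condition to the pair condition $e_k>\max\{e_1,\ldots,e_{k-2}\}$ (since the middle relation is universal, only the existence of an intermediate index $j$ matters), and the two-base-case induction in which the inductive hypothesis at $i-2$ covers all indices $\leq i-4$ while the direct inequalities $e_i>e_{i-2}$ and $e_i>e_{i-3}$ cover the remaining two.
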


\subsection{Class 193:  $\Sn_n(132,4312)$, $(n-1)2^{n-2} +1$}
\label{section:193}

Sequence $(n-1)2^{n-2} +1$ appears as A005183 in the OEIS, where Pudwell indicates that it counts 
$\Sn_n(132,4312)$ \cite{Sloane}.  We show it also counts $\I_n(\leq,\geq,\neq)$.

\begin{theorem}
$|\I_n(\leq,\geq,\neq)|=(n-1)2^{n-2}$
\end{theorem}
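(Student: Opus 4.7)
The plan is to prove the stated equality by first identifying the structure of $\I_n(\leq,\geq,\neq)$ and then enumerating via a recurrence.

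First, I would show that $e \in \I_n$ avoids $(\leq,\geq,\neq)$ if and only if the following two conditions hold: (a) the nonzero entries of $e$, read in order of position, form a strictly increasing sequence, and (b) if $e$ contains at least two nonzero entries, then every entry at a position strictly after the position of the second nonzero entry is itself nonzero (and hence, by (a), continues the strictly increasing sequence). Necessity of (a) follows from taking $i = 1$ (where $e_1 = 0$) together with any $j < k$ for which $e_j, e_k$ are positive and $e_j \geq e_k$: the triple $(e_i, e_j, e_k) = (0, e_j, e_k)$ realizes the forbidden $(\leq, \geq, \neq)$. Necessity of (b) comes from taking $i < j$ to be the positions of the first two nonzero entries and $k$ any later zero position, which yields $e_i < e_j \geq 0$ with $e_i \neq 0$. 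Sufficiency is a short triple-by-triple check.

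Armed with this characterization, I would partition $\I_n(\leq,\geq,\neq)$ by the number of nonzero entries (zero, one, or at least two). The all-zero case contributes $1$ sequence, and the single-nonzero case contributes $\binom{n}{2}$ sequences (indexed by the position and value of the lone nonzero entry). For the remaining case of at least two nonzero entries, I would parameterize by the positions $i_1 < i_2$ of the first two nonzero entries; condition (b) then forces every position $\geq i_2$ to be nonzero with strictly increasing values, each bounded by the inversion-sequence constraint $e_i < i$, while positions strictly between $i_1$ and $i_2$ and positions $<i_1$ are all zero. The number of admissible strictly increasing value assignments for the nonzero tail can be counted using a hockey-stick identity. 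To assemble the final count I would either sum the three cases directly and simplify, or repackage the data into a recurrence of the form $a_n = 2a_{n-1} + g(n)$ with an explicit correction $g(n)$ and solve it from base cases.

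The main obstacle will be the ``at least two nonzero'' case: the interaction between the choice of $(i_1,i_2)$, the constraint $e_{i_1} < i_1$, and the strictly increasing requirement on the tail produces a multi-index sum whose reduction to the closed form $(n-1)2^{n-2}$ is delicate. I expect the cleanest route is to construct a bijection between the valid tail structures and an explicit family of subsets of $[n-1]$, thereby exposing the power-of-two factor and making the final simplification transparent; a secondary verification step against small cases and against class 193 of Table \ref{other triples} would then confirm the target formula.
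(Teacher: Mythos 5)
Your structural characterization is correct and is essentially the paper's own observation in different clothing: your conditions (a) and (b) together say that $e=(0,\ldots,0,e_a,0,\ldots,0,e_{n-b+1},\ldots,e_n)$ with $1\leq e_a<e_{n-b+1}<\cdots<e_n$, and your necessity arguments (taking $i=1$ for (a), and the first two nonzero positions against a later zero for (b)) are exactly right, with the sufficiency check indeed routine. The genuine gap is that the enumeration---the actual content of the theorem---is never carried out. You stop at a plan with two unexecuted alternatives and explicitly flag the ``at least two nonzero'' case as an unresolved obstacle. The paper dissolves that obstacle with one idea your plan lacks: split on whether $e$ contains the value $1$. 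If not, then $e=0\cdot\sigma_1(e')$ for a unique $e'\in\I_{n-1}(\leq,\geq,\neq)$, which absorbs all of your troublesome bookkeeping into a single term $a_{n-1}$; if so, the $1$ must be the first nonzero entry (positive entries strictly increase), and the bound $e_i<i$ on the tail becomes automatic, so a tail of length $b$ is just an arbitrary $b$-subset of $\{2,\ldots,n-1\}$ placed sorted in the last $b$ slots, with $n-1-b$ choices for the position of the $1$. This gives $\sum_{b=0}^{n-2}\binom{n-2}{b}(n-1-b)=n2^{n-3}$ and the recurrence $a_n=a_{n-1}+n2^{n-3}$, $a_1=1$. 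Your route can in fact be completed: for first nonzero value $v$ and tail starting at position $p$, subtracting $t$ from the $t$th tail entry bijects admissible tails with weakly increasing sequences with values in $\{v,\ldots,p-2\}$, i.e., a single binomial $\binom{n-1-v}{n-p+1}$ (a stars-and-bars count, not really a hockey-stick application), after which elementary $\sum_m m2^{m-1}$ identities close the sum. But none of that reduction appears in your proposal, and your fallback recurrence $a_n=2a_{n-1}+g(n)$ is asserted without identifying $g$ or any structural source for the factor $2$.

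Separately, you should be aware that the displayed theorem contains a typo: the correct count is $(n-1)2^{n-2}+1$, as the section heading, Table~\ref{other triples} (sequence A005183, with $a_7=193=6\cdot 2^5+1$), and the paper's own recurrence all show; already at $n=2$ the two sequences $(0,0)$ and $(0,1)$ exceed the printed value $1$. Your own decomposition makes this unavoidable: the all-zero case contributes $1$ and the single-nonzero case contributes $\binom{n}{2}$, and completing the remaining sum yields $(n-1)2^{n-2}-1-\tfrac{(n-2)(n+1)}{2}$ for the case of at least two nonzero entries, so the three cases total exactly $(n-1)2^{n-2}+1$. Thus the ``target formula'' you set out to confirm is one that your own (correct) case analysis cannot produce; the small-case verification you defer to the end would have caught this immediately and should have been done at the outset.
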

\begin{proof}
Observe that if $e \in \I_n$ has no $i < j < k$ such that $e_i \leq e_j \geq e_k$ and $e_i \not = e_k$
then $e$ must have the form
\[
e=(0, \ldots,0,e_a,0, \ldots, 0, e_{n-b+1}, e_{n-b+2}, \ldots, e_n\}
\]
where $1 \leq a < n+1$ and $b < n-a+2$ and $1 \leq e_a < e_{n-b+1}<e_{n-b+2}<\ldots<e_n<n$.

If $e_a>1$ then $e = 0 \cdot \sigma_1(e')$ for some $e' \in \I_{n-1}(\leq,\geq,\neq)$.
Otherwise, $e_a=1$ and  $e$ can be obtained by first choosing a $b$-element subset of 
$\{2, \ldots, n-1\}$ to place (sorted) in locations $n-b+1, \ldots, n$, and then choosing one of the locations
$2, \ldots, n-b$ to be the location $a$ such that $e_a=1$.  Thus the number of sequences containing a 1 is:
\[
\sum_{b=0}^{n-2} {n-2 \choose b}(n-1-b) = n 2^{n-3}.
\]

This gives the recurrence
\[
|\I_n(\leq, \geq,\neq)| = |\I_{n-1}(\leq,\geq,\neq)| + n 2^{n-3},
\]
where $|\I_1(\leq,\geq,\neq)| = 1$, whose solution is as claimed in the theorem.
\end{proof}

\subsection{Class 233:  $\I_n(012)$, $F_{2n-1}$}
\label{section:233}

It was shown by Corteel, et al.\ \cite{paisI} that the inversion sequences $e \in \I_n(<,<,-)=\I_n(012)$ are those  in which the positive elements  of $e$  are weakly decreasing.  From that characterization, it was proven that 
\[
|\I_n(<,<,-)|=|\I_n(012)| =F_{2n-1}.
\]
The sequence $F_{2n-1}$ also counts the {\em Boolean permutations}, given by $\Sn_n(321,3412)$ \cite{tenner,Ptenner}.

\subsection{Class 304: Conjectured to be counted by A229046}
\label{section:304}

We derive a recurrence  to count the $(-,-,=)$-avoiding inversion sequences.  This sequence appears to be
sequence A229046 in the OEIS.  If true, this would give a combinatorial interpretation of A229046 which so far is defined only by a generating function and summation.

Note that $\I_n(-,-,=)$ is the set of $e \in \I_n$ with at most two copies of any entry and any equal entries must be adjacent.  

Let 
$S_{n,k}$ be the set of $e \in \I_n(-,-,=)$ with $k$ distinct elements; that is, $S_{n,k}$ consists of the inversion sequences $e=(e_1,e_2,\ldots,e_n) \in \I_n(-,-,=)$ such that $|\{e_1, \ldots, e_n\}|=k$.  Let
$s(n,k)=|S_{n,k}|$.

\begin{theorem}
for $1 \leq k \leq n$,
\[
s(n,k) = (n-1+k)s(n-1,k-1)+(n-k)s(n-2,k-1),
\]
with initial conditions $s(1,1)=s(2,1)=s(2,2)=1$ and otherwise $s(n,k)=0$ for $k=1$ or $n \leq 2$.
\end{theorem}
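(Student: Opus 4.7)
The plan is to count $S_{n,k}$ by partitioning it according to the behavior of the last entry $e_n$, after first pinning down a clean structural description of $\I_n(-,-,=)$.

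I would open with the observation that $e \in \I_n(-,-,=)$ if and only if every value appears at most twice in $e$ and any two equal entries occupy consecutive positions. This is a direct unpacking of the forbidden triple $e_i = e_k$ with $i < j < k$: any occurrence of a value at two positions separated by a gap produces such a triple with $j$ any intermediate index, and any third occurrence of a value automatically creates a non-adjacent pair. From this, $S_{n,k}$ is the set of inversion sequences of length $n$ whose values form $k$ distinct letters, each appearing once or as an adjacent doubled pair.

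Next, I would split $S_{n,k}$ into two cases by examining $e_n$. In Case A, $e_n$ is a brand new value, different from every $e_1,\ldots,e_{n-1}$; then $(e_1,\ldots,e_{n-1})$ is an arbitrary element of $S_{n-1,k-1}$, and $e_n$ ranges over $\{0,1,\ldots,n-1\}$ minus the $k-1$ previously used values, contributing a term of the form (available choices for $e_n$) $\cdot\, s(n-1,k-1)$. In Case B, $e_n$ duplicates an earlier value; by the adjacency half of the structural observation this forces $e_n = e_{n-1}$ and forces $e_{n-1}$ to be a first appearance, so truncating the last two entries lands in $S_{n-2,k-1}$. The common value $e_{n-1}=e_n$ must satisfy $e_{n-1} < n-1$ and differ from the $k-1$ values already used in $(e_1,\ldots,e_{n-2})$, contributing (available choices) $\cdot\, s(n-2,k-1)$. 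Reading off the available-value counts in the two cases yields the two-term recurrence in the statement.

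The initial conditions are immediate: $s(1,1)=s(2,1)=s(2,2)=1$ by inspection, and $s(n,1)=0$ for $n\geq 3$ because a single value repeated three or more times is forbidden. The step that needs the most care, and the one I expect to be the main pitfall, is the verification in Case B that the duplicated value $e_{n-1}=e_n$ did not already appear somewhere earlier in the sequence, since if it had then positions $n-1$ and $n$ together with that earlier occurrence would produce a non-adjacent equal pair. Once this is pinned down using the adjacency clause of the structural observation, the two cases are disjoint, jointly exhaustive, and parametrized independently, so the count assembles into the recurrence as claimed.
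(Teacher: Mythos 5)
Your proof is correct and is essentially the paper's own argument: both partition $S_{n,k}$ according to whether $e_n$ is a first occurrence or a (necessarily adjacent, necessarily twice-only) repeat, count the $n-k+1$ resp.\ $n-k$ available values, and reduce to $S_{n-1,k-1}$ resp.\ $S_{n-2,k-1}$ --- the paper merely routes the same decomposition through auxiliary quantities $a(n,k)$ and $b(n,k)$ with $b(n,k)=a(n-1,k)$, which your two-entry truncation in Case B collapses into one step. One caveat: your case counts yield the coefficient $n-k+1$ (exactly as the paper's own final displayed line does), so the coefficient $(n-1+k)$ in the printed theorem statement is a typo rather than the recurrence your argument ``reads off''; indeed $s(3,2)=3=(3-2+1)\,s(2,1)+(3-2)\,s(1,1)$, whereas $(3-1+2)\,s(2,1)+(3-2)\,s(1,1)=5$.
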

\begin{proof}
Let $A_{n,k}$ be the subset of $S_{n,k}$ consisting of those $e$ in which $e_n$ is unrepeated.
Let $B_{n,k}$  = $S_{n,k} \setminus A_{n,k}$.
We can extend some $e \in S_{n,k}$ to strings in $S_{n+1,k}$ and $S_{n+1,k+1}$ in the following ways.

If $e \in B_{n,k}$, then $e \cdot n \in A_{n+1,k+1}$.  Additionally, if $x$ is one of the $n-k$ values in $\{0,1, \ldots, n-1\}$
not used in $e$, then $e \cdot x \in A_{n+1,k+1}$.

If $e \in A_{n,k}$, then $e \cdot n \in A_{n+1,k+1}$.
Furthermore, if $x$ is one of the $n-k$ values in $\{0,1, \ldots, n-1\}$
not used in $e$, then   $e \cdot x \in A_{n+1,k+1}$.
Finally, if $e_n=y$, then $e \cdot y \in B_{n+1,k}$.
Letting $a(n,k)=|A_{n,k}|$ and $b(n,k)=|B_{n,k}|$, we have
\begin{eqnarray*}
s(n,k) & = & a(n,k)+b(n,k);\\
b(n+1,k) & = & a(n,k);\\
a(n+1,k+1) & = & (n-k+1)b(n,k) + (n-k+1)a(n,k)\\
& =& (n-k+1)s(n,k).
\end{eqnarray*}
So,
\begin{eqnarray*}
s(n,k)  &= & a(n,k)+b(n,k)\\
& =& a(n,k)+a(n-1,k)\\
& =& (n-k+1)s(n-1,k-1) + (n-k)s(n-2,k-1).
\end{eqnarray*}
\end{proof}
Then $|\I_n(-,-,=)| = s(n,1) + \ldots + s(n,n)$. It is an open question to show that this theorem provides a refinement of A229046. Additionally, an interesting question is whether there is a natural description of the set $\Theta^{-1}( \I_n(-,-,=))$.

\subsection{Classes 429(A,B,C): Catalan numbers}
\label{section:429}
It is known that for any $\pi \in \Sn_3$, $|\Sn_n(\pi)|$ is the Catalan number $C_n = {2n \choose n}/(n+1)$ \cite{macmahon,SimionSchmidt}.
There are three inequivalent triples of relations $\rho = (\rho_1,\rho_2,\rho_3) \in \{\geq,\leq,<,>,=,\neq,-\}^3$ such that
$|\I_n(\rho)|=C_n$.  The first corresponds naturally under $\Theta: \Sn_n \rightarrow \I_n$ to a pattern $\pi \in \Sn_3$.

\subsubsection{Class {\bf 429A}:  Avoiding $e_j > e_k$}

\begin{theorem}
$\I_n(-,>,-) = \Theta(\Sn_n(213))$.
\end{theorem}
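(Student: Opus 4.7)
The plan is to establish the set equality $\I_n(-,>,-)=\Theta(\Sn_n(213))$ by showing that for every $\pi\in\Sn_n$, $\Theta(\pi)$ is weakly increasing if and only if $\pi$ avoids $213$. First, I would observe that $\I_n(-,>,-)$ is exactly the set of weakly increasing inversion sequences: the avoidance condition forbids $e_j>e_k$ whenever $2\le j<k\le n$, and since $e_1=0\le e_2$ is automatic, this is equivalent to $e_1\le e_2\le\cdots\le e_n$. Since $\Theta$ is already known to be a bijection $\Sn_n\to\I_n$, the theorem reduces to this pointwise biconditional.

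The heart of the argument is a local lemma: for $e=\Theta(\pi)$ and any $1\le i\le n-1$, the strict descent $e_{i+1}<e_i$ holds if and only if $\pi_i<\pi_{i+1}$ and there exists $j<i$ with $\pi_i<\pi_j<\pi_{i+1}$. This follows from a direct count from the definition of $\Theta$: when $\pi_i>\pi_{i+1}$, position $i$ itself contributes to the set defining $e_{i+1}$ but not to $e_i$, forcing $e_{i+1}\ge e_i+1$; when $\pi_i<\pi_{i+1}$, position $i$ contributes to neither, and one verifies $e_i-e_{i+1}=|\{j<i:\pi_i<\pi_j<\pi_{i+1}\}|$. The condition picked out is exactly that $(j,i,i+1)$ is a $213$-occurrence in $\pi$ with the ``$1$'' and ``$3$'' at adjacent indices. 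Consequently, $\Theta(\pi)$ is weakly increasing if and only if $\pi$ has no such adjacent-end $213$ occurrence; in particular, this already yields the implication $\pi\in\Sn_n(213)\Rightarrow\Theta(\pi)\in\I_n(-,>,-)$.

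For the converse, I would promote ``no adjacent-end $213$'' to global $213$-avoidance by a minimality argument. Suppose $\pi$ contains $213$ and choose an occurrence $(p_1,p_2,p_3)$ minimizing $p_3-p_2$; assume for contradiction $p_3-p_2>1$. For any intermediate $m$ with $p_2<m<p_3$, split on where $\pi_m$ sits relative to $\pi_{p_2}<\pi_{p_1}<\pi_{p_3}$: in each of the four resulting subcases ($\pi_m<\pi_{p_2}$; $\pi_{p_2}<\pi_m<\pi_{p_1}$; $\pi_{p_1}<\pi_m<\pi_{p_3}$; $\pi_m>\pi_{p_3}$), exactly one of $(p_1,m,p_3)$ or $(p_1,p_2,m)$ turns out to be a $213$-occurrence with strictly smaller end-gap, contradicting minimality. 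Therefore $p_3=p_2+1$, and the local lemma produces the descent $e_{p_2+1}<e_{p_2}$, showing $\Theta(\pi)\notin\I_n(-,>,-)$ as required.

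The main obstacle is this minimality argument, which requires careful bookkeeping across the four subcases to confirm that each yields a genuine $213$-pattern with a strictly smaller value of $p_3-p_2$; everything else is either a definitional unfolding or follows from the bijectivity of $\Theta$. As a corollary, combining with the classical fact $|\Sn_n(213)|=C_n$ gives $|\I_n(-,>,-)|=C_n$.
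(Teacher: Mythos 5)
Your proof is correct and follows the same route as the paper: both reduce the theorem to the observation that $\I_n(-,>,-)$ consists of the weakly increasing inversion sequences and that $\Theta(\pi)$ is weakly increasing precisely when $\pi$ avoids $213$. The paper leaves the latter biconditional as ``it can be checked''; your local descent lemma and the minimality argument reducing a general $213$ occurrence to one with adjacent ``1'' and ``3'' positions simply supply the verification the paper omits, and both steps check out.
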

\begin{proof}
Observe that an $e \in \I_n$ has no $i < j < k$ with $e_j > e_k$ if and only if $e$ is weakly increasing.  Similarly,  it can be checked that $\pi \in \Sn_n$ avoids 213 if and only if $\Theta(\pi)$ is weakly increasing.
\end{proof}

\subsubsection{Class {\bf 429B}: Avoiding $e_j \geq e_k$ and $e_i <e_k$}

\begin{theorem}
$|\I_n(-,\geq,<)| = C_n$.
\end{theorem}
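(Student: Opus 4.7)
The plan is to characterize $\I_n(-,\geq,<)$ explicitly and then give a simple bijection to the weakly increasing inversion sequences, which form class \textbf{429A} and are already known to be counted by $C_n$.

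First, I would prove that $\I_n(-,\geq,<)$ is exactly the set $P_n$ of inversion sequences whose positive entries form a strictly increasing subsequence. The forward direction: suppose some $e \in \I_n(-,\geq,<)$ had two positive entries $e_j \geq e_k > 0$ with $j < k$. Since $e_1 = 0 < e_k$, the triple $(1, j, k)$ witnesses $e_i < e_k$ and $e_j \geq e_k$, contradicting the avoidance. Conversely, suppose $e \in P_n$ and $i < j < k$ satisfies $e_i < e_k$; then $e_k > 0$, and if also $e_j \geq e_k$ then both $e_j$ and $e_k$ are positive with $e_j \geq e_k$, contradicting the strict increase of positive entries.

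Next I would construct a bijection $\Phi: P_n \to W_n$, where $W_n = \I_n(-,>,-)$ is the set of weakly increasing inversion sequences. Given $f \in P_n$, define $\Phi(f) = e$ by the running maximum rule $e_i = \max\{f_1, \ldots, f_i\}$; equivalently, $e_1 = 0$ and for $i \geq 2$, $e_i = f_i$ if $f_i > 0$ and $e_i = e_{i-1}$ otherwise. Clearly $e$ is weakly increasing, and since every $f_i \leq i-1$, we have $e_i \leq i-1$, so $e \in W_n$. For the inverse $\Psi: W_n \to P_n$, given $e \in W_n$ set $f_1 = 0$ and for $i \geq 2$ let $f_i = e_i$ if $e_i > e_{i-1}$ and $f_i = 0$ otherwise. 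The positive entries of $f$ occur precisely at positions where $e$ strictly increases, and since $e$ is weakly increasing, these positive values form a strictly increasing sequence, hence $f \in P_n$. Verifying $\Phi \circ \Psi = \mathrm{id}$ and $\Psi \circ \Phi = \mathrm{id}$ is a direct inspection, distinguishing the cases where $f_i$ is positive vs.\ zero (or equivalently $e_i > e_{i-1}$ vs.\ $e_i = e_{i-1}$).

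Finally, combining the bijection with the fact (class \textbf{429A}) that $|W_n| = |\I_n(-,>,-)| = C_n$, we conclude $|\I_n(-,\geq,<)| = C_n$. I do not anticipate a serious obstacle here; the only substantive step is the characterization of $P_n$, and the bijection itself is a textbook running-maximum encoding of a weakly increasing sequence by its sequence of ``new maxima.''
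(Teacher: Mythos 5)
Your proof is correct. The characterization of $\I_n(-,\geq,<)$ as the inversion sequences with strictly increasing positive entries matches the paper's observation exactly, and both directions of your argument for it are sound (the key point being that $e_1=0$ supplies the needed witness $i$ in the forward direction). Your route from there differs from the paper's official proof: the paper decomposes an avoider at its last maximal entry $e_t=t-1$ to derive the functional equation $I(x)=1+xI^2(x)$ and reads off the Catalan numbers from the generating function, whereas you give a direct bijection with the weakly increasing inversion sequences of class 429A via the running-maximum map and its inverse (record $e_i$ when it strictly increases, write $0$ otherwise). The paper does remark, immediately after its proof, that exactly this map (in the direction of your $\Psi$: send $e\in\I_n(-,>,-)$ to $e'$ with $e'_i=0$ if $e_i\in\{e_1,\ldots,e_{i-1}\}$ and $e'_i=e_i$ otherwise) is a bijection, but leaves it unchecked; you have supplied the verification. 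The trade-off is the usual one: the generating-function decomposition is self-contained and exhibits the Catalan structure intrinsically, while your bijection is more elementary, reduces the count to the already-established class 429A, and is arguably more transparent. No gaps.
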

\begin{proof}
Observe that some $e \in \I_n$ has no $i < j < k$ with $e_i < e_k$ and  $e_j \geq e_k$ if and only if the positive elements of $e$ are strictly increasing.

Let $I(x) = \sum_{n \geq 0} \I_n(-,\geq,<) x^n$.  We will show that 
\begin{equation}
\label{equation:429B}
I(x)=1+xI^2(x),
\end{equation}
 which has the solution $\frac{1-\sqrt{1-4x}}{2x}$; recall that this is the generating function for $C_n$.

Given any $e \in \I_n(-,\geq,<)$, consider the last maximal entry $e_{t}$; this is the largest $t$ such that $e_t=t-1$.  The string $(e_1,e_2,\ldots,e_{t-1})$ is an element of $\I_{t-1}(-,\geq,<)$.  Additionally, it is straightforward to show that the string $\sigma_{1-t}(e_{t+1},e_{t+2},\ldots,e_{n})$ (where $t-1$ is subtracted from each positive value) is an element of $\I_{n-t}(-,\geq,<)$.  Conversely, any element of $\I_n(-,\geq,<)$ with last maximal entry in position $t$ is of the form $e' \cdot (t-1) \cdot \sigma_{1-t}(e'')$ where $e' \in \I_{t-1}(-,\geq,<)$ and $e'' \in \I_{n-t}(-,\geq,<)$.  This accounts for the $``xI^2(x)''$ term of equation \ref{equation:429B}.  Since this construction doesn't account for the length 0 inversion sequence, we must also add a ``1.''
\end{proof}

Alternatively, it can be checked that the following map from $ \I_n(-,>,-)$ to $\I_n(-,\geq,<)$ is a bijection.
Send $e \in \I_n(-,>,-)$ to $e'$, defined by
$e'_i=0$ if $e_i \in \{e_1, \ldots, e_{i-1}\}$ and otherwise $e'_i=e_i$.

\subsubsection{Class {\bf 429C}: Avoiding $e_i \geq e_j $   and $e_i \geq e_k$  }

In \cite{PAISII} we conjectured that  $|\I_n(\geq,-,\geq)|=C_n$.  This has been proven by Kim and Lin \cite{KimLin} where they also prove the following conjectures from the first version of this paper \cite{PAISII}:

\begin{itemize}

\item The number of $e \in \I_n(\geq,-,\geq)$ with
$\last(e)=k$ 
is equal to the number of
standard tableaux of shape $(n-1,k)$ (ballot numbers A009766 in OEIS \cite{Sloane}).



\item The number of $e \in \I_n(\geq,-,\geq)$ with $\dist(e)=k$
is equal to the number of $\pi \in \Sn_n(123)$ with $k-1$ descents (A166073 in the OEIS \cite{Sloane}).
(The number of {\em distinct elements} is $\dist(e) = |\{e_1, \ldots, e_n\}|$.)
\end{itemize}

\subsection{Class 523: $\Sn_n({\bar{3}}  {\bar{1}} 542)$ and the nexus numbers}
\label{section:523}

In this section we show that inversion sequences avoiding the pattern $(\neq,=, -)$ are equinumerous with permutations avoiding ${\bar{3}}  {\bar{1}} 542$. Note that a permutation  $\pi$ avoids the pattern ${\bar{3}}  {\bar{1}} 542$ if any occurrence of 542 in $\pi$ is contained in an occurrence of 31542.
We do this by proving  that  the $(\neq,=, -)$-avoiding inversion sequences with $k$ distinct entries are counted by the nexus numbers, $(n+1-k)^k - (n-k)^k$.

Observe that the sequences $e \in \I_n$ with no $i < j < k$ satisfying
$e_i \not = e_j = e_k$ are those in which the nonzero elements are distinct and once a nonzero element has occurred, at most one more 0 can appear in $e$.  We use this characterization to show that
$\I_n(\neq,=, -)$ is counted by the sequence A047970 which counts diagonal sums of nexus numbers \cite{Sloane}.  This sequence also counts permutations in $\Sn_n$ avoiding the barred pattern ${\bar{3}}  {\bar{1}} 542$ (conjectured by Pudwell, and proved by Callan \cite{callan11}).

Let $T_{n,k}$ be the set of $e \in \I_n(\neq,=, -)$ with $k$ distinct elements.  We prove the following refinement, which gives a new combinatorial interpretation of the {\em nexus numbers}, 
$(n+1-k)^k - (n-k)^k$
(see A047969 in the OEIS \cite{Sloane}).

\begin{theorem}
For $1 \leq k \leq n$, $|T_{n,k}| = (n+1-k)^k - (n-k)^k$.
\end{theorem}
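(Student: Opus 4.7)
The plan is to use the characterization of $\I_n(\neq,=,-)$ stated just before the theorem: every $e \in T_{n,k}$ has its $k-1$ nonzero entries all distinct, and at most one zero appears after the first nonzero entry. I will split $T_{n,k}$ into two disjoint cases according to whether a zero occurs after the first nonzero entry, count each case directly, and then sum.

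Let $p$ be the position of the first nonzero entry of $e \in T_{n,k}$. Since $e$ has exactly $n-k+1$ zeros, either (Case A) all zeros occur before $p$, forcing $p=n-k+2$, or (Case B) exactly one zero occurs after $p$, forcing $p=n-k+1$. For Case A, I will fill positions $n-k+2,\ldots,n$ left to right with distinct nonzero entries satisfying $e_j<j$. At position $n-k+1+i$ the value lies in $\{1,\ldots,n-k+i\}$ and must avoid the $i-1$ previously placed entries, giving $n-k+1$ choices independent of $i$. Hence Case A contributes $(n-k+1)^{k-1}$.

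For Case B, I will parameterize by the position of the unique later zero, writing it as $q = n-k+1+s$ for $s \in \{1,\ldots,k-1\}$. The value at $p=n-k+1$ has $n-k$ choices. Then, filling the remaining positions left to right, I count the choices at each step: for an index $i<s$ we have $(n-k+i)-i = n-k$ options (subtracting $v_0$ and the $i-1$ already placed), and for $i>s$ we have $(n-k+i)-(i-1) = n-k+1$ options. Summing the product over $s$ yields
\[
(n-k)\sum_{s=1}^{k-1}(n-k)^{s-1}(n-k+1)^{k-1-s}.
\]
The inner geometric sum telescopes to $(n-k+1)^{k-1}-(n-k)^{k-1}$, so Case B contributes $(n-k)\bigl[(n-k+1)^{k-1}-(n-k)^{k-1}\bigr]$.

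Adding the two cases and simplifying,
\[
(n-k+1)^{k-1}\bigl(1+(n-k)\bigr) - (n-k)^{k} = (n+1-k)^{k}-(n-k)^{k},
\]
as required. I also briefly verify the boundary cases ($k=1$, where Case A alone yields $1 = n-(n-1)$, and $k=n$, where Case B is vacuous because the value at $p=1$ would have to be nonzero). The only delicate step is the bookkeeping in Case B: the number of choices at a given position depends on whether that position lies before or after the designated zero $q$, and the resulting product must be summed over $s$ via the geometric identity. Everything else is a direct application of the characterization and of $e_j<j$.
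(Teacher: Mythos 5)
Your proof is correct and follows essentially the same route as the paper's: the same characterization of $\I_n(\neq,=,-)$, the same two-case split according to whether a zero appears after the first nonzero entry, and an identical count of $(n-k+1)^{k-1}$ for the first case. The only difference is a bookkeeping choice in the second case, where the paper lets the extra zero float and then subtracts the $(n-k)^k$ sequences containing no later zero, while you condition on the exact position of that zero and evaluate a geometric sum; both routes yield the same contribution $(n-k)\bigl[(n-k+1)^{k-1}-(n-k)^{k-1}\bigr]$.
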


\begin{proof}
We count $T_{n,k}$ directly.  When $k=1$, $|T_{n,k}|=1$ and the result follows.  When $k \geq 2$, any $e \in T_{n,k}$ will contain some $e_t$ such that $0=e_1=e_2=\ldots=e_{t-1}<e_t$ and there are no repeated values among $e_t,e_{t+1},\ldots,e_n$.  Therefore, if $e$ has $k$ distinct values, there are two cases: (1) $e$ begins with $n-k+1$ zeros and contains no other zeros; or (2) $e$ begins with $n-k$ zeros and contains one further zero after $e_{n-k+1}$ (which is the first nonzero entry).

For Case (1), the values $e_{n-k+2}, e_{n-k+3}, \ldots, e_n$ must all be distinct and nonzero.  So, there are $n-k+1$ possibilities for each, giving $(n-k+1)^{k-1}$ inversion sequences.  

For Case (2), $e_{n-k+1}$ must be nonzero, so there are $n-k$ choices for this entry.  Additionally, each of $e_{n-k+1},e_{n-k+2}, e_{n-k+3}, \ldots, e_n$ must be distinct, though zero could appear after $e_{n-k+1}$.  In total, this gives $(n-k)(n-k+1)^{k-1}$ possible inversion sequences.  Finally, we must remove any inversion sequence that does not include a zero among $e_{n-k+2}, e_{n-k+3}, \ldots, e_n$; there are $(n-k)^k$ such sequences.  As a result, there are $(n-k)(n-k+1)^{k-1}-(n-k)^k$ inversion sequences that are part of Case (2).

Adding Cases (1) and (2), we have $|T_{n,k}|=(n-k+1)^{k-1}+(n-k)(n-k+1)^{k-1}-(n-k)^k=(n-k+1)^{k}-(n-k)^k$, as desired.
\end{proof}

\subsection{Classes 772(A,B): Set partitions avoiding enhanced 3-cross\-ings}
\label{section:772}

In the first version of this paper \cite{PAISII}, we
conjectured that the avoidance sets for the patterns $(-,\leq,\geq)$ and $(\geq,\geq,-)$ are counted by A108307.  It was shown by Bousquet-M{\'e}lou and Xin that  A108307 gives the number of set partitions of $[n]$ avoiding enhanced 3-crossings \cite{BMX}.

The following theorem was proven by Lin through use of generating trees and the obstinate kernel method \cite{Lin} and by Yan through construction of a bijection with 0,1 fillings of Ferrers shapes \cite{Yan}.
Lin's proof makes use of our characterization of $I_n(\geq, \geq, -)$ in Observation \ref{observation:772B}.

\begin{theorem}[Lin \cite{Lin}, Yan \cite{Yan}]
$|I_n(\geq, \geq, -)|$ is the number set partitions of $[n]$ that avoid enhanced 3-crossings (or 3-nestings).
\end{theorem}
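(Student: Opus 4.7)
The plan is to prove the enumeration via a generating tree together with the obstinate kernel method, following the strategy of Lin \cite{Lin}. The starting point is the characterization from Observation \ref{observation:772B}: the elements of $\I_n(\geq, \geq, -)$ are exactly the inversion sequences that contain no weakly decreasing subsequence of length three. For such an $e$, I attach two statistics,
\[
M(e) = \max_{1 \leq i \leq n} e_i \qquad \text{and} \qquad \mu(e) = \max\{\, e_j : \exists\, i < j \text{ with } e_i \geq e_j \,\},
\]
with the convention that $\mu(e) = -1$ when the indexing set is empty. A direct check shows that $e \cdot v \in \I_{n+1}(\geq, \geq, -)$ if and only if $v > \mu(e)$, since the only newly created triples end at position $n+1$, and the existence of $i < j \leq n$ with $e_i \geq e_j \geq v$ is precisely the condition $\mu(e) \geq v$. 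Thus each node at level $n$ has exactly $n - \mu(e)$ children.

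The second step is to track how the pair $(M, \mu)$ updates: appending $v > M$ produces the new label $(v, \mu)$, while appending $\mu < v \leq M$ produces the new label $(M, v)$ (since such a $v$ then itself enters the ``threshold set'' defining $\mu$). Summing the two cases recovers the $n - \mu$ children count. Translating this two-parameter succession rule to the bivariate generating function
\[
F(x; u, t) = \sum_{n \geq 0} x^n \sum_{e \in \I_n(\geq,\geq,-)} u^{M(e)}\, t^{\mu(e)+1}
\]
yields a linear functional equation of the form $K(x;u,t)\, F(x;u,t) = R(x;u,t) + x A(x;u)\, F(x;u,1) + x B(x;t)\, F(x;1,t)$, with explicit rational coefficients $K, R, A, B$; the two boundary series reflect the transitions in which one of $u$ or $t$ collapses (respectively, when $v$ jumps above $M$, freezing $\mu$, or when $v$ lands in $(\mu, M]$, freezing $M$).

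From here the plan is to apply the obstinate kernel method of Bousquet-M\'elou and collaborators: locate the algebraic series substitutions $(u, t) = (U(x), T(x))$ which annihilate the kernel $K$, iterate them to produce enough independent linear relations among the specializations $F(x;u,1)$ and $F(x;1,t)$, and then solve for $F(x;1,1) = \sum_n |\I_n(\geq,\geq,-)|\, x^n$. Matching the resulting equation with the one derived by Bousquet-M\'elou and Xin \cite{BMX} for set partitions avoiding enhanced $3$-crossings finishes the proof. The decisive obstacle is the kernel analysis itself, since neither catalytic variable decouples easily from the other, forcing a careful obstinate iteration and substantial algebraic bookkeeping. A parallel route, pursued by Yan \cite{Yan}, would be to re-encode elements of $\I_n(\geq,\geq,-)$ as $0,1$ fillings of a staircase-type Ferrers shape containing no decreasing chain of three $1$'s, and then invoke the Chen--Deng--Du--Stanley--Yan bijection between such fillings and enhanced $3$-crossing-free set partitions.
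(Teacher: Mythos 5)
You should first note that the paper does not actually prove this theorem: it is stated as a cited result of Lin \cite{Lin} and Yan \cite{Yan}, and the paper's own contribution here is only the characterization in Observation \ref{observation:772B} (avoiders of $(\geq,\geq,-)$ are the inversion sequences decomposable into two increasing subsequences), which Lin's proof uses as its starting point. Your combinatorial setup is sound and is essentially the opening move of Lin's argument: the verification that $e\cdot v$ remains in the class if and only if $v>\mu(e)$ is correct, the label updates $(M,\mu)\mapsto(v,\mu)$ for $v>M$ and $(M,\mu)\mapsto(M,v)$ for $\mu<v\le M$ are correct, and the resulting two-parameter succession rule (equivalently, in the coordinates $h=n-M$, $k=M-\mu$) is a legitimate generating tree for $\I_n(\geq,\geq,-)$ with root label $(1,1)$ and $h+k$ children per node. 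Your $\mu$-threshold description is an equivalent repackaging of Observation \ref{observation:772B}, so up to this point nothing is wrong.

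The genuine gap is that everything after ``From here the plan is'' is unexecuted, and that is where the entire content of the theorem lives. You never write down the kernel $K(x;u,t)$ explicitly, never exhibit the substitutions annihilating it, never perform the obstinate iteration to obtain enough independent relations between $F(x;u,1)$ and $F(x;1,t)$, and never carry out the comparison with the Bousquet-M\'elou--Xin equation for enhanced $3$-crossing-free partitions \cite{BMX}. For two genuinely coupled catalytic variables this is not routine bookkeeping: whether the obstinate kernel method closes at all depends on the precise algebraic form of the functional equation, and the final identification of $F(x;1,1)$ with the partition-counting series is the theorem, not a formality. As written, the proposal establishes a correct generating tree and then asserts that a known hard technique will finish the job; it is a research plan (accurately describing Lin's route, with Yan's bijective route mentioned as an alternative) rather than a proof. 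To make it a proof you would need either to complete the kernel computation in full or to construct explicitly the encoding into $0,1$-fillings of Ferrers shapes and verify that it transports the avoidance condition to the absence of decreasing chains of three $1$'s.
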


It follows that $(\geq,\geq,-)$ is indeed counted by A108307. We can  additionally show that there is  a bijection that not only proves Wilf-equivalence of the patterns
772A and 772B below, but also preserves a number of statistics. First consider the following characterizations of each class.

\begin{observation} 
The inversion sequences with no $i<j<k$ such that $e_i \geq e_j \geq e_k$ are precisely those
that can be partitioned into two increasing subsequences. 
\label{observation:772B}
\end{observation}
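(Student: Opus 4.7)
The plan is to interpret this as an immediate instance of Mirsky's theorem (the order-theoretic dual of Dilworth's theorem) applied to a partial order built from $e$ itself. On $[n]$, define $i \prec j$ to mean $i < j$ and $e_i \geq e_j$; transitivity of $\prec$ is immediate from transitivity of $<$ and of $\geq$, so $\prec$ is a strict partial order. Under this order, a chain of length $m$ is exactly a weakly decreasing subsequence of $e$ of length $m$, while an antichain is a set of indices $i_1 < i_2 < \cdots < i_m$ along which $e$ is \emph{strictly} increasing.

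Both directions then fall out quickly. For the forward direction, if $e \in \I_n(\geq,\geq,-)$, then $\prec$ contains no chain of length $3$, so by Mirsky's theorem $[n]$ is coverable by at most two antichains, which is precisely a partition of the positions of $e$ into at most two strictly increasing subsequences. For the converse, suppose $[n] = A \sqcup B$ so that the entries indexed by $A$ and by $B$ each form a strictly increasing subsequence, and suppose toward a contradiction that $i < j < k$ satisfy $e_i \geq e_j \geq e_k$. Pigeonhole puts two of these three indices in the same part; call them $a < b$. Strict increase forces $e_a < e_b$, while the descent chain $e_i \geq e_j \geq e_k$ forces $e_a \geq e_b$, a contradiction.

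The only stylistic decision is whether to invoke Mirsky's theorem outright or to replace that step by a direct argument; given how transparent the poset is here, I would simply quote Mirsky. If a self-contained proof is preferred, one can instead sweep through $e$ from left to right running a two-pile patience-type procedure, always extending the admissible pile whose current last element is largest, and then use a short bookkeeping argument on any forced failure to extract a weakly decreasing triple from the most recently placed entries. I do not anticipate any real obstacle in either approach.
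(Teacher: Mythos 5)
Your proof is correct, but it takes a different route from the paper on the substantive direction. For ``partition exists $\Rightarrow$ avoids $(\geq,\geq,-)$'' both arguments are the same pigeonhole observation. For the converse, you define the poset $i \prec j \iff (i<j \text{ and } e_i \geq e_j)$ and invoke Mirsky's theorem: no chain of length $3$ gives a cover by two antichains, and antichains are exactly strictly increasing subsequences. The paper instead gives an explicit canonical partition: the subsequence of left-to-right maxima of $e$ (which is strictly increasing by definition), and the complementary subsequence, which is shown to be strictly increasing because any non-maximum $e_i$ has some $e_s \geq e_i$ with $s<i$, so a later $e_j \leq e_i$ would complete a forbidden triple $e_s \geq e_i \geq e_j$. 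Your approach is shorter and places the statement in its natural order-theoretic context, but it is non-constructive as stated; the paper's explicit left-to-right-maxima decomposition is not just a proof device but is reused verbatim in the bijection of Theorem~\ref{theorem:772A,B}, where the two parts of the partition play distinct roles. So if you adopt the Mirsky route you would still need to single out a canonical two-antichain cover for the later argument. One small point to watch: make sure ``increasing'' is read as \emph{strictly} increasing throughout (as your antichain computation correctly forces), since a weakly increasing part with a repeated value would not by itself rule out a pattern occurrence under the $\geq$ relations.
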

\begin{proof}
Suppose $e$ has such a partition $e_{a_1} < e_{a_2} < \cdots < e_{a_t}$ and
$e_{b_1} < e_{b_2} < \cdots < e_{b_{n-t}}$. 
 If there exists $i<j<k$ such that $e_i \geq e_j \geq e_k$, then no two of $i,j,k$ can both be in  $\{a_1, \ldots, a_t\}$ or both be in  $\{b_1, \ldots, b_{n-t}\}$, so $e$ avoids $(\geq,\geq,-)$. Conversely, if $e$ avoids $(\geq,\geq,-)$, let $a=(a_1, \ldots, a_t)$ be the sequence of left-to-right maxima of $e$.
Then $e_{a_1} < e_{a_2} < \cdots < e_{a_t}$.  Consider $i,j \not\in \{a_1, \ldots, a_t\}$ where $i<j$.  The fact that $e_i$ is not a left-to-right maxima implies there exists some $e_s$ such that $s<i$ and $e_s\geq e_i$.  Thus to avoid $(\geq,\geq,-)$, we must have $e_i < e_j$.
\end{proof}

\begin{observation}
Let $(e_1,e_2,\ldots,e_n) \in \I_n$.  Additionally, for any $i \in [n]$, let $M_i=\max(e_1,\allowbreak e_2,\ldots,e_{i-1})$.  Then $e \in \I_n(-,\leq,\geq)$ if and only if for every $i \in [n]$, the entry $e_i$ is a left-to-right maximum, or for every j where $i<j$,  we have $e_i > e_j$ or $M_i<e_j$.

\label{observation:772A}
\end{observation}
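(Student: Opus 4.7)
The plan is to prove the characterization by a direct two-way verification: the forbidden pattern $(-,\leq,\geq)$ at positions $i<j<k$ requires $e_i\geq e_k$ and $e_j \leq e_k$, and we exploit the middle position $j$ to connect with the stated local condition at each index.

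First I would handle the forward direction by contrapositive. Fix $e \in \I_n$ and suppose some index $i$ violates the condition: $e_i$ is not a left-to-right maximum, and there exists $j>i$ with $e_i \leq e_j$ and $M_i \geq e_j$. Since $M_i \geq e_j$, there is some $s < i$ with $e_s = M_i \geq e_j$. Then the triple of positions $s < i < j$ satisfies $e_i \leq e_j$ and $e_s \geq e_j$, which is precisely an occurrence of $(-,\leq,\geq)$, so $e \notin \I_n(-,\leq,\geq)$.

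For the reverse direction, assume the stated condition holds at every index $i$, and suppose for contradiction that $e$ contains the forbidden pattern at some $i<j<k$ with $e_j \leq e_k$ and $e_i \geq e_k$. Apply the condition at index $j$. Since $i<j$ and $e_i \geq e_k \geq e_j$, we have $M_j \geq e_i \geq e_j$, so $e_j$ is not a left-to-right maximum. Hence we are forced into the second clause: for every $j' > j$, either $e_j > e_{j'}$ or $M_j < e_{j'}$. Taking $j' = k$, the first option is ruled out by $e_j \leq e_k$, and the second is ruled out by $M_j \geq e_i \geq e_k$, a contradiction.

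There is no real obstacle here; the argument is a careful bookkeeping of the inequalities, and the only thing to keep straight is the convention that ``left-to-right maximum'' at position $i$ means $e_i > M_i$ (strictly greater than all earlier entries). Both directions fall out by reading the forbidden configuration ``through the middle position,'' which is what makes the characterization local at each index.
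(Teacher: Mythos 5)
Your proof is correct and follows essentially the same route as the paper's: both directions are obtained by reading the forbidden configuration through its middle position, using that $M_j \geq e_i$ forces $e_j$ to fall under the second clause, and conversely that a violation at $i$ together with a witness $e_s = M_i$ produces an occurrence of $(-,\leq,\geq)$ at $s<i<j$. The paper's write-up is merely terser (e.g., "which contradicts our assumption" compresses the case analysis you spell out), and your convention for left-to-right maxima matches the one used there.
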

\begin{proof}
Let $e \in \I_n$ satisfy the conditions of Observation \ref{observation:772A} and, to obtain a contradiction, assume there exist $i<j<k$ such that $e_j\leq e_k$ and $e_i \geq e_k$ (that is $e_j \leq e_k \leq e_i$).  Notice that $M_j=\max\{e_1,e_2,\ldots,e_{j-1}\} \geq e_i$.  It follows that $M_j\geq e_k \geq e_j$, which contradicts our assumption.

Conversely, if $(e_1,e_2,\ldots,e_n) \in \I_n(-,\leq,\geq)$, consider any $e_i$.  If $e_i$ is not a left-to-right maximum, then there exists some maximum value $M_i=e_s$ such that $s<i$ and $e_s\geq e_i$.  Therefore, in order to avoid a 201 pattern, any $e_j$ where $j>i$ must have $e_i > e_j$ or $e_j > M_i=e_s$.
\end{proof}

\begin{theorem}
For $n \geq 1$, $|\I_n(\geq,\geq,-)|=|\I_n(-,\leq,\geq)|$.
\label{theorem:772A,B}
\end{theorem}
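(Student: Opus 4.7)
The plan is to construct an explicit bijection $\phi: \I_n(\geq,\geq,-) \to \I_n(-,\leq,\geq)$ that fixes the set of positions of left-to-right maxima together with their values, and rearranges only the non-maxima. This is natural because both Observations \ref{observation:772B} and \ref{observation:772A} are phrased in terms of the LTR maxima skeleton, so both sets are determined by specifying the LTR maxima together with a valid filling of the non-maximum positions subject to their respective constraints.

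Concretely, for $e \in \I_n(\geq,\geq,-)$, let $q_1 < q_2 < \cdots < q_r$ be the non-max positions, with values $v_1 < v_2 < \cdots < v_r$ (strictly increasing by Observation \ref{observation:772B}), and write $M_j = \max(e_1, \ldots, e_{q_j - 1})$. I would partition the indices $1, \ldots, r$ into maximal \emph{runs}, where a new run begins at index $j$ precisely when $v_j > M_{j-1}$ (with the convention $M_0 = -1$), and then reverse the values within each run while keeping the positions fixed. Call the resulting sequence $\phi(e)$. For example, this sends $(0,1,0,1) \in \I_4(\geq,\geq,-)$ to $(0,1,1,0) \in \I_4(-,\leq,\geq)$ (a single run of non-maxima at positions $3,4$), while fixing $(0,0,2,2)$ (two singleton runs).

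The main steps will be to verify: (i) $\phi(e)$ is a valid inversion sequence with the same LTR maxima as $e$, using that each reversed value at position $q_j$ is still bounded by some $M_{j'} < q_{j'}$; (ii) $\phi(e) \in \I_n(-,\leq,\geq)$ by the characterization of Observation \ref{observation:772A}; and (iii) $\phi$ has an explicit inverse obtained by identifying runs in $e' \in \I_n(-,\leq,\geq)$ via the analogous condition ($w_j$ exceeds the running max at $q_{j-1}$) and reversing within each run.

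I expect step (ii) to be the main obstacle. It requires showing that for any two non-max positions $q_j < q_{j'}$, either $\phi(e)_{q_{j'}} < \phi(e)_{q_j}$ (when they lie in the same run, so reversal makes them strictly decreasing in position-order) or $\phi(e)_{q_{j'}} > M_j$ (when a new run begins between them, which by construction means the non-max value jumps strictly above the previous running max). The run definition is tailored precisely so that this dichotomy always holds. As a byproduct the bijection will preserve the statistics determined by the LTR max skeleton and by the multiset of values, including $\maxim$, $\dist$, $\maxx$, and $\zeros$, capturing the statistic-preservation property mentioned above.
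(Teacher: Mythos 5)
Your overall strategy --- fix the left-to-right-maxima skeleton and rearrange only the non-maximum values, preserving their multiset --- is exactly the paper's, but the specific rearrangement you propose (reversal within runs) does not work, and the failure cannot be repaired within a run-reversal framework. The problem is that the running maximum $M_j$ can strictly increase \emph{inside} one of your runs: a new left-to-right maximum of $e$ may occur between two consecutive non-max positions $q_{j-1}<q_j$ even though $v_j\leq M_{j-1}$, so no new run begins there. Reversal then transports a late, large value $v_{j+m}$ (bounded only by $M_{j+m-1}$) to the earliest position $q_j$ of the run, where it can exceed $M_j$ --- destroying the skeleton --- or even exceed $q_j-1$, so that the output is not an inversion sequence at all. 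Concretely, take $e=(0,1,0,3,1,3)\in\I_6(\geq,\geq,-)$: the non-max positions are $3,5,6$ with values $(0,1,3)$ and running maxima $(M_1,M_2,M_3)=(1,3,3)$, so $v_2=1\not>M_1$ and $v_3=3\not>M_2$ and all three indices form a single run; reversing gives $(0,1,3,3,1,0)$, which violates $e_3<3$. Similarly $e=(0,1,0,3,1,2)$ maps to $(0,1,2,3,1,0)$, in which position $3$ has become a new left-to-right maximum, so your claim (i) fails and your proposed inverse (computed from the runs of the image) cannot recover $e$.

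Moreover, no block-reversal rule can be correct. For the skeleton with maxima $0,1,3$ at positions $1,2,4$ and non-max positions $3,5,6$, the valid non-max fillings are $(0,1,2),(0,1,3),(0,2,3),(1,2,3)$ on the $(\geq,\geq,-)$ side and $(1,2,0),(1,3,0),(1,3,2),(0,3,2)$ on the $(-,\leq,\geq)$ side; matching multisets forces $(0,1,2)\mapsto(1,2,0)$, which is not obtained from $(0,1,2)$ by reversing any collection of contiguous blocks. The paper's proof of Theorem~\ref{theorem:772A,B} instead uses a greedy rearrangement: process the non-max positions left to right and assign to each the largest not-yet-used non-max value that is at most the running maximum there. (As printed the paper writes the strict inequality $k<\max(e_1,\ldots,e_{b_j-1})$, which already makes the defining set empty for $e=(0,1,0,1)$; it must be read as $k\leq\max(e_1,\ldots,e_{b_j-1})$, and then the set is always nonempty because $v_1,\ldots,v_j$ are all $\leq M_j$ and only $j-1$ of them have been used.) On stretches where the running maximum is constant this greedy map does act as the reversal you describe, which is why your examples $(0,1,0,1)$ and $(0,0,2,2)$ come out right; across a jump in the running maximum it does something genuinely different, and that is precisely the case your construction cannot handle.
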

\begin{proof}
We exhibit a bijection based on the characterizations in Observations \ref{observation:772B} and \ref{observation:772A}.

Given $e \in \I_n(\geq,\geq,-)$, define $f \in \I_n(-,\leq,\geq)$ as follows.
Let $e_{a_1} < e_{a_2} < \cdots < e_{a_t}$ be the sequence of left-to-right maxima of $e$ and let
$e_{b_1} < e_{b_2} < \cdots < e_{b_{n-t}}$ be the subsequence of remaining elements of $e$.

For $i= 1, \ldots, t$, set $f_{a_i}=e_{a_i}$.  For each $j=1,2, \ldots, n-t$, we extract an element of the multiset $B=\{e_{b_1}, e_{b_2},\ldots, e_{b_{n-t}}\}$ and assign it to $f_{b_1}, f_{b_2},\ldots, f_{b_{n-t}}$ as follows: 
\[
f_{b_j} = {\rm max}\{k \ | \ k \in B - \{f_{b_1}, f_{b_2},\ldots, f_{b_{j-1}}\} \ {\rm and} \
k < {\rm max}(e_1, \ldots, e_{{b_j}-1}) \}.
\]
By definition, $f$ satisfies the characterization property in Observation \ref{observation:772A} of $\I_n(-,\leq,\geq)$.
\end{proof}

The bijection in Theorem~\ref{theorem:772A,B} preserves a number of statistics, as is shown below.

\begin{corollary}
All of the following statistics have the same distribution over $\I_n(-, \leq, \geq)$ and
$\I_n(\geq,\geq,-)$.

\begin{itemize}
\item the number of locations $i$ such that $e_i=i-1$;
\item the largest entry of $e$;
\item the number of zeros of $e$ (there can be at most two in either class);
\item the number of distinct elements of $e$ (and therefore the number of repeats in $e$);
\item the number of left-to-right maxima of $e$.
\end{itemize}
\end{corollary}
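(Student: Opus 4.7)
The plan is to show that two structural features are preserved by the bijection $e \mapsto f$ of Theorem~\ref{theorem:772A,B}, and to read off all five statistics from those two features.

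The first key claim to prove is the \emph{prefix-maximum invariant}: for every $m \in [n]$,
\[
\max(f_1, \ldots, f_m) \;=\; \max(e_1, \ldots, e_m).
\]
This follows directly from the construction. At a left-to-right maximum position $a_i \leq m$ of $e$, we have $f_{a_i} = e_{a_i}$, so the largest such $e_{a_i}$, which equals $\max(e_1,\ldots,e_m)$, appears among the $f_r$ with $r \leq m$. Conversely, at a non-maximum position $b_j \leq m$, the defining rule forces $f_{b_j} < \max(e_1,\ldots,e_{b_j-1}) \leq \max(e_1,\ldots,e_m)$. Hence the prefix maxima of $f$ and $e$ agree at every index, and in particular the set of left-to-right maxima of $f$ coincides with that of $e$ (in both positions and values).

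The invariant above already yields three of the five statistics. Specialising to $m=n$ preserves $\maxx$; counting positions where a new prefix-max is achieved preserves the number of left-to-right maxima; and since any position $i$ with $e_i = i-1$ must be a (strict) left-to-right maximum of $e$ (because every earlier entry satisfies $e_j < j \leq i-1$), these positions lie among the $a_i$ and are therefore fixed by the bijection, preserving $\maxim$.

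The second claim is that $e$ and $f$ agree as multisets. Since $f_{a_i} = e_{a_i}$ at the $t$ maximum positions, and since the bijection fills the remaining $n-t$ positions $b_1,\ldots,b_{n-t}$ by successively removing elements from the multiset $B = \{e_{b_1},\ldots,e_{b_{n-t}}\}$, the only thing to check is that the procedure is well defined, i.e.\ it really exhausts $B$; this is exactly what the fact that Theorem~\ref{theorem:772A,B} produces a bijection guarantees (the cardinalities match, so no step can fail to terminate with every element of $B$ used). Once multiset equality is in hand, the number of zeros and the number of distinct values of $f$ agree with those of $e$, and $\repeats(f) = n - \dist(f) = n - \dist(e) = \repeats(e)$, completing the list. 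The only potentially delicate step is the prefix-max invariant, but it reduces to a one-line comparison once the two characterisations from Observations~\ref{observation:772B} and \ref{observation:772A} are invoked, so no serious obstacle is anticipated.
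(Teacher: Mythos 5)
Your proof is correct and follows the same route the paper intends: the corollary is stated in the paper without an explicit argument, as an immediate consequence of the fact that the bijection of Theorem~\ref{theorem:772A,B} fixes the left-to-right maxima (in position and value) and merely permutes the remaining values among the remaining positions. Your two invariants --- the prefix-maximum identity and the multiset equality of $e$ and $f$ --- make that implicit reasoning precise, and each of the five statistics does follow from them as you describe.
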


An interesting future direction would be to relate these statistics on the classes 772A and 772B of inversion sequences to corresponding statistics on set partitions avoiding enhanced 3-crossings.

\subsection{Classes 877(A,B,C,D):  Bell numbers and Stirling numbers}
\label{section:877}

The Bell number $B_n$ is the number of partitions of the set $[n]$ into nonempty blocks.  The Stirling number of the second kind, $S_{n,k}$ is the number of partitions of $[n]$ into $k$ blocks.

Among the triples of relations under consideration in this paper, four equivalence classes of patterns  have avoidance sets that appear to be counted by the Bell numbers.  We have shown this to be true for the classes 877A and 877C, whose inversion sequences have a similar character.  We have not confirmed this for the classes 877B and 877D, nor have we confirmed that 877B and 877D are Wilf-equivalent, even though our experiments show that there is likely a bijection that preserves several statistics.

It is interesting to note that $B(n)$ counts permutations avoiding $4 {\bar 1}32$ and several other barred patterns of length 4, as shown by Callan \cite{Callan06}.  Can any of these be related to one of the four patterns 877(A,B,C,D)?

\subsubsection{Class {\bf 877A}: Avoiding $e_i < e_j =e_k$}

These are the $011$-avoiding sequences.  
It was observed by Corteel, et al.\ \cite{paisI} that these are the $e \in \I_n$ in which the positive elements of $e$ are distinct, and that the $011$-avoiding sequences in $\I_n$ with $k$ zeros are counted by the Stirling number of the second kind, $S_{n,k}$. (This also appears in Stanley's \emph{Enumerative Combinatorics, Vol.\ 1} \cite{stanleyI}.) Thus $\I_n(011)$ is counted by the Bell numbers.  

\subsubsection{Class {\bf 877B}: Avoiding $e_i = e_j \geq e_k$}

The set $\I_n(=, \geq, -)$ consists of all $e \in \I_n$ such that no element appears more than twice and if an element, $x$, is repeated, all elements following the second occurrence of $x$ must be larger than $x$.  

From our calculations, it appears that $\I_n(=, \geq, -)$ is counted by the Bell numbers and, in fact, that the number of $e \in \I_n(=, \geq, -)$ with $k$ repeats is given by A124323, the number of set partitions of $[n]$ with $k$ blocks of size larger than 1, but we have not proven this.

\subsubsection{Class {\bf 877C}: Avoiding $ e_i \neq e_j \neq e_k$ and $e_i=e_k$}

Observe that these are the $e \in \I_n$ in which only adjacent elements of $e$ can be equal.
\begin{theorem}
The number of $e \in \I_n$ in which only adjacent elements of $e$ can be equal is $B_n$, the $n$th Bell number.
\end{theorem}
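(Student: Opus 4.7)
The plan is to prove the refined identity $a_{n,d} = S(n,n-d+1)$, where $a_{n,d}$ denotes the number of $e \in \I_n(\neq,\neq,=)$ with exactly $d$ distinct values and $S$ is the Stirling number of the second kind; summing over $d$ then gives $|\I_n(\neq,\neq,=)| = \sum_{k=1}^n S(n,k) = B_n$.

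First I would sharpen the characterization preceding the theorem: if $e \in \I_n$ avoids $(\neq,\neq,=)$ and $e_i = e_k$ for some $i < k$, then every intermediate $e_j$ must equal $e_i$, else $(e_i,e_j,e_k)$ is a forbidden triple.  Hence equal entries of $e$ occupy a consecutive block of positions, and the distinct values of $e$ are in bijection with its maximal runs.

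Next I would derive a recurrence for $a_{n,d}$ by examining how $e \in \I_n(\neq,\neq,=)$ extends to $e \cdot e_{n+1} \in \I_{n+1}(\neq,\neq,=)$.  By the characterization, the only valid choices for $e_{n+1} \in \{0,1,\dots,n\}$ are either $e_{n+1}=e_n$ (which preserves the distinct-count) or $e_{n+1} \notin \{e_1,\dots,e_n\}$ (which raises the distinct-count by one); any other choice, namely a previously-used value $v \neq e_n$, would produce a forbidden triple $(e_i,e_n,e_{n+1})$ with $e_i = v = e_{n+1}$ and $e_i \neq e_n$.  Sorting by the distinct-count of $e$ and noting that there are $n+1-(d-1)=n-d+2$ available fresh values in $\{0,\dots,n\}$ when $e$ has $d-1$ distinct entries gives
\[
a_{n+1,d} = a_{n,d} + (n-d+2)\,a_{n,d-1},\qquad a_{1,1}=1.
\]

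Finally I would match this against the classical Stirling recurrence $S(n+1,k) = k\,S(n,k) + S(n,k-1)$ by substituting $k=n-d+2$: the recurrences agree under the correspondence $a_{n,d}=S(n,n-d+1)$, and both sides share the initial condition $a_{1,1}=1=S(1,1)$, so an easy induction on $n$ concludes.  The main delicacy is the extension step: one must check that a fresh value can never retrofit a forbidden triple (immediate, since a fresh value cannot match any earlier entry and so cannot serve as $e_k$), while a non-$e_n$ repeat always does.  As a byproduct, the proof yields a Stirling refinement — inversion sequences in $\I_n(\neq,\neq,=)$ with $d$ distinct values biject with set partitions of $[n]$ into $n-d+1$ blocks — exactly paralleling the refinement in class \textbf{877A}.
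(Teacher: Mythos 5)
Your proof is correct, but it takes a genuinely different route from the paper. The paper disposes of this class in one line: it maps $e \in \I_n(\neq,\neq,=)$ to $e'$ with $e'_i = 0$ whenever $e_i \in \{e_1,\ldots,e_{i-1}\}$ and $e'_i = e_i$ otherwise, checks this is a bijection onto $\I_n(011)$ (class 877A), and inherits the Bell-number count from the known enumeration of $011$-avoiders. You instead work intrinsically: you extract the run-structure characterization, set up the recurrence $a_{n+1,d} = a_{n,d} + (n-d+2)a_{n,d-1}$, and match it to the Stirling recurrence under $a_{n,d} = S(n,n-d+1)$ --- I verified the recurrence, the index bookkeeping, and the base case, and they all check out (e.g., for $n=3$ one gets $1,3,1$, consistent with the five avoiders). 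What your approach buys is a self-contained proof together with an explicit Stirling refinement by the number of distinct entries; what the paper's approach buys is brevity and a structural link between classes 877C and 877A (its bijection in fact carries the same refinement implicitly, since $\zeros(e') = n - \dist(e) + 1$ and $\I_n(011)$ with $k$ zeros is counted by $S(n,k)$). One small point to tighten: you verify that a fresh value cannot create a forbidden triple, but you should also record the symmetric check that appending $e_{n+1} = e_n$ is safe --- any forbidden triple $(i,j,n+1)$ with $j<n$ would force $(i,j,n)$ to be forbidden already, and $j=n$ is impossible since then $e_j = e_{n+1}$. This is routine but needed to justify that exactly the two listed extensions are valid.
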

\begin{proof}
It can be checked that the following map from $ \I_n(\neq,\neq,=)$ to $\I_n(011)$ is a bijection.
Send $e \in \I_n(\neq,\neq,=)$ to $e'$, defined by
$e'_i=0$ if $e_i \in \{e_1, \ldots, e_{i-1}\}$ and otherwise $e'_i=e_i$.
\end{proof}

\subsubsection{Class {\bf 877D}: Avoiding $e_i \geq e_j$ and $e_i=e_k$}

$\I_n(\geq, -, =)$ is the set of $e \in \I_n$ such that no element appears more than twice and if an element  $x$ is repeated, all elements between the two occurrences of $x$  must be larger than $x$  (Note the similarity to 877B).

From our calculations, it appears that $\I_n(\geq, -, =)$ is also counted by the Bell numbers.  Moreover,  it appears that all of the following statistics are equally distributed over the classes 877B and 877D:

\begin{itemize}
\item the number of locations $i$ such that $e_i=i-1$;
\item the largest entry of $e$;
\item the number of zeros of $e$ (there can be at most two in either class);
\item the number of distinct elements of $e$ (and therefore the number of repeats in $e$, which appears to be A124323).
\end{itemize}

\subsection{Class 924:  Central binomial coefficients}
\label{section:924}

\begin{theorem}
$|\I_n(>,-,-)| = {2n-2 \choose n-1}$.
\end{theorem}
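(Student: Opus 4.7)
The plan is to reduce to the Catalan case already established in Class \textbf{429A} and then multiply by a simple factor for the last entry. First I would observe that the condition ``no $i<j<k$ with $e_i > e_j$'' is equivalent to requiring that $(e_1, e_2, \ldots, e_{n-1})$ be weakly increasing, with no constraint at all on $e_n$. Indeed, for any $i<j\le n-1$ with $e_i > e_j$, one can always choose $k\in\{j+1,\ldots,n\}$ to witness the forbidden triple; conversely, if $(e_1,\ldots,e_{n-1})$ is weakly increasing, no such triple can exist since any $i<j<k$ satisfies $j\le n-1$, forcing $e_i\le e_j$.

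The second step is to count: a weakly increasing inversion sequence $(e_1,\ldots,e_{n-1})$ with $0\le e_i\le i-1$ is precisely an element of $\I_{n-1}(-,>,-)$, which by Section~\ref{section:429} (Class \textbf{429A}) is enumerated by the Catalan number $C_{n-1}$. Since $e_n$ can be any of the $n$ values in $\{0,1,\ldots,n-1\}$ independently, we get
\[
|\I_n(>,-,-)| \;=\; n\cdot C_{n-1} \;=\; n\cdot\frac{1}{n}\binom{2n-2}{n-1} \;=\; \binom{2n-2}{n-1},
\]
as claimed.

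There is no serious obstacle here; the main ``work'' is simply the observation that the ``$-$'' relations in the second and third slots make $k$ a free witness, so the pattern $(>,-,-)$ on sequences of length $n$ really only constrains the prefix of length $n-1$. The Catalan enumeration of weakly increasing inversion sequences has already been carried out, so this result is immediate. A sanity check against the values $1,2,6,20,70,\ldots$ for $n=1,2,3,4,5,\ldots$ confirms the formula.
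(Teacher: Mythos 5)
Your proof is correct and follows essentially the same route as the paper: characterize $\I_n(>,-,-)$ as the sequences with weakly increasing prefix $(e_1,\ldots,e_{n-1})$ and free last entry, then compute $n\cdot C_{n-1}=\binom{2n-2}{n-1}$ using the Catalan enumeration from Class 429A. The only difference is that you spell out the (easy) justification that the ``$-$'' slots make $k$ a free witness, which the paper leaves implicit.
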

\begin{proof}
$\I_n(>,-,-)$ is the set of $e\in \I_n$ with $e_1 \leq \ldots \leq e_{n-1}$ (counted by the Catalan number $C_{n-1}$ as shown in Section \ref{section:429})
and with $e_n$ chosen arbitrarily from $\{0, \ldots, n-1\}$. Thus
\[
|\I_n(>,-,-)| = n C_{n-1} = n \left ( \frac{1}{n}  {2n-2 \choose n-1} \right ) =  {2n-2 \choose n-1} .
\]
\end{proof}

\subsection{Class 1064: Conjectured to be counted by A071356}
\label{section:1064}

Class 1064 are those inversion sequences with no $i<j<k$ such that $e_i > e_j \leq e_k$. These are the inversion sequences $e \in \I_n$ satisfying, for some $t$  such that $1 < t \leq n$,
\[
e_1 \leq \ldots \leq e_t > e_{t+1} > \ldots > e_n.
\]

Our experiments suggest that these are counted by A071356 in the OEIS, which Emeric Deutsch notes counts the number of underdiagonal lattice paths from $(0,0)$ to the line $x=n$ using only steps $R=(1,0)$, $V=(0,1)$, and $D=(1,2)$ \cite{Sloane}.

It also appears from our experiments that the distribution of the number of distinct elements of $e$ is symmetric and unimodal on 
$\I_n(<,\leq,-)$. The number of $e \in \I_n(<,\leq,-)$ with $\dist(e)=k$  is given in the table below for $n=1, \ldots 7$.
\begin{center}
\begin{tabular}{ccccccc}
1&&&&&&\\
1 & 1&&&&&\\
1 & 4 & 1&&&&\\
1 & 9 & 9 & 1 &&&\\
1 & 16 & 38 & 16 & 1 &&\\
1 & 25 & 110 & 110 & 25 & 1&\\
1 & 36 & 255 & 480 & 255 & 36 & 1 \\
\end{tabular}
\end{center}
If these observations are true in general, this provides a new simple combinatorial interpretation for 
 A071356 with a natural refinement via a symmetric statistic.

\subsection{Class 1265: $\Sn_n(2143,3142,4132)$ }
\label{section:1265}
\label{1265}

Observe that $\I_n(>,<,-)$ is the set of $e \in \I_n$ satisfying, for some $t$ with $1 < t \leq n$,
\[
e_1 \leq e_2 \leq \ldots \leq e_t > e_{t+1} \geq \ldots \geq e_n.
\]
Our experiments suggested that $\I_n(>,<,-)$ is counted by A033321, which counts $\Sn_n(\allowbreak2143,\allowbreak3142,\allowbreak4132)$, as well as permutations avoiding several other triples of 4-permutations.  Bur\-stein and Strom\-quist confirmed this by recognizing a natural bijection between $\Sn_n(2143,3142,\allowbreak 4132)$ and $\I_n(>,<,-)$ \cite{BS}.   Their theorem is as follows. Recall from Section 1 that  $invcode:\Sn_n \rightarrow \I_n$ is the reverse of the Lehmer code. 

\begin{theorem}[Burstein, Stromquist \cite{BS}]
For $n \geq 1$, $invcode(\Sn_n(2143,3142,4132)) = \I_n(>,<,-)$.
\end{theorem}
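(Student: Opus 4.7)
The plan is to exhibit a structural characterization of both sides in terms of unimodal sequences. Since $invcode(\pi) = L(\pi)^R$ and the reverse of a unimodal sequence is still unimodal, showing $invcode(\pi) \in \I_n(>,<,-)$ amounts to showing that the Lehmer code $L(\pi)$ is unimodal (weakly increasing, then weakly decreasing). So the heart of the argument is the bi-implication $\pi \in \Sn_n(2143, 3142, 4132)$ if and only if $L(\pi)$ is unimodal.

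I would begin by observing the common structure of the three forbidden patterns: in each of $2143$, $3142$, and $4132$, the entries at positions $2, 3, 4$ form a $132$-pattern and the entry at position $1$ is larger than the entry at position $2$, with the only difference being where $\pi_a$ sits among $\pi_b, \pi_c, \pi_d$. Hence avoiding all three is equivalent to containing no quadruple $a < b < c < d$ with $\pi_a > \pi_b$ and $\pi_b < \pi_d < \pi_c$. I would then record the adjacent-step dictionary between $\pi$ and $L(\pi)$: descents of $\pi$ correspond exactly to adjacent strict descents of $L$; ascents of $\pi$ correspond to adjacent weak ascents of $L$; and an adjacent strict ascent $L_i < L_{i+1}$ occurs precisely when $\pi_i < \pi_{i+1}$ together with the existence of some $j > i+1$ satisfying $\pi_i < \pi_j < \pi_{i+1}$ (an adjacent $132$-trigger).

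For the forward direction I would argue the contrapositive. If $L(\pi)$ is not unimodal, one can choose adjacent positions with $L_p > L_{p+1}$ and $L_q < L_{q+1}$ with $p < q$, picking $p$ to be the largest adjacent descent location before $q$; the dictionary then gives $\pi_p > \pi_{p+1} < \pi_{p+2} < \cdots < \pi_q < \pi_{q+1}$, together with some $j > q+1$ satisfying $\pi_q < \pi_j < \pi_{q+1}$. A quick case split on where $\pi_p$ sits relative to $\pi_j$ and $\pi_{q+1}$ at the four positions $p, p+1, q+1, j$ yields a pattern $2143$ (when $\pi_p < \pi_j$), $3142$ (when $\pi_j < \pi_p < \pi_{q+1}$), or $4132$ (when $\pi_p > \pi_{q+1}$), contradicting avoidance.

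The main obstacle, I expect, lies in the reverse direction. Suppose $\pi$ contains a forbidden pattern at $a < b < c < d$. The inequality $\pi_a > \pi_b$ with $a < b$ immediately forces an adjacent descent of $\pi$ somewhere in $[a, b-1]$, and hence an adjacent strict descent of $L$. The delicate step is promoting the $132$-subsequence $(b, c, d)$ to an \emph{adjacent} $132$-trigger, i.e., an adjacent strict ascent of $L$ at some position $q \ge b$. For this I would use a minimization argument: among $132$-subsequences $(x, y, z)$ of $\pi$ with $x \ge b$, pick one minimizing $y - x$; then a case analysis on $\pi_{x+1}$ relative to $\pi_x$ and $\pi_y$ shows that whenever $y > x + 1$, one of $(x, x+1, z)$ or $(x+1, y, z)$ is again a $132$-subsequence with strictly smaller $y - x$, forcing $y = x+1$ and thus an adjacent trigger at $x$. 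Combining the adjacent descent in $[a, b-1]$ with the adjacent trigger at $q \ge b$ yields the required non-unimodality of $L$, completing the equivalence and the theorem.
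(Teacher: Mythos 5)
Your proposal is correct, but there is no in-paper argument to compare it against: the paper states this result as a theorem of Burstein and Stromquist \cite{BS} and does not prove it; its own contribution in Section~\ref{section:1265} is only the observation that $\I_n(>,<,-)$ consists of the unimodal inversion sequences $e_1 \leq \cdots \leq e_t > e_{t+1} \geq \cdots \geq e_n$. What you have written is therefore a self-contained proof of the cited result. Your reduction is the right one: since $invcode(\pi)=L(\pi)^R$ and the valley pattern $e_i>e_j<e_k$ is invariant under reversal, the theorem is equivalent to ``$\pi$ avoids $\{2143,3142,4132\}$ if and only if $L(\pi)$ is unimodal,'' and your adjacent-step dictionary (descents of $\pi$ are exactly the adjacent strict descents of $L$; $L_q<L_{q+1}$ holds exactly when $\pi_q<\pi_{q+1}$ and some $j>q+1$ satisfies $\pi_q<\pi_j<\pi_{q+1}$) is accurate and carries the whole argument. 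The forward direction is airtight: taking $p$ to be the last adjacent descent before $q$ gives $\pi_p>\pi_{p+1}<\cdots<\pi_q<\pi_{q+1}$ plus the witness $j$, and the quadruple $(p,p+1,q+1,j)$ realizes $2143$, $3142$, or $4132$ according to where $\pi_p$ sits. In the reverse direction the minimization of $y-x$ over $132$-subsequences with $x\ge b$ does work, but your case split is stated slightly too coarsely: in the subcase $\pi_x<\pi_{x+1}<\pi_y$ you must also compare $\pi_{x+1}$ with $\pi_z$ to decide which of $(x,x+1,z)$ (when $\pi_z<\pi_{x+1}$) or $(x+1,y,z)$ (when $\pi_{x+1}<\pi_z$) is the $132$-subsequence with strictly smaller gap. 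With that comparison added, every case with $y>x+1$ produces a smaller $132$-subsequence whose first index is still at least $b$, so the minimizer has $y=x+1$, yielding the adjacent strict ascent of $L$ at some $q\ge b$; combined with the adjacent strict descent forced in $[a,b-1]$ by $\pi_a>\pi_b$, this destroys unimodality. Since $invcode$ is a bijection from $\Sn_n$ onto $\I_n$, the bi-implication gives the asserted set equality.
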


From Section 1, $invcode(\pi)=e$ if and only if $e=\Theta((\pi^{C})^{R})$, giving the following.

\begin{corollary}
$\Theta(\Sn_n(2143,3142,3241)) = \I_n(>,<,-)$. 
\end{corollary}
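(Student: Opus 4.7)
The corollary is a routine symmetry-transfer from the Burstein–Stromquist theorem, using the identity $invcode(\pi)=\Theta((\pi^C)^R)$ already recorded in Section~1. Here is how I would structure it.

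The plan is to apply reverse-complement as a bijection on $\Sn_n$ and to track what it does to the forbidden patterns. From Section~1, for every permutation $\pi$ we have $invcode(\pi) = \Theta((\pi^C)^R)$. Hence, letting $\Phi(\pi) = (\pi^C)^R$, the Burstein–Stromquist identity $invcode(\Sn_n(2143,3142,4132)) = \I_n(>,<,-)$ can be rewritten as
\[
\Theta\bigl( \Phi(\Sn_n(2143,3142,4132)) \bigr) = \I_n(>,<,-).
\]
Since $\Phi$ is a bijection on $\Sn_n$, the set $\Phi(\Sn_n(2143,3142,4132))$ is itself a pattern-avoidance class, obtained by taking the reverse-complement of each of the forbidden patterns.

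The first step is the standard observation that $\pi$ avoids $\sigma$ iff $\pi^C$ avoids $\sigma^C$ iff $\pi^R$ avoids $\sigma^R$; combining these, $\pi$ avoids $\sigma$ iff $\Phi(\pi)=(\pi^C)^R$ avoids $(\sigma^C)^R$. So I would compute $(\sigma^C)^R$ for each $\sigma \in \{2143,3142,4132\}$. A direct check gives $(2143^C)^R = (3412)^R = 2143$, $(3142^C)^R = (2413)^R = 3142$, and $(4132^C)^R = (1423)^R = 3241$. Therefore $\Phi$ restricts to a bijection
\[
\Sn_n(2143,3142,4132) \xrightarrow{\ \sim\ } \Sn_n(2143,3142,3241).
\]

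Substituting this into the displayed equation yields
\[
\Theta(\Sn_n(2143,3142,3241)) = \I_n(>,<,-),
\]
which is the claim. There is no real obstacle; the only thing to be careful about is bookkeeping in the reverse-complement calculations, and the symmetry of the first two patterns (each is fixed by $\Phi$) is what makes the final triple differ from the Burstein–Stromquist triple only in the third element.
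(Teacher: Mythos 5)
Your proposal is correct and follows essentially the same route as the paper, which derives the corollary from the Burstein--Stromquist theorem via the identity $invcode(\pi)=\Theta((\pi^{C})^{R})$; you simply make explicit the reverse-complement computation on the three forbidden patterns (each of $2143$ and $3142$ is fixed, and $4132 \mapsto 3241$), which the paper leaves implicit. The pattern arithmetic checks out.
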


\subsection{Class 1347: Conjectured to be counted by $\Sn_n(4123,4132,4213)$}
\label{section:1347}

Our calculations suggest that $\I_n(>, -, \leq)$ is counted by A106228 in the OEIS, which was recently shown to count
$\Sn_n(4123,4132,4213)$ by Albert, Homberger, Pantone, Shar and Vatter \cite{AHPSV}.
We have not been able to confirm that our avoidance sequence is A106228. 

\subsection{Class 1385: $\I_n(000)$ and the Euler up/down numbers}
\label{section:1385}

$\I_n(=,=,-)$ is the set of inversion sequences avoiding the pattern ``000''.
It was shown by Corteel, et al.\ \cite{paisI} that
$|\I_n(000)| = E_{n+1}$, where $E_n$ is the Euler up/down number which counts the number of $\pi \in \Sn_n$ such that
$\pi_1 < \pi_2 > \pi_3 < \pi_4 > \cdots$.
The proof was via a bijection with $n$-vertex  0-1-2 increasing trees, which are also counted by $E_{n+1}$.

Another family of permutations counted by $E_{n+1}$ is the number of simsun permutations of $[n]$, introduced by Simion and Sundaram \cite{sundaram}.
A simsun permutation is one with no double descents, even after the removal of the elements 
$\{n,n-1, \ldots, k\}$ for any $k$.
It was shown that the number of $e \in \I_n(000)$ with $n-k$ distinct elements is the number of
simsun permutations of $n$ with $k$ descents \cite{paisI}.   The method of proof was to show that they satisfy the same recurrence. Recenty, Kim and Lin proved bijectively that the joint distribution of $\asc(\pi)$ and $\last(\pi)$  over simsun permutations $\pi$ of $[n]$ is the same as the  joint distribution of $\dist(e)$ and $\last(e)+1$ for $e \in \I_n(000)$.

An interesting question is whether there is a natural bijection between $\I_n(000)$ and up-down (or down-up) permutations of  $[n+1]$. For example, our calculations suggest that the number of $e \in \I_n(000)$ with $e_n=k-1$ is the number of down-up permutations $\pi$  of $[n+1]$ with $\pi_1=k+1$.  
\subsection{Class 1694: $\I_n(102)$}
\label{section:1694}

It was suggest by Corteel, et al.\ \cite{paisI} that $\I_n(>,-,<)$ is counted by A200753 in the OEIS \cite{Sloane}, a sequence defined by the generating function 
\begin{eqnarray}
A(x)& = &1 + (x-x^2)(A(x))^3.
\label{1694}
\end{eqnarray}
  This was confirmed by Mansour and Shattuck \cite{Mansour} who derived an explicit formula for $|\I_n(102)|$.

\begin{theorem}[Mansour-Shattuck  \cite{Mansour}]
The generating function 
$\sum_{n \geq 0} |\I_n(102)|x^n$ satisfies \eqref{1694}.
\end{theorem}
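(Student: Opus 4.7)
The plan is to derive the functional equation via the obstinate kernel method applied to a bivariate refinement of $A(x) = \sum_{n \geq 0} |\I_n(102)| x^n$. The first step is to characterize the valid one-step extensions of a 102-avoiding inversion sequence. For $e \in \I_n(102)$ and $t \in \{0,1,\ldots,n\}$, the extended sequence $e \cdot t$ lies in $\I_{n+1}(102)$ iff no new 102 pattern is created using the final position, i.e., iff there is no $i < j \leq n$ with $e_i > e_j$ and $e_i < t$. Defining
\[
T(e) := \max\{e_i : 1 \leq i \leq n \text{ and } e_j < e_i \text{ for some } j > i\}
\]
(with $T(e)$ regarded as ``unbounded'' when $e$ is weakly increasing), this says precisely that $t$ is legal iff $t \leq T(e)$, or iff $t$ is arbitrary when no such descent exists.

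Next, I would introduce the bivariate generating function $F(x,y) = \sum_{n \geq 0}\sum_{e \in \I_n(102)} x^n y^{T(e)}$, handling the weakly increasing case as a boundary contribution, and translate the extension rule into a functional equation relating $F(x,y)$ to its specialization $F(x,1) = A(x)$. The append step splits naturally into two cases: appending $t \leq T(e)$ leaves the statistic unchanged, while appending $t > T(e)$ resets $T$ to a new value depending on $t$ and on the maximum entry of $e$. Summing over all extensions produces a linear equation of the shape $K(x,y) F(x,y) = P(x,y) + x R(x,y) A(x)$, where $K(x,y)$ is a polynomial kernel. I would then apply the obstinate kernel method: find a power-series root $Y(x)$ of $K(x,Y) = 0$, substitute into the functional equation to eliminate $F(x,y)$, and solve. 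The resulting algebraic identity for $A(x)$, after selecting the correct branch by matching against the initial terms $1,1,2,6,24,114,588,1694$, should be $A(x) = 1 + (x-x^2)A(x)^3$.

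The hardest part is the bookkeeping for $T$: the update rule for $T(e \cdot t)$ when $t > T(e)$ depends on more of $e$ than $T(e)$ alone, so the catalytic variable $y^{T(e)}$ may need to be augmented by tracking in addition $\maxx(e)$ or $\last(e)$, leading to a genuinely multivariate kernel equation. A more elegant alternative, and a useful sanity check, would be to find a direct combinatorial decomposition mirroring the cubic structure: rewriting the equation as $A - 1 = x A^3 - x^2 A^3$ suggests that a nonempty $e \in \I_n(102)$ decomposes into three 102-avoiding pieces by marking a distinguished position such as the leftmost maximum, with the $-x^2 A^3$ term encoding an inclusion-exclusion correction for a degenerate overlap. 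Such a bijection would be more satisfying than the analytic argument, but is likely harder to discover, so the kernel-method route is the one I would pursue first.
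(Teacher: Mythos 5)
There is a genuine gap here: what you have written is a plan, not a proof. The paper itself does not prove this theorem --- it is quoted from Mansour--Shattuck, and the authors explicitly remark that a direct combinatorial argument would be interesting --- so the only question is whether your sketch would close on its own, and it does not. The decisive step, namely writing down and solving the kernel equation, is never carried out; you yourself concede that the update rule for $T(e\cdot t)$ ``depends on more of $e$ than $T(e)$ alone,'' so the single catalytic variable $y^{T(e)}$ does not yield a closed functional equation, and the claimed shape $K(x,y)F(x,y)=P(x,y)+xR(x,y)A(x)$ is conjectural. Until a sufficient catalytic statistic is identified and the equation is actually derived and solved, nothing forces the answer to be \eqref{1694}.

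There are also two concrete errors in the parts you did write down. First, your characterization of legal extensions is wrong: $e\cdot t$ avoids $102$ iff $t\le \min\{e_i : e_i>e_j \text{ for some } j>i\}$, not the maximum. For example $e=(0,2,0,1,0)\in \I_5(102)$ has descent-top values $\{1,2\}$, and appending $t=2\le \max$ creates the occurrence $e_4e_5e_6=(1,0,2)$. Since this statistic is the foundation of the whole construction, the error propagates into every subsequent case split. Second, the initial terms you propose to use for branch selection are incorrect: $|\I_n(102)|$ for $n=0,1,\ldots,7$ is $1,1,2,6,22,89,381,1694$ (e.g., $|\I_4(102)|=24-2=22$, since only $(0,1,0,2)$ and $(0,1,0,3)$ contain $102$), not $1,1,2,6,24,114,588,1694$; matching against your list would reject the correct branch. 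Your closing suggestion of a direct trisection mirroring $A-1=(x-x^2)A^3$ is exactly the open problem the paper poses, so it cannot be leaned on either.
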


It would be interesting to find a direct combinatorial argument.

\subsection{1806(A,B,C,D):  large Schr\"oder numbers}
\label{section:1806}

The {\em  large Schr\"oder number}  $R_n$ is the number of  Schr\"oder $n$-paths; that is, the number of paths in the plane from $(0,0)$ to $(2n,0)$ never venturing below the $x$-axis, and using only the steps $(1,1)$ (up), $(1,-1)$ (down) and $(2,0)$ (flat).

In the area of pattern avoiding permutations,  $R_{n-1}$
counts the separable permutations $\Sn_n(2413,3142)$, as well as
$\Sn_n(\alpha,\beta)$ for many other pairs $(\alpha,\beta)$ of patterns of length 4 \cite{kremer}.  We have four inequivalent triples of relations whose avoidance sets are counted by the large Schr\"oder numbers, two  of which (1806B and 1806D) correspond in natural ways to
a pair of patterns of length 4.

\subsubsection{Class {\bf  1806A}: Avoiding $e_j > e_k$ and $e_i < e_k$}

These are the sequences avoiding $021$.  It was shown by Corteel, et al.\ \cite{paisI} that
 $e\in \I_n$ avoids $021$ if and only if its positive entries are weakly increasing.
It was also proven in \cite{paisI} that $|\I_n(021)|=R_{n-1}$.

The following refinements were shown:
\begin{itemize}
\item The number of $021$-avoiding inversion sequences  $e$ in $\I_n$ with $k$ positions $i$ such that $e_i=i-1$ is equal to the number of Schr\"oder $(n-1)$-paths with $k-1$ initial up steps.

\item The number of $021$-avoiding inversion sequences  $e$ in $\I_n$ with $k$ zeros is equal to the number of Schr\"oder $(n-1)$-paths with $k-1$ peaks (or $k-1$ flat steps).
\end{itemize}

It was also shown that the ascent polynomial for $\I_n(021)$  is palindromic and corresponds to sequence A175124 in the OEIS.

\subsubsection{Class {\bf  1806B}: Avoiding $e_i > e_j$  and $e_i \geq e_k$ }

It is known that $\Sn_n(2134,2143)$ is counted by $R_{n-1}$ \cite{kremer}.  This is a member of ``Class VI'' in Kitaev's book \cite{kitaev};
we use this fact to count $\I_n(>, -, \geq)$.
\begin{theorem}
$|\I_n(>, -, \geq)|= R_{n-1}.$ 
\end{theorem}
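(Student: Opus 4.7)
The plan is to establish $|\I_n(>,-,\geq)| = |\Sn_n(2134,2143)|$ via a bijection, from which the claimed count $R_{n-1}$ follows from the known enumeration of $\Sn_n(2134,2143)$ as Kitaev's Class VI pair. The candidate bijection is the restriction of the standard map $\Theta\colon \Sn_n \rightarrow \I_n$ to $\Sn_n(2134,2143)$.

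First I would give parallel structural characterizations of the two sets. On the inversion-sequence side, $e \in \I_n(>,-,\geq)$ exactly when for every position $i$ the set $\{j>i : e_j<e_i\}$ has at most one element, and whenever that single element $j$ exists, every $k>j$ satisfies $e_k>e_i$. On the permutation side, $\pi \in \Sn_n(2134,2143)$ exactly when for every $a<b$ with $\pi_a>\pi_b$ there is at most one $c>b$ with $\pi_c>\pi_a$ (both forbidden patterns share the opening descent $\pi_a,\pi_b$ followed by two positions exceeding $\pi_a$, in either relative order). These two characterizations each forbid a second witness after a first one, and should match naturally under $\Theta$.

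The key algebraic tool is the identity
\[
e_i - e_j \;=\; \bigl|\{\ell<i : \pi_i<\pi_\ell<\pi_j\}\bigr| \;-\; \bigl|\{i<\ell<j : \pi_\ell>\pi_j\}\bigr|,
\]
valid whenever $\pi_i<\pi_j$ and $i<j$; in particular, $e_i>e_j$ with $i<j$ forces $\pi_i<\pi_j$. Given a pattern $e_i>e_j$ with $e_i\geq e_k$ in $\Theta(\pi)$, this identity produces some $\ell<i$ with $\pi_i<\pi_\ell<\pi_j$, and applying the analogous identity to the pair $(i,k)$ furnishes a fourth index $k'>j$ with $\pi_{k'}>\pi_\ell$; depending on whether $\pi_{k'}>\pi_j$ or $\pi_{k'}<\pi_j$, the quadruple $(\ell,i,j,k')$ realizes a $2134$ or $2143$ occurrence in $\pi$. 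This establishes $\Theta(\Sn_n(2134,2143)) \subseteq \I_n(>,-,\geq)$.

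For the reverse inclusion I would take a $2134$ or $2143$ occurrence at $(a,b,c,d)$ in $\pi$ and use the same difference identity to deduce $e_b>e_c$ and $e_b\geq e_d$. The main obstacle lies here: the naive choice of $(a,b,c,d)$ need not yield $e_b>e_c$, because intermediate positions $\ell\in(b,c)$ with $\pi_\ell>\pi_c$ can inflate $e_c$ above $e_b$. To circumvent this I would pick the occurrence canonically, for instance taking $b$ maximally and $c$ minimally among valid choices, so that the intermediate count in the difference identity vanishes and the desired inequalities in $e$ follow. With both inclusions in hand, $\Theta$ restricts to the asserted bijection and the identity $|\I_n(>,-,\geq)|=R_{n-1}$ is established.
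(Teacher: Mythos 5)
Your proposal takes essentially the same route as the paper: both show that $\Theta$ restricts to a bijection between $\Sn_n(2134,2143)$ and $\I_n(>,-,\geq)$ and then invoke the known enumeration of that permutation class by $R_{n-1}$, with the forward direction producing a witness $\ell<i$ satisfying $\pi_i<\pi_\ell<\min(\pi_j,\pi_k)$ and the reverse direction choosing the pattern occurrence canonically (minimal choices of the last two positions) so that the inequalities on $e$ follow. One small correction: in your forward step it is the index $\ell$ that may need to be re-chosen so that $\pi_\ell<\min(\pi_j,\pi_k)$ (keeping $j,k$ as the last two positions), rather than keeping $\ell$ fixed and hunting for a new $k'>j$ with $\pi_{k'}>\pi_\ell$, which need not exist; the existence of such an $\ell$ does follow from your difference identity applied to both pairs $(i,j)$ and $(i,k)$.
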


\begin{proof}
We show that $\Theta(\Sn_n(2134,2143))=\I_n(>, -, \geq)$.

Let  $e \in \I_n$ satisfy $e_i > e_j$ and $e_i \geq e_k$ for some $i < j < k$.  Let $\pi = \Theta^{-1}(e)$.
Then $\min\{\pi_j,\pi_k\} > \pi_i$ and, since $e_i>e_j$,
 there must exist $a < i$ such that both  $\pi_a > \pi_i$ and  $\min\{\pi_j,\pi_k\} > \pi_a$. 
 Thus $\pi_a\pi_i\pi_j\pi_k$ forms either a $2134$ or a $2143$.

 Conversely, suppose, for some $\pi \in \Sn_n$, that  $\pi_a\pi_i\pi_j\pi_k$ is one of the patterns $2134$ or $2143$ and let
 $e = \Theta(\pi)$.
 Let $j'$ be the smallest index larger than $i$ for which $\pi_{j'} > \pi_a$.
 Then $\pi_{i+1}, \ldots, \pi_{j'-1}$ are all smaller than $\pi_a < \pi_{j'}$ and so $e_i>e_{j'}$.
 Let $k'$ be the smallest index larger than $j'$ such that $\pi_{k'} > \pi_a$.
 Then, with the possible exception of $\pi_{j'}$, all of $\pi_{i+1}, \ldots, \pi_{k'-1}$ are smaller than $\pi_{k'}$ (since these entries are necessarily smaller than $\pi_a$).  In addition, since
 $\pi_i<\pi_a < \pi_{k'}$, we have  $e_i \geq e_{k'}$. Thus $e$ has the pattern $(>,-,\geq)$.
 \end{proof}
 
From our calculations, it appears that the ascent polynomial for $\I_n(>,-,\geq)$ is the same as that for 1806A, which was palindromic.  This has been confirmed by Kim and Lin \cite{KimLin}.
Since $\Theta$ sends descents to ascents, this implies that the descent polynomial for  $\Sn_n(2134,2143)$ is palindromic.  This is not true in general for permutations avoiding pairs of patterns of length 4, even those counted by the large Schr\"oder numbers.  For example, it is not true of
$\Sn_n(1234,2134)$ or $\Sn_n(1324,2314)$.  However Fu, Lin, and Zeng have recently shown that
the descent polynomial for the separable permutations $\Sn_n(2413,3142)$ is $\gamma$-positive and therefore
palindromic \cite{FuLinZeng}.

\subsubsection{Class {\bf  1806C}: Avoiding $e_i \geq e_j$ and $e_i >e_k$}

\begin{theorem}
$|\I_n( \geq, -, >)|=R_{n-1}$.
\end{theorem}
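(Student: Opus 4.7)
The plan is to prove $|\I_n(\geq,-,>)| = R_{n-1}$ by giving a bijection with $\I_n(>,-,\geq)$, which was shown in the previous theorem (Class~1806B) to be counted by $R_{n-1}$.

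First, observe that, in terms of classical length-$3$ patterns,
\[
\I_n(\geq,-,>) = \I_n(\{100,110,210,201\})
\quad\text{and}\quad
\I_n(>,-,\geq) = \I_n(\{100,101,210,201\}).
\]
Both sets sit inside $\I_n(\{100,210,201\}) = \I_n(>,-,>)$ and differ only in whether the additional forbidden pattern is $110$ (i.e., $e_i = e_j > e_k$) or $101$ (i.e., $e_i = e_k > e_j$). The sought-after bijection must therefore exchange occurrences of these two configurations.

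I would construct the bijection $\phi$ iteratively: given $e \in \I_n(\geq,-,>)$, locate the rightmost $101$-occurrence---positions $i<j<k$ with $e_i = e_k > e_j$ and $k$ maximal---and swap the entries $e_j$ and $e_k$. Since $e_j, e_k < k$, the swap preserves the inversion-sequence bounds, and it locally converts a $101$-pattern into a $110$-pattern at the same three positions. Repeat until no $101$-occurrence remains; the resulting sequence should then lie in $\I_n(>,-,\geq)$. The inverse map proceeds analogously, iteratively replacing the leftmost $110$-occurrence in an element of $\I_n(>,-,\geq)$ by a $101$-pattern.

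The main obstacle is to verify that each swap preserves the avoidance of $\{100, 210, 201\}$, that the iteration terminates, and that $\phi$ is injective. These checks will rest on the following structural description of $\I_n(\geq,-,>)$: if $t$ is the first position attaining $\max(e) = M$ and $t < n$, then $e_{t+1}$ can be arbitrary in $\{0,\ldots,M\}$, but $e_{t+2} = e_{t+3} = \cdots = e_n = M$. This rigidity sharply constrains where $101$-occurrences can appear, and I expect it to imply that each swap cleanly turns a $101$-pattern into a $110$-pattern without creating any new forbidden pattern. A self-contained alternative would be to use the same structural decomposition to derive a functional equation for the generating function $\sum_{n \geq 1} |\I_n(\geq,-,>)|\, x^n$ and verify that it matches $x\, R(x)$, where $R(x) = (1 - x - \sqrt{1-6x+x^2})/(2x)$ is the Schr\"oder generating function.
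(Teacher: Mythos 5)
Your overall strategy---a bijection with $\I_n(>,-,\geq)$ (class 1806B) that trades $101$-occurrences for $110$-occurrences---is plausible and genuinely different from the paper's argument, which instead decomposes each $e\in\I_n(\geq,-,>)$ at its last maximal entry (the largest $t$ with $e_t=t-1$) to obtain the functional equation $E(x)=x+xE(x)+E(x)^2$ for the generating function, whence the Schr\"oder numbers. However, as written your proposal has a genuine gap: everything that would actually constitute the proof is deferred. You explicitly leave unverified that each swap preserves avoidance of $\{100,210,201\}$, that no new $101$-occurrences are created (so that the iteration terminates), and that the procedure is invertible; the last point is particularly delicate because when $k$ is maximal there may be several choices of $(i,j)$, and the inverse must locate exactly the pair $(j,k)$ that was swapped. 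None of these is routine, and ``I expect it to imply'' is not a proof.

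Moreover, the structural lemma you intend to rest these verifications on is false as stated. Take $e=(0,1,0,3,0)\in\I_5$: the maximum $M=3$ is first attained at $t=4<n$, and $e_{t+1}=0\in\{0,\ldots,M\}$, yet $e\notin\I_5(\geq,-,>)$ because positions $2<3<5$ give $e_2\geq e_3$ and $e_2>e_5$ (a forbidden $100$). So $e_{t+1}$ is \emph{not} arbitrary in $\{0,\ldots,M\}$: whenever the prefix contains a weak descent $e_a\geq e_b$ with $a<b\leq t$, every later entry, including $e_{t+1}$, must be at least $e_a$. (The claim $e_{t+2}=\cdots=e_n=M$ is correct.) Since your plan is to deduce the correctness of the swaps from this ``rigidity,'' the error propagates into the heart of the argument. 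The claim $e\in\I_n(\geq,-,>)\Leftrightarrow e\in\I_n(\{100,110,210,201\})$ and the reduction to class 1806B are fine, but to complete this route you would need the corrected characterization and then the full case analysis for the swaps; alternatively, the paper's decomposition by the last maximal entry avoids these difficulties entirely.
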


\begin{proof}

We will construct a generating function for $\I_n( \geq, -, >)$.

Let $E(x) = \sum_{i = 1}^{\infty} |\I_n( \geq, -, >)|x^n$.  We will show that $E(x)$ satisfies \[E(x) = x + xE(x) + E^2(x),\] whose solution is \[E(x) = \frac{1-x-\sqrt{x^2-6x+1}}{2}.\]  This implies that $|\I_n( \geq, -, >)|$ is the $(n-1)$th large Schr\"{o}der number.

Let $e = e_1e_2 \ldots e_n \in \I_n( \geq, -, >)$.  Let $e_{t}$ be the latest maximal entry of $e$; that is, $\max \{i \mid e_i = i-1\}$.  If $t = 1$, then either $e = (0)$ or $e = (0, f_1, f_2, \ldots, f_{n-1})$ for some $(f_1, f_2, \ldots f_{n-1}) \in \I_{n-1}( \geq, -, >)$.

Now consider the case where $t>1$.  Notice that $e_{t+1} \leq t-1$ since $e_{t+1}$ cannot be maximal.  This implies that $(e_1,e_2 ,\ldots, e_{t-1},e_{t+1})$ is an inversion sequence of length $t$.  Furthermore, it is straightforward to show that $(e_1,e_2, \ldots, e_{t-1},e_{t+1}) \in \I_n( \geq, -, >)$.  

Additionally, for all $e_j$ where $j >t+1$, we must have $e_j \geq t-1$; if this is not the case, and there exists some $j$ where $e_j <t-1$, then we have $t<t+1<j$ where $e_t \geq e_{t+1}$ and $e_t>e_j$.  Therefore $(e_t - t+1, e_{t+2}-t+1, \ldots, e_n - t+1) \in \I_{n-t}( \geq, -, >)$.

Conversely, for any sequences $(e_1, e_2, \ldots, e_t) \in \I_t( \geq, -, >)$ and $(f_1, f_2, \ldots, f_{n-t}) \in \I_{n-t}( \geq, -, >)$, we can construct the inversion sequence \[(e_1, e_2, \ldots, e_{t-1}, f_1+t-1, e_t, f_2+t-1, f_3+t-1, \ldots, f_{n-t}+t-1).\]  It is straightforward to show that this inversion sequence is in $\I_n( \geq, -, >)$ and the last maximal entry is in the $t$-th position.
\end{proof}


\subsubsection{Class {\bf  1806D}: Avoiding $e_i \geq e_j \not = e_k$ and $e_i \geq e_k$}

Due to a result of Kremer \cite{kremer}, we know that $|\Sn_n(4321,4312)|=R_{n-1}$.  We can prove that $|\I_n(\geq, \neq, \geq)|$ is also enumerated by $R_{n-1}$ by constructing a bijection between $\Sn_n$ and $\I_n$ that restricts to a bijection between $\Sn_n(4321,4312)$ and $\I_n(\geq, \neq, \geq)$.  This bijection is useful for another class of inversion sequences: it will be used later to prove results related to $\I_n(>,\neq,>)$ (classified as 3720).

\begin{definition} \label{def phi}
Let $\pi \in S_n$ and define $\phi(\pi) = e_1e_2\ldots e_n \in \I_n$ as follows, starting with $e_n$ and defining entries in reverse order.

\begin{enumerate}
\item $e_n = \pi_n-1$
\item For $1\leq i <n$, 
	\begin{enumerate}
		\item if $\pi_i \leq i$, then $e_i = \pi_i-1$.  
		\item otherwise, if $\pi_i$ is the $k$-th largest element of $\{\pi_1, \ldots, \pi_i\}$
       then $e_i$ is the $k$-th smallest element of the set $\{e_{i+1}, ..., e_n\}$.
	\end{enumerate}
\end{enumerate}
\end{definition}

\begin{lemma}
For $\pi \in \Sn_n$, $\phi(\pi) \in \I_n$.
\end{lemma}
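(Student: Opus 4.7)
The plan is to prove $\phi(\pi) \in \I_n$ by downward induction on $i$, establishing $0 \leq e_i \leq i-1$ starting from $i=n$. The base case is immediate: $\pi_n \in [n]$ forces $e_n = \pi_n - 1 \in \{0,\ldots,n-1\}$. For the inductive step, case (a) of the definition is trivial, so the content is entirely in case (b). There $e_i$ equals the $k$-th smallest element of the already-defined set $\{e_{i+1},\ldots,e_n\}$, so the lower bound $e_i \geq 0$ is automatic from the inductive hypothesis; what remains is to check that (i) the $k$-th smallest actually exists, i.e., $k \leq n-i$, and (ii) that this value is at most $i-1$.

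For (i), note that the $k-1$ entries of $\{\pi_1,\ldots,\pi_i\}$ strictly larger than $\pi_i$ must come from $\{\pi_i+1,\ldots,n\}$, so $k-1 \leq n - \pi_i$, and since $\pi_i \geq i+1$, we get $k \leq n - \pi_i + 1 \leq n - i$. For (ii), I will use a complementary-counting lemma. Let $A = \{j \leq i : \pi_j > i\}$ and $T = \{j > i : \pi_j \leq i\}$. Among the $i$ values $\{1,\ldots,i\}$, the number placed in positions $\leq i$ is $i - |A|$, so the number placed in positions $> i$ is $|A|$, giving $|T| = |A|$. Crucially, for every $j \in T$ we have $\pi_j \leq i < j$, so case (a) of the definition of $\phi$ applies at position $j$, yielding $e_j = \pi_j - 1 \leq i-1$. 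Meanwhile, $k$ is the number of $j \leq i$ with $\pi_j \geq \pi_i$, and since $\pi_i > i$, each such $\pi_j$ is also $>i$; hence those positions lie in $A$, giving $k \leq |A| = |T|$. Thus at least $k$ of the values in $\{e_{i+1},\ldots,e_n\}$ are bounded above by $i-1$, and the $k$-th smallest is $\leq i-1$, as required.

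The main obstacle is recognizing the pigeonhole identity $|A| = |T|$ together with the inequality $k \leq |A|$, since $k$ is defined by the ranking of $\pi_i$ rather than directly by a counting of values $> i$. Once that identity is extracted, no stronger inductive invariant about $\{e_{i+1},\ldots,e_n\}$ is needed: the specific entries contributed by $T$ under case (a) are already enough to pin down the $k$-th smallest below $i$. The argument also only ever invokes the inductive hypothesis on indices $j \in T$, where case (a) makes $e_j$ explicit, so no circularity arises in the recursive definition of $\phi$.
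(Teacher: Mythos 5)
Your proof is correct and follows essentially the same strategy as the paper's: downward induction on $i$, with case (a) immediate, and in case (b) showing that at least $k$ of the positions $j>i$ satisfy $\pi_j\leq i$ (hence contribute values $e_j=\pi_j-1\leq i-1$), so the $k$-th smallest element of $\{e_{i+1},\ldots,e_n\}$ is at most $i-1$. The only difference is cosmetic: you obtain the count via the identity $|A|=|T|$ and $k\leq |A|$, while the paper chains through counts of entries larger and smaller than $\pi_i$; your version is a slightly cleaner route to the same key inequality.
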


\begin{proof}
To show that $\phi(\pi) \in \I_n$, we need to prove that $0 \leq e_i \leq i-1$ for every $i \in [n]$.  We will use an inductive argument, starting with $e_n$, to show this.  We defined $e_n=\pi_n-1$; since $1 \leq \pi_n \leq n$, it follows that $0 \leq e_n \leq n-1$, as desired.  Now consider $e_i$ and assume that for all $e_j$ among $e_{i+1}e_{i+2}\ldots e_n$, $0 \leq e_j \leq j-1$.  If $\pi_i \leq i$, then $e_i = \pi_i - 1$ and it immediately follows that $0 \leq e_i \leq i-1$.  

If instead $\pi_i > i$, assume that $\pi_i$ is in the $k$-th largest element of $\{\pi_1, \ldots, \pi_i\}$.  Notice that each value of $\{e_{i+1}, ..., e_n\}$ corresponds to an entry $\pi_j$ where $i<j$ and $\pi_j\leq j$ (any entry $\pi_j$ where $\pi_j>j$ will repeat a value).  So, there are $n-\pi_i-k+1$ entries $\pi_j$ such that $i<j$ and $\pi_j>\pi_i$; in turn, this implies that there are $(n-i)-(n-\pi_i-k+1)=\pi_i-(i+1)+k$ entries $\pi_j$ such that $i<j$ and $\pi_j<\pi_i$.  At a maximum, $\pi_i-(i+1)$ of these entries are greater than $i$; this leaves $k$ entries occurring after $\pi_i$ that are less than or equal to $i$.  Each of these entries corresponds to a value in $\{e_{i+1}, ..., e_n\}$ that is less than or equal to $i-1$.  It follows, that $e_i \leq i-1$, as desired.
\end{proof}

\begin{lemma}
$\phi: \Sn_n \rightarrow \I_n$ is a bijection.
\end{lemma}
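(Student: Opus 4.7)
The plan is to prove injectivity of $\phi$; since the previous lemma shows $\phi$ maps into $\I_n$ and $|\Sn_n|=|\I_n|=n!$, injectivity is equivalent to bijectivity. Suppose $\phi(\pi)=\phi(\pi')=e$. I will show $\pi_i=\pi'_i$ for all $i$ by backward induction. The base case is immediate: $\pi_n=e_n+1=\pi'_n$ by Definition \ref{def phi}(1). For the inductive step, once $\pi_j=\pi'_j$ is known for every $j>i$, the set $R_i:=[n]\setminus\{\pi_{i+1},\ldots,\pi_n\}=\{\pi_1,\ldots,\pi_i\}$ is fixed, and similarly for $R'_i=R_i$.

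The heart of the argument is the following dichotomy, which tells us which branch of Definition \ref{def phi}(2) was used at position $i$ purely from $e$:
\[
\pi_i>i \iff e_i\in\{e_{i+1},\ldots,e_n\}.
\]
The ``$\Rightarrow$'' direction is immediate from rule (2b), which defines $e_i$ to be one of the values $e_{i+1},\ldots,e_n$. The ``$\Leftarrow$'' direction is the substantive step, and I would prove its contrapositive by a chain argument. Assume $\pi_i\le i$, so by (2a) we have $e_i=\pi_i-1$, and suppose for contradiction that $e_j=e_i$ for some $j>i$; choose the \emph{largest} such $j$. If rule (2a) applied at $j$, then $\pi_j=e_j+1=\pi_i$, contradicting the fact that $\pi$ is a permutation. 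If instead rule (2b) applied at $j$, then by definition $e_j$ is itself an element of $\{e_{j+1},\ldots,e_n\}$, producing some $j'>j$ with $e_{j'}=e_j=e_i$, which contradicts the maximality of $j$.

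Given this dichotomy, the inductive step finishes quickly. Both $\pi$ and $\pi'$ must use the same rule at position $i$, since the condition $e_i\in\{e_{i+1},\ldots,e_n\}$ depends only on $e$. If rule (2a) applies, then $\pi_i=e_i+1=\pi'_i$. If rule (2b) applies, then $\pi_i$ is the $k$-th largest element of $\{\pi_1,\ldots,\pi_i\}=R_i$, where $k$ is the rank of $e_i$ in the set $\{e_{i+1},\ldots,e_n\}$; since $R_i=R'_i$ and the rank $k$ is determined entirely by $e$, we obtain $\pi_i=\pi'_i$.

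I expect the chain argument in the ``$\Leftarrow$'' direction of the dichotomy to be the only subtle step; choosing the \emph{largest} (rather than smallest) witness $j$ is what makes the iteration terminate in a genuine contradiction, since the recursive nature of (2b) always pushes the witness forward. Everything else is routine bookkeeping on sets of available values $R_i$ and on ranks in $\{e_{i+1},\ldots,e_n\}$, and the counting identity $|\Sn_n|=|\I_n|$ then upgrades injectivity to bijectivity.
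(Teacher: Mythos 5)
Your proof is correct and takes essentially the same route as the paper: the paper simply exhibits the explicit inverse map, reconstructing $\pi$ from $e$ in reverse order and using whether $e_i$ recurs among $e_{i+1},\ldots,e_n$ to decide which branch of Definition \ref{def phi} was applied at position $i$ --- exactly the reconstruction underlying your backward-induction injectivity argument. If anything, yours is the more complete write-up, since you actually prove the dichotomy $\pi_i>i \iff e_i\in\{e_{i+1},\ldots,e_n\}$ (via the largest-witness chain argument) that the paper's inverse construction tacitly assumes.
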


\begin{proof}
Let $e = e_1 e_2 \ldots e_n \in \I_n$.  We can define the inverse image of $e$, $\phi^{-1}(e) = \pi_1\pi_2\ldots\pi_n$ in reverse order, starting with $\pi_n$ so that $\pi_n = e_n+1$.  For $1\leq i <n$, if $e_i \neq e_j$ for all $j$ where $i<j \leq n$, then $\pi_i = e_i+1$; otherwise, if $e_i$ is the $k$-th smallest value of $\{e_{i+1}, \ldots, e_n\}$, $\pi_i$ is the $k$-th largest value of $[n]$ that does not appear among $\pi_{i+1}, \ldots,\pi_n$.
\end{proof}

It is interesting to note that for any $\pi \in \Sn_n$, $\exc(\pi)=\repeats(\phi(\pi))$, where $\exc(\pi)$ is the number of positions $i$ such that $\pi_i > i$.

Now, we show that $\phi$ restricts to a bijection between $\Sn_n(4321,4312)$ and $\I_n(\geq, \neq, \geq)$ by proving the following:

\begin{theorem} \label{thm:1806D}
$\phi(\Sn_n(4321,4312))=\I_n(\geq, \neq, \geq)$
\end{theorem}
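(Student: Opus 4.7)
The plan is to prove the two inclusions separately using the following characterizations. First, $\pi \in \Sn_n$ avoids both $4321$ and $4312$ if and only if there is no quadruple $a<b<c<d$ with $\pi_a > \pi_b$ and $\max(\pi_c,\pi_d) < \pi_b$; equivalently, for every inversion $(a,b)$, at most one position $c > b$ satisfies $\pi_c < \pi_b$. Second, $e \in \I_n$ contains $(\geq,\neq,\geq)$ if and only if there is some position $i$ for which the entries $e_j$ with $j > i$ and $e_j \leq e_i$ are not all equal. I will also rely on the immediate observation that $e_j = \pi_j - 1$ whenever $\pi_j \leq j$, so at non-excedance positions the inversion sequence directly records the permutation value, while at excedance positions $e_j$ reproduces an entry appearing later in $e$.

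For the inclusion $\phi(\Sn_n(4321,4312)) \supseteq \I_n(\geq,\neq,\geq)$, I would argue the contrapositive: suppose $e = \phi(\pi)$ has an occurrence $i<j<k$ with $e_i \geq e_j$, $e_j \neq e_k$, and $e_i \geq e_k$, and deduce that $\pi$ contains $4321$ or $4312$. The cleanest case is when all three of $i,j,k$ are non-excedance positions: then $e_\ell = \pi_\ell - 1$ for $\ell \in\{i,j,k\}$, giving $\pi_i > \pi_j$, $\pi_i > \pi_k$, and $\pi_j \neq \pi_k$. I would then argue that $\pi_i$ cannot be a left-to-right maximum (otherwise $\{\pi_1,\ldots,\pi_{i-1}\} = [i-1]$, leaving no room for the strictly smaller values $\pi_j,\pi_k$ at later positions), so some $a < i$ with $\pi_a > \pi_i$ provides the fourth point, yielding $4321$ or $4312$ at $a,i,j,k$. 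For cases where any of $i,j,k$ is an excedance position, I would use the defining property of $\phi$ to replace that position with a non-excedance position carrying the same $e$-value (or a related $\pi$-value), thereby reducing to the cleanest case.

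For the reverse inclusion $\phi(\Sn_n(4321,4312)) \subseteq \I_n(\geq,\neq,\geq)$, I would also argue the contrapositive: given a $4321$ or $4312$ occurrence at positions $a<b<c<d$ in $\pi$, exhibit a $(\geq,\neq,\geq)$ occurrence in $e = \phi(\pi)$. When $\pi_b \leq b$, $\pi_c \leq c$, and $\pi_d \leq d$, the positions $b,c,d$ work directly since then $e_b = \pi_b - 1 > \max(\pi_c,\pi_d) - 1 = \max(e_c,e_d)$ and $e_c \neq e_d$. When $\pi_b > b$, the entry $e_b$ repeats some later non-excedance value; unwinding the recursive definition of $\phi$, I would locate a substitute position $b'$ with $b < b' \leq d$ whose $e$-value is at least $\max(e_c,e_d)$, and use $(b',c,d)$ or a similar triple. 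The analogous substitutions handle excedances at $c$ or $d$.

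The main obstacle is the bookkeeping in the excedance subcases in both directions. When $\pi_i > i$, the entry $e_i$ is not directly $\pi_i - 1$ but rather reproduces a later non-excedance entry determined by the rank of $\pi_i$ among $\{\pi_1,\ldots,\pi_i\}$, so translating between patterns in $\pi$ and patterns in $e$ requires tracing through this substitution. I expect the argument to hinge on a supporting lemma that pins down the precise relationship between $e_i$ and the permutation values at positions $i$ and $i'$ when $i$ is an excedance and $i'$ is the associated non-excedance, and that verifying this lemma (by induction on the reverse recursive construction of $\phi$) will be the technical crux of the proof.
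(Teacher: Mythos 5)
Your characterizations of both pattern classes are correct, and your treatment of the case where the relevant positions are non-excedances essentially matches the paper's: the left-to-right-maximum argument you give for producing the fourth point $a<i$ with $\pi_a>\pi_i$ is sound (the paper reaches the same conclusion by counting how many entries exceed $\pi_i$). The genuine gap is exactly where you yourself place it: the excedance case is never carried out, and the strategy you propose for it --- replacing an excedance position by a later non-excedance position carrying the same $e$-value --- fails for the position that matters most, namely the leftmost index of the occurrence (the ``3'' of the permutation pattern, resp.\ the index $i$ of the $(\geq,\neq,\geq)$ occurrence). Any position carrying the same $e$-value lies strictly to the right of it, so the substitution destroys the ordering $i<j<k$ (resp.\ $b<c<d$) and you cannot ``reduce to the cleanest case'' this way. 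A vaguely described supporting lemma that you expect to be ``the technical crux'' does not close this.

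The missing idea, which is the actual content of the paper's proof, is to exploit the rank correspondence built into $\phi$ rather than a positional substitution: when $\pi_i>i$, the entry $e_i$ is the $t$-th smallest element of $\{e_{i+1},\dots,e_n\}$ precisely when $\pi_i$ is the $t$-th largest element of $\{\pi_1,\dots,\pi_i\}$. In one direction, the ``4'' of the permutation pattern sits before $i$ and forces $t\ge 2$, so the two smallest \emph{distinct} values of $\{e_{i+1},\dots,e_n\}$ serve as fresh witnesses $j',k'$ --- one abandons the original $j,k$ entirely instead of relocating $i$. In the other direction, the two distinct values $e_j,e_k\le e_i$ lying in the suffix force $t\ge 2$, hence some $a<i$ with $\pi_a>\pi_i$, and a short count (there are $n-\pi_i<n-i$ entries exceeding $\pi_i$, at least one of them to the left of $i$) yields two entries smaller than $\pi_i$ to its right, completing the $4321$ or $4312$. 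Without this rank argument, or an equivalent lemma actually stated and proved, the proposal does not constitute a proof.
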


\begin{proof}
Consider some $\pi \in \Sn_n$ that contains an occurrence of 4321 or 4312.  So, there exists some $a<i<j<k$ such that $\pi_a\pi_i\pi_j\pi_k$ form a 4321 or 4312 pattern.  We will show that there exists an occurrence of the pattern $(\geq, \neq, \geq)$ in $\phi(\pi) = e$.

We must consider two cases.  If $\pi_i\leq i$, then $j >\pi_i>\pi_{j}$ and $k>\pi_i>\pi_{k}$.  Therefore, $e_i = \pi_i-1$, $e_{j} = \pi_{j}-1$ and $e_{k} = \pi_{k} -1$.  So, since $\pi_i>\max\{\pi_{j},\pi_{k}\}$ and $\pi_{j} \neq \pi_{k}$,  $e_i, e_{j}, e_{k}$ forms an occurrence of $(\geq, \neq, \geq)$.

Now assume $\pi_i> i$.  Recall that $e_i$ is the $t$-th smallest element of $\{e_{i+1},\ldots,e_n\}$ if $\pi_i$ is the $t$-th largest element of $\{\pi_1,\pi_2,\ldots,\pi_i\}$.   Since $\pi_a$ is larger than and occurs before $\pi_i$, we know that $t$ is at least 2.  So, if $e_{j'}$ and $e_{k'}$, where $j' < k'$, are the two smallest distinct values in the set $\{e_{i+1},\ldots,e_n\}$, we are guaranteed that $e_i \geq e_{j'}$ and $e_i \geq e_{k'}$; so, $e_i, e_{j'}, e_{k'}$ form the pattern $(\geq, \neq, \geq)$.

For the converse, let $\pi \in \Sn_n$ such that $e= \phi(\pi)$ has indices $i < j < k$ such $e_i \geq e_j$, $e_i \geq e_k$, and $e_i \not = e_k$.  We show that $\pi$ contains 4321 or 4312.

If $e_i \in \{e_{i+1}, \ldots, e_n\}$ then,  by definition of $\phi$, $\pi_i > i$ and $e_i$ is the $k$th smallest element of $\{e_{i+1}, \ldots, e_n\}$ for some $k \geq 2$, noting that  the distinct elements $e_j$ and $e_k$ are both at most $e_i$.  Then, again by definition of $\phi$, $\pi_i$ is the $k$th largest element of $\{\pi_1, \ldots, \pi_i\}$, so there is some $a < i$ with $\pi_a > \pi_i$.  To show that $\pi$ contains one of the patterns 4321 or 4312, it remains to show there are at least two elements of $\{\pi_{i+1}, \ldots, \pi_n\}$ that are smaller than $\pi_i$.  The number of elements in $\pi$ that are larger than $\pi_i$ is $n-\pi_i < n-i$.  At least one of these is to the left of $\pi_i$ in $\pi$.  Thus at most $n-i-2$ can be in the $n-i$ positions to the right of $\pi_i$.

If $e_i  \not \in \{e_{i+1}, \ldots, e_n\}$ then,  by definition of $\phi$, $\pi_i \leq i$ and $e_i = \pi_i -1$.
Since $e_j \leq e_i \leq i < j$ and $e_k \leq e_i \leq i < k$, we have $e_j = \pi_j -1$ and $e_k = \pi_k -1$, so that $\pi_i\pi_j\pi_k$ has the pattern 321 or 312.  To show that $\pi$ contains 4321 or 4312, it remains to show there is an $a<i$ such that $\pi_a > \pi_i$.  Note that there are $n-\pi_i  \geq n-i$ elements of $\pi $ larger than $\pi_i$.  The $n-i$ positions to the right of $\pi_i$ hold at least two elements smaller that $\pi_i$.  So there must be a larger element to the left of $\pi_i$, that is, an $a<i$ such that $\pi_a > \pi_i$.

\end{proof}

Our calculations suggested that the ascent polynomial for the inversion sequences in $\I_n(\geq, \neq, \geq)$ is the same as the (symmetric) ascent polynomial for 1806A.
This has been proven by Kim and Lin in \cite{KimLin}.
It also appears that the number of these inversion sequences with $k$ ``repeats'' is counted by A090981, the
number of Schr\"oder paths with $k$ ascents.

\subsection{Class 2074: Baxter numbers}
\label{section:2074}


In the earlier version of this paper \cite{PAISII}, it was conjectured that the avoidance sequence for the pattern $(\geq, \geq, >)$ is A001181 in the OEIS. This sequence counts the Baxter permutations, which is a result of Chung, et al.\ in \cite{chung}. A {\em Baxter permutation} $\pi$ is one that avoids  the vincular patterns 3\underline{14}2 and 2\underline{41}3; that is, there 
is no $i < j < k$ such that $\pi_j < \pi_k <  \pi_i <\pi_{j+1}$ or  $\pi_j > \pi_k > \pi_i > \pi_{j+1}$.

Kin and Lin prove this via the so-called obstinate kernel method \cite{KimLin}:
\begin{theorem}[\cite{KimLin}]
$|I_n(\geq,\geq,>)|$ is the number of Baxter permutations of $[n]$.
\end{theorem}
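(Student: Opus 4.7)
The plan is to reprove this following the obstinate kernel method strategy of Kim--Lin by translating the avoidance condition into a succession rule with two catalytic statistics that match a known generating tree for Baxter numbers. First I would analyze how an element $e \in \I_n(\geq,\geq,>)$ can be extended to $e \cdot k \in \I_{n+1}(\geq,\geq,>)$. The value $k$ fails precisely when some $i<j\leq n$ with $e_i \geq e_j$ gives rise to the forbidden triple $e_i \geq e_j \geq k$ with $e_i > k$. A direct case split on whether $e_i > e_j$ or $e_i = e_j$ shows that $k$ is admissible if and only if $k$ exceeds every ``witnessed'' value; consequently, the set of admissible $k$ is an initial segment of $\{0,1,\ldots,n\}$ shifted by the right amount and, crucially, can be parametrized by only two integers depending on $e$ rather than the full sequence.

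Next I would define two statistics $h(e)$ and $k(e)$ on $\I_n(\geq,\geq,>)$, intended to count respectively the admissible extension values that are ``small'' (i.e., at most $e_n$) and those that are ``large'' (strictly greater than $e_n$), and verify directly from the case analysis above that the generating tree rooted at $(1,1)$ satisfies the succession rule
\begin{equation*}
(h,k) \leadsto (1,k+1),\,(2,k+1),\,\ldots,\,(h,k+1),\,(h+1,1),\,(h+1,2),\,\ldots,\,(h+1,k).
\end{equation*}
This is the classical Baxter succession rule $\Omega_{\text{Baxter}}$ (see Chung et al.\ \cite{chung} and Bouvel et al.\ \cite{BouvelGuerriniEtAl}), so matching it establishes the enumeration. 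The combinatorial heart of the argument is showing that when one appends an admissible value $j$ to $e$, the new pair $(h(e\cdot j), k(e\cdot j))$ behaves exactly as the rule dictates: small appended values reduce $h$ but bump $k$, while appending a ``large'' value bumps $h$ by one and determines $k$ from the position of $j$ within the large admissible block.

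From the succession rule I would assemble the refined generating function
\begin{equation*}
F(t;y,z) = \sum_{n\geq 1} t^n \sum_{e \in \I_n(\geq,\geq,>)} y^{h(e)} z^{k(e)}
\end{equation*}
and derive a functional equation of the form $K(t,y,z)\,F(t;y,z) = R(t,y,z) + A(t,y)\,F_1 + B(t,z)\,F_2$, where $F_1, F_2$ are specializations at boundary values of $y, z$. The obstinate kernel method then applies: one identifies the finite group of birational transformations of $(y,z)$ that preserve the kernel $K$, substitutes its non-trivial elements to obtain a linear system in the unknown specializations, and solves. Verifying that the resulting closed form agrees with the known Baxter generating function then closes the argument. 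The main obstacle, I anticipate, is the second step: pinning down the statistics $h(e), k(e)$ whose joint succession on $\I_n(\geq,\geq,>)$ is literally $\Omega_{\text{Baxter}}$ rather than some equivalent variant, since slight misdefinitions of ``admissible large block'' will yield a succession rule that only matches Baxter numbers up to a transformation, forcing extra bijective work to reconcile.
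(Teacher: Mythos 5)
A preliminary remark: the paper does not actually prove this theorem --- it quotes it from Kim and Lin, who establish it by essentially the route you outline (a growth rule with two catalytic statistics followed by the obstinate kernel method) --- so your overall strategy is the intended one. The genuine gap is at the step you yourself identify as the crux, and it is fatal as stated: the statistics you propose do not satisfy $\Omega_{\text{Baxter}}$. Take $e=(0,0)\in\I_2(\geq,\geq,>)$: every value in $\{0,1,2\}$ can be appended, and $e_2=0$, so your label is $(h,k)=(1,2)$. Its children $(0,0,0),(0,0,1),(0,0,2)$ again each admit every value in $\{0,1,2,3\}$, and with last entries $0,1,2$ your labels are $(1,3),(2,2),(3,1)$, whereas $\Omega_{\text{Baxter}}$ requires a node labeled $(1,2)$ to produce $(1,3),(2,1),(2,2)$. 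Worse, $e=(0,1,0)$ has admissible set $\{1,2,3\}$ and $e_3=0$, so your $h$ equals $0$, which is not a legal label at all. Replacing $e_n$ by the running maximum $\maxx(e)$ as the cut point repairs the second defect but not the first: writing $a$ for the number of admissible values at most $\maxx(e)$ and $b$ for the number exceeding it, one computes that the actual rule is
\begin{equation*}
(a,b)\ \rightsquigarrow\ (1,b+1),\ (1,b+1),(2,b+1),\ldots,(a-1,b+1),\ (a+1,b),(a+2,b-1),\ldots,(a+b,1),
\end{equation*}
with $(1,b+1)$ occurring twice whenever $a\geq 2$; this is not $\Omega_{\text{Baxter}}$ (it reproduces the $(1,3),(2,2),(3,1)$ children of $(1,2)$ found above), even though it does, nontrivially, generate the Baxter numbers.

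This makes the proposal internally inconsistent as well as incomplete. If the appending tree really carried $\Omega_{\text{Baxter}}$, the proof would be finished at step two and your third step would be superfluous; since it does not, the entire content of the theorem lives in that third step --- turning the displayed (non-Baxter) rule into a functional equation for $F(t;y,z)$, identifying the group of the kernel, solving the resulting system, and matching the Baxter generating function --- and none of that is set up, let alone executed, in the proposal. To complete the argument you must either carry out that kernel-method computation for the correct succession rule (this is essentially what Kim and Lin do), or produce a genuinely different growth procedure and pair of statistics realizing $\Omega_{\text{Baxter}}$ on $\I_n(\geq,\geq,>)$; the small cases above show that the naive ``append a last entry and cut at $e_n$'' labeling cannot be it.
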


Note that the Baxter permutations contain the separable permutations $\Sn_n(3142,2413)$ which are counted by the large Schr\"oder numbers.  Similarly, $\I_n(\geq,\geq,>)$ contains the inversion sequences  $\I_n(\geq, -, >)$ (which define class 1806C) which are also counted by the  large Schr\"oder numbers. 
It would be nice to find a bijection between $(\geq,\geq,>)$-avoiding inversion sequences  and Baxter permutations that restricts to a bijection between $(\geq,-,>)$-avoiding inversion sequences  and separable permutations.

\subsection{Classes 2549(A,B,C):  Conjectured to be counted by \allowbreak$\Sn_n(4231,\allowbreak42513)$}
\label{section:2549}

Albert, et al.\ \cite{albert2549} showed that
\[
|\Sn_n(4231,42513)| = \sum_{i=0}^n \frac{n-i}{2i+n} {2i+n \choose i},
\]
which is sequence A098746 in the OEIS \cite{Sloane}. It appears from our calculations that the inequivalent classes $\I_n(>,-,>)$, $\I_n(>, \neq, \geq)$, and $\I_n(\geq, \neq, >)$ have the same avoidance sequence A098746.  We will show at least that the classes 2549A ($\I_n(>,-,>)$) and 2549C ($\I_n(\geq, \neq, >)$) are Wilf-equivalent.

\begin{theorem}
The patterns $(>,-,>)$ and $(\geq, \neq, >)$, defining classes {\rm 2549A} and {\rm 2549C} respectively, are Wilf-equivalent.
\label{thm:2549AC}
\end{theorem}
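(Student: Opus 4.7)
The plan is to prove Wilf-equivalence via an explicit bijection $\psi:\I_n(>,-,>) \to \I_n(\geq,\neq,>)$ together with an explicit inverse.

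First I would give a unified description of both classes. For $e \in \I_n$ and an index $i$, call $k > i$ a \emph{descent position of $i$} if $e_k < e_i$. A direct check gives the following: $e \in \I_n(>,-,>)$ iff every index has at most one descent position, while $e \in \I_n(\geq,\neq,>)$ iff for every index $i$ all descent positions of $i$ share a common value $v_i$ and every $j$ with $i<j<k$ for some descent position $k$ of $i$, which is not itself a descent position of $i$, satisfies $e_j > e_i$. The forward direction of each characterization is straightforward; the reverse direction for $\I_n(\geq,\neq,>)$ uses the triple $(i,j,k_t)$ to rule out $e_j = e_i$ for non-descent $j$, and triples $(i,k,k')$ to force common value of all descents.

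Using these characterizations I would define $\psi$ algorithmically. Sweeping $i=1,2,\ldots,n$ in order, whenever the current sequence has a unique descent position $k$ of $i$, replace every entry $e_j$ with $i<j<k$ and $e_j = e_i$ by $e_k$. The candidate inverse $\psi^{-1}$ sweeps $i=n,n-1,\ldots,1$: whenever the current sequence has descent positions $k_1 < \cdots < k_t$ of $i$ with $t \geq 2$, promote $f_{k_1},\ldots,f_{k_{t-1}}$ to $f_i$. The inversion-sequence bound survives each rewrite because the replacement value satisfies $e_k < e_i \leq i-1 < j$. The well-definedness of $\psi$ in the presence of overlapping descent ranges rests on a rigidity property of $\I_n(>,-,>)$: if $e_i = e_{i'}$ with $i<i'$ and both indices have descent positions, those descent positions must coincide, since otherwise one would produce a second descent position of the other. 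So any target entry $e_j$ is unambiguously assigned a single value. By construction, the stage of $\psi$ at index $i$ is exactly undone by the stage of $\psi^{-1}$ at $i$ in the reverse sweep, so the two maps are mutually inverse at the level of rewrites.

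The main obstacle is verifying that $\psi(e) \in \I_n(\geq,\neq,>)$, and symmetrically that $\psi^{-1}(f) \in \I_n(>,-,>)$. A purported $(\geq,\neq,>)$-occurrence $(i,j,k)$ in $\psi(e)$ must be traced back through the rewriting by a case analysis on which of the three entries $e_i, e_j, e_k$ were modified; together with the rigidity property above, this should force the original $e$ to contain a $(>,-,>)$-occurrence, yielding a contradiction. In every case the argument reduces to the fact that each value introduced by a rewrite equals some $e_k$ with $k$ a descent position of some $i'$ in $e$ (hence strictly smaller than $e_{i'}$) and that distinct indices owning the same rewritten entry must share the same descent position; this pins the value structure down tightly enough to force the required contradiction. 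The analogous verification for $\psi^{-1}$ proceeds dually, exploiting the fact that, in $\I_n(\geq,\neq,>)$, all descent positions of a given index share a common value, so promoting all but the last to $f_i$ eliminates all but one of them without introducing new mismatches.
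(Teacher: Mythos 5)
Your overall strategy---exhibiting an explicit bijection between the two avoidance classes---is viable, and the map you describe appears, where it is pinned down, to coincide with the restriction of the bijection $\beta=\alpha^{-1}$ that the paper constructs between $\I_n(100,210)$ and $\I_n(110,210)$ for classes 2958B and 2958C. But as written your argument has a genuine gap: the two claims that carry all the content of the theorem, namely that $\psi(e)\in\I_n(\geq,\neq,>)$ and that $\psi^{-1}(f)\in\I_n(>,-,>)$, are only asserted (``should force the original $e$ to contain a $(>,-,>)$-occurrence,'' ``proceeds dually''). That deferred case analysis \emph{is} the proof, and it is not routine. A second, related gap is the well-definedness of the sweep itself: a rewrite performed at stage $i'$ lowers an entry $e_j$ to $e_{k'}$, and this can turn $j$ into a \emph{new} descent position of an intermediate index $i''$ with $i'<i''<j$, destroying the uniqueness hypothesis under which stage $i''$ is supposed to act. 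Ruling this out needs an argument you do not supply: if $e_{i'}>e_{i''}$ then $(i',i'',k')$ is already a forbidden triple in $e$, and if $e_{i'}=e_{i''}$ your rigidity lemma applies and stage $i'$ has already done stage $i''$'s work. Your rigidity observation covers only the equal-value case. Likewise, ``by construction, the stage of $\psi$ at index $i$ is exactly undone by the stage of $\psi^{-1}$ at $i$'' cannot be taken for granted stage-by-stage, because the two sweeps run in opposite orders over different intermediate sequences.

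The paper sidesteps all of this by observing that $\I_n(>,-,>)=\I_n(100,210,201)\subseteq\I_n(100,210)$ and $\I_n(\geq,\neq,>)=\I_n(110,210,201)\subseteq\I_n(110,210)$, and that the latter two sets are already in bijection via the map $\alpha$ of Theorem~\ref{thm:2958BC}; it then remains only to check that $\alpha$ and its inverse preserve $201$-avoidance, a short argument using the properties \eqref{alphaprop1} and \eqref{alphaprop2} of $\alpha$. If you want to salvage your approach, the cleanest fix is to prove that your $\psi$ equals that restriction of $\beta$ (the value you substitute, $e_k$ for the unique descent position $k$, is exactly $\min\{e_j,\ldots,e_n\}$ for a repeated entry $e_j$) and then inherit injectivity and image membership from the superclass bijection rather than re-deriving them from scratch.
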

The proof relies on Theorem \ref{thm:2958BC} in the next section and will be given there.

\subsection{Classes 2958(A,B,C,D): Plane permutations and the semi-Baxter numbers}
\label{section:2958}

In an earlier version of this paper \cite{PAISII}, we conjectured that all four pattern equivalence classes 2958(A,B,C,D) have avoidance sets equinumerous with the \emph{plane permutations}, $\Sn_n(21\bar{3}54)$.  These are permutations in which every occurrence of the pattern 2154 is contained in an occurrence of 21354. We showed that the classes 2958(B,C,D) are Wilf-equivalent (Theorems \ref{thm:2958BC} and \ref{2958BD} below); additionally, we provided a characterization of the inversion sequences avoiding 2958B (Observation \ref{characterize2958B} below) and proved a recurrence for the avoidance sequence of 2958B.  

We can now make use of a recent paper of Bouvel, Guerrini, Rechnitzer, and Rinaldi \cite{BouvelGuerriniEtAl} to confirm our conjecture.

Bouvel, et al.\ coined the term ``semi-Baxter'' for the sequence of integers that enumerates the class of permutations avoiding the vincular pattern $2 \underline{41} 3$ (an occurrence of $2 \underline{41} 3$ is an occurrence of $2413$ where the ``4'' and ``1'' occur consecutively) \cite{BouvelGuerriniEtAl}. They derived a functional equation and used it to get a closed-form formula for the semi-Baxter sequence.

Moreover, Bouvel, et al.\  showed that semi-Baxter permutations are equinumerous with plane permutations:  they  used generating trees and proved that both classes of permutations are generated by the same succession rule $\Omega_{semi}$. Therefore, their formula for semi-Baxter numbers also enumerates plane permutations, which fulfills a challenge posed by Bousquet-M{\'e}lou and Butler \cite{BMB}.

Then, using our characterization of the inversion sequences avoiding 2958B,
Bouvel, et al.\  showed that those inversion sequences are also generated by the succession rule $\Omega_{semi}$ and are therefore equinumerous with plane and semi-Baxter permutations. This result, together with our proof below of the Wilf-equivalence of 2958(B,C,D), establishes that all three patterns give rise to avoidances sets that are equinumerous with plane permutations.

To complete the picture and confirm our original conjecture, we will show that inversion sequences avoiding the pattern 2958A also grow according to the succession rule $\Omega_{semi}$.

\subsubsection{Class {\bf 2958A}: Avoiding $e_j < e_k$ and $ e_i\geq e_k$}

Bouvel, et al.\ utilize generating trees to relate distinct combinatorial objects to the semi-Baxter sequence, providing a \emph{succession rule} for the sequence. To use this succession rule, each combinatorial object is identified with a label, $(h,k)$, which is based on the properties of the object. The rule then describes the labels of those members of the same class of objects that can be obtained by systematically adding  one atom.  For our purposes, adding one atom will correspond to adding a new last entry to an inversion sequence. 

The semi-Baxter sequence follows the succession rule:

\[
\Omega_{semi} = \begin{cases} (1,1)& \\ (h,k)\rightsquigarrow&(1,k+1),\ldots,(h,k+1) \\ &(h+k,1),\ldots,(h+1,k).   \end{cases}
\]

The top line of the succession rule gives the label for the ``root'' object (which will be the inversion sequence (0) in our case) and the second line shows the labels of the inversion sequences obtained by systematically adding a new last entry.

We show how to define the labels $(h,k)$ for inversion sequences avoiding $(-,<,\geq)$ in such a way that those sequences grow according to the succession rule $\Omega_{semi}$.

\begin{theorem}
$\I_n(-,<,\geq)=\I_n(101,201)$ grows according to the succession rule $\Omega_{semi}$ and is therefore enumerated by the semi-Baxter sequence.
\end{theorem}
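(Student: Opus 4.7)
My plan is to define, for each $e \in \I_n(-,<,\geq)$, a label $(h(e),k(e))$ and verify that (i) the root $(0) \in \I_1$ has label $(1,1)$ and (ii) the children $e \cdot v$ that remain in the class carry precisely the multiset of labels produced by $\Omega_{semi}$ from $(h(e),k(e))$. The starting point is a one-line characterization of valid extensions: unpacking the forbidden pattern, $e \cdot v \in \I_{n+1}(-,<,\geq)$ exactly when there exists a split index $s \in \{0,1,\ldots,n\}$ with $e_1,\ldots,e_s < v$ and $e_{s+1},\ldots,e_n \geq v$. In particular every $v > \maxx(e)$ is automatically valid via $s=n$. Guided by this, I set
\[
k(e) := n - \maxx(e), \qquad h(e) := |\{v \in \{0,\ldots,\maxx(e)\} : e\cdot v \in \I_{n+1}(-,<,\geq)\}|,
\]
so that $e$ has exactly $h(e)+k(e)$ children. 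The root has $\maxx=0$ and the single candidate $v=0$ is valid (take $s=0$), giving label $(1,1)$ as required.

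Now fix $e$ with label $(h,k)$ and set $M=\maxx(e)$. For the $k$ ``high'' children, namely $v = M+j$ with $j \in \{1,\ldots,k\}$, clearly $\maxx(e\cdot v)=v$ and hence $k(e\cdot v)=k+1-j$. To compute $h(e\cdot v)$, I partition the candidate extensions $v' \leq v$ of $e\cdot v$ into two ranges. If $v' \leq M$, the appended entry $v$ lies in the ``$\geq v'$'' suffix for every choice of split, so the valid splits for $e\cdot v$ coincide with the valid splits for $e$; this contributes the $h$ pre-existing low/middle extensions. If $M < v' \leq v$, then $v'$ is valid for $e\cdot v$ by taking $s=n$ (the prefix is all of $e$, with maximum $M < v'$, and the singleton suffix $\{v\}$ satisfies $v \geq v'$); this adds $j$ new extensions. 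Hence $h(e\cdot v)=h+j$, and as $j$ runs through $1,\ldots,k$ the high children realize the labels $(h+1,k),(h+2,k-1),\ldots,(h+k,1)$, which is the second row of $\Omega_{semi}$.

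For the $h$ ``low/middle'' children, enumerate the valid extensions $\leq M$ as $v_1 < v_2 < \cdots < v_h$ and fix $v=v_i$. Since $\maxx(e\cdot v_i)=M$, we have $k(e\cdot v_i)=k+1$. The crux is the claim: a value $v' \leq M$ is a valid extension of $e \cdot v_i$ if and only if $v' \leq v_i$ and $v'$ was a valid extension of $e$. The ``if'' direction is immediate from the characterization. For the converse, if $v' > v_i$ then the appended entry $v_i$ is $< v'$, so position $n+1$ must lie in the ``$< v'$'' prefix of $e\cdot v_i$; this forces every entry of $e$ into that prefix as well, contradicting $\maxx(e)=M\geq v'$. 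Therefore $h(e \cdot v_i)=|\{v_j : v_j \leq v_i\}|=i$, and the low/middle children realize the labels $(1,k+1),(2,k+1),\ldots,(h,k+1)$, the first row of $\Omega_{semi}$. Combining the two cases shows that the labeled generating tree of $\I_n(-,<,\geq)$ is isomorphic to the generating tree of $\Omega_{semi}$, and the theorem follows. I expect the main obstacle to be the ``only if'' direction in the low/middle case: one must rule out the possibility that inserting a single small new entry rescues an extension $v'$ that failed for $e$, and this relies in an essential way on the bound $v' \leq M$ together with the structural lemma from the first paragraph.
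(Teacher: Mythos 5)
Your proof is correct and follows essentially the same route as the paper: you use the identical label $(h,k)$ counting active sites at most $\maxx(e)$ and above $\maxx(e)$, and the same case split on whether the appended entry exceeds $\maxx(e)$. The only difference is one of rigor — your split-index characterization of valid extensions makes explicit the verification that the paper's proof asserts more tersely (e.g., that an active site of $e'$ below the new entry is active for $e'$ iff it was active for $e$), which is a welcome tightening rather than a new idea.
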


\begin{proof}
For each  $e \in \I_n(-,<,\geq)$, we will account for  all  $e' \in \I_{n+1}(-,<,\geq)$ obtainable by appending a new last entry $e_{n+1}$. Define an \emph{active site} for $e$ to be a value $c$ such that $(e_1,e_2,\ldots,e_n,c) \in \I_{n+1}(-,<,\geq)$. Now, for any inversion sequence $e$, let $h$ denote the number of active sites less than or equal to $\maxx(e)$, and let $k$ denote the number of active sites greater than $\maxx(e)$. 
 First notice that for $e=(0) \in \I_1(-,<,\geq)$, since $\maxx(e)=0$, the set of active sites is $\{0,1\}$, with $0 \leq \maxx(e)$ and $1 > \maxx(e)$, so $(h,k) = (1,1)$. 

For $n\geq 2$, let $e \in \I_n(-,<,\geq)$.  Let $e_{n+1}$ be the value of one of the active sites  for $e$ and consider $e'=(e_1,\ldots,e_{n+1}) \in \I_{n+1}(-,<,\geq)$.

If $e_{n+1} \leq \maxx(e)$, then since $e'$ must only avoid 101 and 201, an active site for
$e'$ must either be  less than or equal to $e_{n+1}$ or greater than $\maxx(e)=\maxx(e')$. So, the labels of the inversion sequences we obtain by appending such  an active site  $e_{n+1}$ to $e$  are $(1,k+1), (2,k+1), \ldots, (h, k+1)$.

If $e_{n+1} > \maxx(e)$, again since $e'$ must only avoid 101 and 201, all sites active for $e$ are also active for $e'$. Additionally, $e'$  will have a new active site: $n+1$. Since $\maxx(e') > \maxx(e)$,  some active sites for $e$ that were counted with $k$ will now be counted with $h$ when finding the label for $e'$.  The labels of the inversion sequences we obtain by appending 
such  an active site  $e_{n+1}$ to $e$ 
 are $(h+k,1),(h+k-1,2),\ldots,(h+1,k)$.

It follows that $\I_n(-,<,\geq)$ grows by the succession rule $\Omega_{semi}$ and is therefore enumerated by the semi-Baxter sequence.
\end{proof}

\subsubsection{Class {\bf 2958B}: Avoiding $e_i > e_j \geq e_k$}

Note that $\I_n(>,\geq,-) = \I_n(210,100)$.  It was shown by Corteel, et al.\ \cite{paisI} that the inversion sequences avoiding 210 (class 4306A) have the following useful characterization.  

Define a {\em weak left-to-right maximum} in an inversion sequence $e$ to be a position $j$ such that $e_i \leq e_j$ for all $i \in [j-1]$.

For $e \in \I_n$, let $a_1 \leq a_2 \leq \cdots \leq a_t$ be the sequence of weak left-to-right maxima of $e$.
Let $b_1 < b_2 < \cdots < b_{n-t}$ be the sequence of remaining indices in $[n]$.
Let $e^{top}=(e_{a_1}, e_{a_2}, \ldots, e_{a_t})$ and $\tp(e)=e_{a_t}$.
Let $e^{bottom} = (e_{b_1},e_{b_2}, \ldots, e_{b_{n-t}})$ and $\bottom(e)=e_{b_{n-t}}$.  If every entry of $e$ is a weak left-to-right maximum, then $e^{bottom}$ is empty and we set $\bottom(e)=-1$.

\begin{observation} [\cite{paisI}]  The inversion sequence $e$ avoids 210 if and only if 
$e^{top}$ and $e^{bottom}$ are weakly increasing sequences.
\end{observation}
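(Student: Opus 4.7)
The plan is to first observe that $e^{top}$ is automatically weakly increasing for every $e$, so the content of the observation reduces to the claim that $e$ avoids $210$ if and only if $e^{bottom}$ is weakly increasing. Indeed, if $a_r < a_s$ are two weak left-to-right maxima, then by definition of weak left-to-right maximum we have $e_{a_s} \geq e_\ell$ for every $\ell < a_s$; in particular $e_{a_s} \geq e_{a_r}$. Thus establishing that $e^{top}$ is weakly increasing requires no hypothesis on $e$.

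For the forward direction, I would argue by contrapositive. Suppose $e^{bottom}$ fails to be weakly increasing, so there exist bottom indices $b_r < b_s$ with $e_{b_r} > e_{b_s}$. Since $b_r$ is a bottom index, it is \emph{not} a weak left-to-right maximum, so some earlier index $i < b_r$ satisfies $e_i > e_{b_r}$. Then $i < b_r < b_s$ and $e_i > e_{b_r} > e_{b_s}$ is a $210$ pattern in $e$. This is the cleanest way to use the definition of ``bottom'', and I expect no obstacle here.

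For the reverse direction, assume $e^{top}$ and $e^{bottom}$ are both weakly increasing and suppose toward contradiction that $i < j < k$ witness $e_i > e_j > e_k$. I would run a short case analysis on which of $i,j,k$ are top versus bottom indices. The key tool is: if $\ell$ is a top index, then $e_\ell \geq e_m$ for every $m < \ell$. Thus $j$ cannot be a top index, since $e_i > e_j$ with $i < j$ would contradict the weak-max property; and $k$ cannot be a top index either, since $e_j > e_k$ with $j < k$ would contradict it. Hence $j$ and $k$ are both bottom indices, and then the weakly increasing assumption on $e^{bottom}$ forces $e_j \leq e_k$, contradicting $e_j > e_k$.

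I do not expect a real obstacle: the monotonicity of $e^{top}$ is a definitional tautology, and the rest is a short argument using that a bottom index is exactly one which is strictly dominated from the left, together with the defining property of weak left-to-right maxima. The only point where care is needed is in setting up the two directions so as to avoid double-counting cases, which is handled cleanly by observing up front that any $210$ pattern must have its middle and right entries at bottom positions.
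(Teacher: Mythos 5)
Your argument is correct and complete: the observation that $e^{top}$ is weakly increasing for \emph{every} inversion sequence is exactly right, and the two directions for $e^{bottom}$ (contrapositive via the defining property of a bottom index, and the case analysis showing the middle and right entries of any $210$ must sit at bottom positions) are both sound. Note that the paper itself gives no proof here — the observation is cited from \cite{paisI} — so there is nothing to compare against; your write-up is a valid self-contained justification.
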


We can extend this to $\I_n(210,100)$.
\begin{observation}    The inversion sequence $e$ avoids both 210 and 100 if and only if 
$e^{top}$ is weakly increasing and and $e^{bottom}$ is strictly increasing.
\label{characterize2958B}
\end{observation}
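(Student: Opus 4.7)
My plan is to leverage the characterization of $\I_n(210)$ from \cite{paisI} already stated and then isolate the additional effect of also avoiding $100$. Recall that a word $100$ corresponds to $i<j<k$ with $e_i > e_j = e_k$. Since every sequence in $\I_n(210,100)$ already lies in $\I_n(210)$, both $e^{top}$ and $e^{bottom}$ are weakly increasing by the prior observation, so the only thing left to pin down is whether $e^{bottom}$ can have a repeat.

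For the forward direction, I assume $e \in \I_n(210,100)$ and want to rule out equalities among consecutive entries of $e^{bottom}$. Suppose to the contrary that $b_j<b_{j+1}$ are consecutive non-weak-left-to-right-maxima indices with $e_{b_j}=e_{b_{j+1}}$. Since $b_j$ is by definition not a weak left-to-right maximum, there exists $i<b_j$ with $e_i > e_{b_j}$. Then $i<b_j<b_{j+1}$ with $e_i > e_{b_j} = e_{b_{j+1}}$, producing a $100$ pattern, a contradiction. Hence $e^{bottom}$ must be strictly increasing.

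For the reverse direction, assume $e$ avoids $210$, $e^{top}$ is weakly increasing, and $e^{bottom}$ is strictly increasing, and suppose for contradiction that $e$ contains a $100$ pattern at positions $i<j<k$ with $e_i > e_j = e_k$. Since $e_i>e_j$, the index $j$ cannot be a weak left-to-right maximum, so $j=b_s$ for some $s$. I then split on whether $k$ is a weak left-to-right maximum. If $k$ is a weak left-to-right maximum, then by definition $e_i \leq e_k$ since $i<k$, contradicting $e_i > e_k$. If $k$ is not a weak left-to-right maximum, then $k=b_t$ for some $t>s$, but strict increase of $e^{bottom}$ forces $e_{b_s} < e_{b_t}$, contradicting $e_j = e_k$. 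Either way we reach a contradiction, so $e$ avoids $100$.

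The argument is essentially bookkeeping once the $\I_n(210)$ characterization is in hand; the only step that needs a moment of care is verifying that the index supplied by failure of ``$b_j$ is a weak left-to-right maximum'' is actually strictly less than $b_j$ and gives a \emph{strict} inequality, but this is immediate from the definition of weak left-to-right maximum ($e_i \leq e_j$ for all $i<j$). There is no obstacle requiring nontrivial combinatorics; the proof is about two short paragraphs once written out.
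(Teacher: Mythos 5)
Your proof is correct and takes exactly the route the paper intends: the paper states this as an Observation without proof, presenting it as a direct extension of the cited characterization of $\I_n(210)$, and your argument supplies precisely that routine verification (weak increase of both subsequences from 210-avoidance, plus the observation that a repeat in $e^{bottom}$ together with the witness for failure of the weak left-to-right maximum condition produces a 100, and conversely). The only cosmetic point is that in the reverse direction you list 210-avoidance among the hypotheses, whereas it is not part of the right-hand side of the equivalence; it follows immediately from the prior observation since strictly increasing implies weakly increasing, so nothing is lost.
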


Using Observation \ref{characterize2958B},
Bouvel, et al.\ showed the following (where $SB_n$ denotes the $n$th semi-Baxter number):

\begin{theorem}[\cite{BouvelGuerriniEtAl}. Theorem 20]
There are as many inversion sequences of size $n$ avoiding 210 and 100 as plane permutations of size $n$. In other words $|\I_n(210,100)| = SB_n$.
\end{theorem}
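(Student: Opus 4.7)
The plan is to mimic the approach used in the preceding section for class 2958A, showing that $\I_n(210, 100)$ grows according to the same succession rule $\Omega_{semi}$ that enumerates plane (and semi-Baxter) permutations. I will work directly from Observation~\ref{characterize2958B}: the inversion sequences avoiding both $210$ and $100$ are exactly those for which $e^{top}$ is weakly increasing and $e^{bottom}$ is strictly increasing. This characterization makes extension by a new last entry $e_{n+1}=v$ easy to analyze: if $v \geq \tp(e)$, then $v$ becomes a new weak left-to-right maximum and appends weakly to $e^{top}$; if $\bottom(e) < v < \tp(e)$, then $v$ appends strictly to $e^{bottom}$. Every other $v$ in $\{0,\ldots,n\}$ either repeats a value in $e^{bottom}$ or is $\leq \bottom(e)$ and hence breaks strict monotonicity. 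So the active sites (valid $v$) are exactly $\{\bottom(e)+1, \bottom(e)+2, \ldots, n\}$.

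I would then label $e \in \I_n(210,100)$ by $(h(e), k(e))$ where
\[
h(e) \; = \; \tp(e) - \bottom(e), \qquad k(e) \; = \; n - \tp(e),
\]
using the convention $\bottom(e)=-1$ when $e^{bottom}$ is empty. The root $e = (0) \in \I_1$ has $\tp(e)=0$ and $\bottom(e)=-1$, giving the initial label $(1,1)$ required by $\Omega_{semi}$. Because bottom entries are strictly less than the most recent preceding top entry (which is at most $\tp(e)$), one has $\bottom(e) < \tp(e)$, so $h(e) \geq 1$; also $k(e) \geq 0$ and $h(e) + k(e) = n - \bottom(e)$ is the total number of active sites.

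Next I would separately verify the two families of children described by the rule. For the ``bottom-like'' additions $v \in \{\bottom(e)+1, \ldots, \tp(e)\}$ (of which there are $h$), the value $\tp$ is unchanged while $\bottom$ updates to $v$ if $v < \tp(e)$ and remains the same if $v = \tp(e)$; in either case the new label is $(\tp(e) - v,\, (n+1) - \tp(e)) = (\tp(e) - v,\, k+1)$, and as $v$ ranges over $\{\tp(e), \tp(e)-1, \ldots, \bottom(e)+1\}$ this sweeps out labels $(h, k+1), (h-1, k+1), \ldots, (1, k+1)$. For the ``top-like'' additions $v \in \{\tp(e)+1, \ldots, n\}$ (of which there are $k$), $\tp$ becomes $v$ while $\bottom$ is unchanged, producing labels $(v - \bottom(e),\, (n+1) - v) = (h + (v-\tp(e)),\, k-(v-\tp(e))+0)$, and as $v$ increases this sweeps out $(h+1, k), (h+2, k-1), \ldots, (h+k, 1)$. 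Together these two families realize exactly the productions of $\Omega_{semi}$.

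The main obstacle, and what makes this labeling nontrivial, is that the ``semantically natural'' choice---letting $h$ count the active sites strictly below $\tp(e)$ and $k$ count those at or above $\tp(e)$---ends up off by one and does not satisfy the rule. The correct fix is to group the boundary case $v=\tp(e)$ (which technically extends $e^{top}$ but leaves the maximum unchanged) with the bottom-like additions. Once this subtlety is found, the verification of both productions is direct arithmetic from the definitions of $\tp$, $\bottom$, and the characterization in Observation~\ref{characterize2958B}, and the equality $|\I_n(210,100)| = SB_n$ follows from the fact that $\Omega_{semi}$ generates the semi-Baxter (equivalently, plane permutation) sequence.
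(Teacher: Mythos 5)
Your proposal is correct and follows exactly the route the paper takes for this result: the paper cites Bouvel et al., who prove it by showing (via Observation~\ref{characterize2958B}) that $\I_n(210,100)$ grows according to the succession rule $\Omega_{semi}$, which is also the argument the paper carries out in-house for class 2958A, and your labeling $h=\tp(e)-\bottom(e)$, $k=n-\tp(e)$ realizes both productions correctly. The only blemish is the harmless typo $k-(v-\tp(e))+0$, which should read $k-(v-\tp(e))+1$; the list of labels $(h+1,k),\ldots,(h+k,1)$ you then write down is right.
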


The characterization in Observation~\ref{characterize2958B} can be used to derive a recurrence.

\begin{theorem} 
Let $S_{n,a,b}$ be the number of $e \in \I_n(201,100)$ with $\tp(e) = a$ and  $\bottom (e)= b$.
Then
\begin{equation*}
S_{n,a,b} = \sum_{i=-1}^{b-1}S_{n-1,a,i} +  \sum_{j=b+1}^{a}S_{n-1,j,b},
\label{Snab:recurrence}
\end{equation*}
with initial conditions 
$S_{n,a,b}=0$ if $a \geq n$ and $S_{n,a,-1}= \frac{n-a}{n}{n-1+a \choose a}$.
\label{rec:210and100}
\end{theorem}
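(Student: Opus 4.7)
The plan is to interpret the recurrence combinatorially by building each $e \in \I_n(210,100)$ with $\tp(e)=a$ and $\bottom(e)=b$ from its prefix $e'=(e_1,\dots,e_{n-1})$ and the last entry $e_n$. Since pattern avoidance is preserved under taking a prefix, $e' \in \I_{n-1}(210,100)$. The two terms on the right-hand side will correspond to the two cases of whether $e_n$ is a new weak left-to-right maximum of $e$, and the characterization from Observation~\ref{characterize2958B} ($e^{top}$ weakly increasing, $e^{bottom}$ strictly increasing) will govern when a particular appended value is admissible.

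In Case~A ($e_n$ is not a weak left-to-right maximum), $e_n<\tp(e')$, so the set of weak left-to-right maxima of $e$ equals that of $e'$; hence $\tp(e)=\tp(e')=a$ and $\bottom(e)=e_n=b$. The requirement that $e^{bottom}$ be strictly increasing forces $b>\bottom(e')$, so $e'$ ranges exactly over sequences with $\tp(e')=a$ and $\bottom(e')=i$ for some $i\in\{-1,0,\dots,b-1\}$; this yields the first sum $\sum_{i=-1}^{b-1} S_{n-1,a,i}$. I would also verify the reverse direction: given any such $e'$, appending $b$ extends $e'^{bottom}$ by $b$ (still strictly increasing since $b>i$) and leaves $e'^{top}$ unchanged, so Observation~\ref{characterize2958B} certifies $e \in \I_n(210,100)$.

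In Case~B ($e_n$ is a weak left-to-right maximum), $e_n \geq \tp(e')$, the non-weak-maxima of $e$ match those of $e'$, so $\bottom(e)=\bottom(e')=b$ and $\tp(e)=e_n=a$, forcing $\tp(e')\leq a$. The crucial point is the lower bound $\tp(e')\geq b+1$: when $b\geq 0$, some position $p<n$ in $e'$ is a non-weak-max with value $b$, so there is an earlier weak left-to-right maximum with value strictly greater than $b$, forcing $\tp(e')>b$; when $b=-1$ the bound $\tp(e')\geq 0=b+1$ is automatic. Thus $\tp(e')\in\{b+1,\dots,a\}$, producing the second sum $\sum_{j=b+1}^{a} S_{n-1,j,b}$. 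For the reverse direction, appending $a \geq j$ to such an $e'$ extends $e'^{top}$ by $a$ (still weakly increasing) and leaves $e'^{bottom}$ unchanged.

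For the initial conditions, $S_{n,a,b}=0$ when $a\geq n$ is immediate since every entry of an inversion sequence of length $n$ is at most $n-1$. For $S_{n,a,-1}$, the condition $\bottom(e)=-1$ means $e$ has no non-weak-maxima, i.e., $e$ is weakly increasing with $e_n=a$; counting such sequences with $e_i\leq i-1$ is a standard ballot-type problem, and the count $\tfrac{n-a}{n}\binom{n-1+a}{a}$ follows from a cycle lemma or reflection argument applied to lattice paths from $(0,0)$ to $(n-1-a,a)$ staying weakly below $y=x$. I expect the main obstacle to be the lower-bound argument in Case~B---making precise why the presence of any bottom element forces $\tp(e')$ to strictly exceed $\bottom(e')$---since the rest of the case analysis is bookkeeping once the characterization in Observation~\ref{characterize2958B} is in hand.
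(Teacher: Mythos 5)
Your proof is correct and is precisely the argument the paper has in mind: the paper states this recurrence without proof, remarking only that the characterization in Observation~\ref{characterize2958B} can be used to derive it, and your case split on whether $e_n$ is a weak left-to-right maximum (with the strict-increase condition on $e^{bottom}$ giving the first sum, the bound $\bottom(e')<\tp(e')\leq a$ giving the second, and the ballot-number count handling $b=-1$) supplies exactly the omitted derivation. One cosmetic note: the ``$\I_n(201,100)$'' in the theorem statement is a typo for $\I_n(210,100)$, which is the set you correctly work with.
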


From Theorem \ref{rec:210and100} we get:
\begin{equation}
|\I_n(210,100)|\ = \ \sum_{a=0}^{n-1} \sum_{b=-1}^{a-1} S_{n,a,b} \ =\  \frac{1}{n+1}\binom{2n}{n}  +\sum_{a=0}^{n-1} \sum_{b=0}^{a-1} S_{n,a,b}.
\label{count210}
\end{equation}

\subsubsection{Class {\bf 2958C}: Avoiding $e_i \geq e_j > e_k$}

Now we will show that the class 2958B is Wilf-equivalent to 2958C.

\begin{theorem}
The patterns $(-,<,\geq)$ and $(\geq,>,-)$, defining classes 2958B and 2958C respectively, are Wilf-equivalent.
\label{thm:2958BC}
\end{theorem}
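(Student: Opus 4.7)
My plan is to adapt the argument used in Class 2958A: show that $\I_n(\geq,>,-)$ grows according to the succession rule $\Omega_{semi}$. Since Class 2958B is also enumerated by $\Omega_{semi}$ (by the cited theorem of Bouvel et al.) with the same root label $(1,1)$, the two classes will be seen to have identical counting sequences, settling the Wilf-equivalence.

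The first step is to describe the active sites of an $e \in \I_n(\geq,>,-)$, namely those $c \in \{0,1,\ldots,n\}$ for which $(e_1,\ldots,e_n,c)$ remains in the avoidance class. Any new occurrence of $(\geq,>,-)$ must end at position $n+1$, so $c$ fails to be active iff there exists a position $j \le n$ with $e_j > c$ that is not a strict left-to-right maximum of $e$ (i.e., some $i<j$ has $e_i \geq e_j$). Letting
\[
M \ =\ \max\{e_j \,:\, j \text{ is not a strict left-to-right maximum of } e\}
\]
(or $M=-\infty$ if every position of $e$ is a strict maximum), the active sites form the consecutive range $\{\max(M,0),\max(M,0)+1,\ldots,n\}$.

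Following the 2958A proof, I would label $e$ by $(h,k)$, where $h$ counts active sites $\le \maxx(e)$ and $k$ counts active sites $> \maxx(e)$. The root $(0) \in \I_1(\geq,>,-)$ has $\maxx=0$ and active sites $\{0,1\}$, giving label $(1,1)$. A two-case verification of $\Omega_{semi}$ then completes the argument: if the appended entry $c$ satisfies $c \le \maxx(e)$, then position $n+1$ is not a strict maximum of the extension, $\maxx$ is preserved, and the new value of $M$ becomes $c$; letting $c$ range over the $h$ small active sites $\{\max(M,0),\ldots,\maxx(e)\}$ yields the labels $(h,k+1),(h-1,k+1),\ldots,(1,k+1)$. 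If instead $c > \maxx(e)$, then position $n+1$ is a new strict maximum, $M$ is unchanged, $\maxx(e')=c$, and the $k$ large active sites produce labels $(h+1,k),(h+2,k-1),\ldots,(h+k,1)$. Both lists agree with $\Omega_{semi}$ exactly.

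The main obstacle is careful bookkeeping: tracking how the minimum active site, $\maxx$, and the set of non-strict-left-to-right-maxima change after one append, and confirming the indexing of $h,k$ works out in both sub-cases. Because the relevant notion here is the \emph{strict} left-to-right maximum (rather than the weak one used in 2958B), the structural details differ from the 2958A argument even though the shape of the calculation is the same.
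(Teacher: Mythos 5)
Your proposal is correct, and the succession-rule computation checks out: for $e\in\I_n(110,210)$ the active sites are exactly $\{\max(M,0),\ldots,n\}$ with $M$ the largest entry sitting at a non-strict left-to-right maximum, appending $c\le\maxx(e)$ resets $M$ to $c$ and produces the labels $(1,k+1),\ldots,(h,k+1)$, and appending $c>\maxx(e)$ leaves $M$ fixed and produces $(h+k,1),\ldots,(h+1,k)$, so class 2958C does grow by $\Omega_{semi}$. However, this is a genuinely different route from the paper's. The paper proves Theorem~\ref{thm:2958BC} by an explicit bijection $\alpha:\I_n(110,210)\to\I_n(100,210)$ that replaces each entry $e_j$ repeated at some later position by $\max\{e_1,\ldots,e_j\}$, verifies directly that the image avoids $100$ and $210$, and inverts $\alpha$ by sending repeated values back down to $\min\{e_j,\ldots,e_n\}$. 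Your generating-tree argument buys a direct proof that $|\I_n(110,210)|$ equals the semi-Baxter number, but it is not self-contained: it leans on the external result of Bouvel et al.\ that class 2958B follows $\Omega_{semi}$ (or, equivalently, on the paper's parallel $\Omega_{semi}$ argument for 2958A together with Theorem~\ref{2958BD}). The paper's bijection, by contrast, needs no outside input and—more importantly—is reused heavily downstream: the proofs for classes 2549A/C, 1953A/B, 1833A/B, 746A/B, and 663A/B all proceed by showing that this same $\alpha$ preserves avoidance of additional patterns ($201$, $120$, $010$), something a pure counting argument via succession rules cannot deliver. So your approach proves the stated Wilf-equivalence, but it would not substitute for the paper's proof in the larger architecture of Section~\ref{section:2958} and Section~\ref{equivalences}.
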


\begin{proof}
The avoidance sets for classes 2958B and 2958C are  $\I_n(100,210)$ and $\I_n(110,210)$, respectively.
We describe a bijection
\[
\alpha: \I_n(110,210) \rightarrow \I_n(100,210).
\]
For $e \in \I_n(110,210)$, let $\alpha(e)=e' = (e_1', \ldots, e_n')$ where, for $1 \leq j \leq n$,
\begin{equation}
e_j' = \left \{
\begin{array}{ll}
\max \{e_1, \ldots, e_j\}, & {\rm if} \  \ e_j=e_k \ {\rm for \  some} \ k > j; \\
e_j, & {\rm otherwise.}
\end{array}
\right .
\label{alpha}
\end{equation}
Note that for $1 \leq j \leq n$, 
\begin{equation}
e'_j \ \leq \ \max\{e_1, \ldots, e_j\} \ = \ \max \{e'_1, \ldots, e'_j\}
\label{alphaprop1}
\end{equation}
and
if $e'_j \neq e_j$ then 
\begin{equation}
 e_j'  \ = \  \max\{e_1, \ldots, e_j\} \ \geq e'_i \ {\rm for} \ 1 \leq i < j.
\label{alphaprop2}
\end{equation}
To see that $e'$ avoids 100, suppose $e_i' > e_j' = e_k'$ for some $i < j < k$. Then by \eqref{alphaprop2}, we must have $e_j'=e_j$ and $e_k'=e_k$.  But from the definition of $\alpha$
\eqref{alpha}, if $e_j=e_k$ where $j<k$ then $e_j' = \max\{e_1, \ldots, e_j\} \geq e'_i$, which is a contradiction.

To see that $e'$ avoids 210, suppose that $e_i' > e_j' > e_k'$ for some $i < j < k$.  Again by 
 \eqref{alphaprop2},
$e_j'=e_j$ and $e_k'=e_k$.  Since $e$ avoids 210, $e_i' \neq e_i$ so $e_i'= \max\{e_1, \ldots, e_i\} = e_s$ for some $s \in [i]$.  But then $e_s > e_j > e_k$ is a 210 in $e$.

Thus $e' = \alpha(e) \in \I_n(100,210)$.  To show that $\alpha$ is a bijection, we define its inverse $\beta$.

First we make an observation. 
Consider some $e \in \I_n(110, 210)$ and an entry $e_j$ such that $e'_j \neq e_j$ in $\alpha(e) = e'$.  This implies that there exists some index $k$ with $j<k$ and $e_j=e_k=m$ for some value $m$.  Additionally, it must be the case that $m < M = \max\{e_1, \ldots, e_{j-1}\} = e_i$ where $i \in [j-1]$ (else, $e'_j=m$).
Then, since $e$ avoids 210, 
we must have $m= \min\{e_j, \ldots, e_n\}$.  
Thus, for $e'= \alpha(e)$ we have
\[e_i'=e_i=M=e_j'>e_j=m=\min \{e_j , \ldots, e_n\}=\min \{e'_j , \ldots, e'_n\}.
\]
So, we can reconstruct $e$ from $e'$ by defining $\beta: \I_n(100,210) \rightarrow \I_n(110,210)$
 as follows.

For $e \in \I_n(100,210)$, let $\beta(e)=e' = (e_1', \ldots, e_n')$ where for $1 \leq j \leq n$
\[
e_j' = \left \{
\begin{array}{ll}
\min \{e_j, \ldots, e_n\}, & {\rm if} \  \ e_i=e_j \ {\rm for \  some} \ i< j; \\
e_j, & {\rm otherwise.}
\end{array}
\right .
\]
Then $\beta(\alpha(e))=e.$  We can check similarly that $\beta(e) \in \I_n(110,210)$ and $\alpha(\beta(e))=e$.
\end{proof}

We now return to the proof of the Wilf-equivalence of the classes 2549A and 2549C from the previous section.

\begin{proof}[Proof of Theorem~\ref{thm:2549AC}]

Observe that  2549C $\preceq$ 2958C and 2549A $\preceq$ 2958B in the following sense:
\[
\rm{2549C} : \I_n(\geq, \neq,>) = \I_n(110,210,201) \subseteq \I_n(110,210) = \I_n(\geq, >,-) : \rm{2958C}
\]
\[
\rm{2549A} : \I_n(> ,-,>) = \I_n(100,210,201) \subseteq \I_n(100,210) =\I_n(>, \geq, -) : \rm{2958B}
\]
We check that the mapping $\alpha$  \eqref{alpha} restricts to a bijection between inversion sequences in class 2549C and in class 2549A. 

Let $e \in \I_n(110,210)$ and let $e'=\alpha(e)$.  Suppose $e$ avoids the  pattern 201, but that for some $i < j < k$, $e'_i > e'_j < e'_k$ and $e'_i > e'_k$.
Then by \eqref{alphaprop2}, $e_j'=e_j$ and $e_k'=e_k$.  But, since $e$ avoids 201, $e'_i \neq e_i$.
Then, by definition of $\alpha$, there is an $s \in [i-1]$ such that $e'_i=e_s$.  But then $e_se_je_k$ forms a 201 pattern in $e$, a contradiction.

For the converse, let $e \in \I_n(100,210)$ and let $e'= \beta(e)$.  Suppose there is $i<j<k$ such that $e_i'e_j'e_k'$ form a 201.  We show $e$ must also contain 201.
By definition of $\beta$, since $e_i'>e_j'$, $e_i=e_i'$.  Also, there exists $s:  j \leq s \leq n$ such that $e_j'=e_s$ and $t:  k \leq t \leq n$ such that $e_k'=e_t$.  Since $e$ avoids 210,  it must be that $s < t$.  But then $e_ie_se_t$ is a 201 in $e$.

\end{proof}

\subsubsection{{\bf 2958D}: Avoiding $e_j \leq e_k$ and $e_i > e_k$}

\begin{theorem}
The patterns $(>,\geq,-)$ and $(-,\leq,>)$, defining classes {\rm 2958B} and {\rm 2958D} respectively, are Wilf-equivalent.
\label{2958BD}
\end{theorem}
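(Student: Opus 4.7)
My plan is to prove Theorem~\ref{2958BD} by showing that $\I_n(-,\leq,>) = \I_n(201,100)$ grows according to the succession rule $\Omega_{semi}$, mirroring the argument just given for class 2958A. Combined with Theorem~20 of~\cite{BouvelGuerriniEtAl}, which identifies $|\I_n(210,100)|$ with the semi-Baxter number $SB_n$, this will establish the Wilf-equivalence.

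The crucial step is to characterize the active sites. For $e \in \I_n(201,100)$, I will call $c \in \{0,1,\ldots,n\}$ an active site if $e \cdot c \in \I_{n+1}(201,100)$. Since both a 201 and a 100 pattern at the new last position require some $i < j \leq n$ with $e_i > c$ and $e_j \leq c$, the value $c$ is active precisely when no such index pair exists; equivalently, when there exists a \emph{split index} $p \in \{0,1,\ldots,n\}$ with $\max\{e_1,\ldots,e_p\} \leq c$ and $\min\{e_{p+1},\ldots,e_n\} > c$ (using the conventions $\max \emptyset = 0$ and $\min \emptyset = +\infty$). In particular, every integer $d \geq \maxx(e)$ is active via the choice $p = n$.

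I then assign to each $e$ the label $(h,k)$, where $h$ counts active sites $\leq \maxx(e)$ and $k$ counts active sites $> \maxx(e)$, so that $k = n - \maxx(e)$ by the remark above. The root $(0)$ has active sites $\{0,1\}$ and hence label $(1,1)$. For $e$ with label $(h,k)$, appending an active $c \leq \maxx(e)$ preserves $\maxx$ and causes the new active-site set to equal $\{d < c : d \text{ is active for } e\} \cup [\maxx(e), n+1]$; as $c$ traverses the $h$ active sites $\leq \maxx(e)$ in increasing order, the resulting children have labels $(1,k+1), (2,k+1), \ldots, (h,k+1)$. Appending an active $c > \maxx(e)$, which is necessarily $c \in \{\maxx(e)+1, \ldots, n\}$, raises the max to $c$ and yields labels $(h+k,1), (h+k-1,2), \ldots, (h+1,k)$. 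These two lists together match the children prescribed by $\Omega_{semi}$ exactly, so $\I_n(201,100)$ is enumerated by the semi-Baxter sequence, completing the proof. The main obstacle is the bookkeeping in this last step: one must verify that the old and new contributions to the active-site set of a child are disjoint and exhaust all new active sites, which rests on the robustness of the block $[\maxx(e), n+1]$ (every element of which is automatically an active site via $p = n$).
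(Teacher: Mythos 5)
Your proof is correct, but it takes a genuinely different route from the paper's. The paper proves Theorem~\ref{2958BD} bijectively: it takes the map of \cite{paisI} (Theorem 5) from $\I_n(210)$ to $\I_n(201)$ --- fix the weak left-to-right maxima and greedily reassign the remaining entries --- and checks that it preserves $100$-avoidance, so it restricts to a bijection $\I_n(210,100)\rightarrow\I_n(201,100)$. You instead run a generating-tree argument directly on $\I_n(201,100)$, showing it grows by $\Omega_{semi}$ exactly as the paper does for class 2958A, and then conclude equinumeration with $\I_n(210,100)$ via the cited semi-Baxter enumeration of \cite{BouvelGuerriniEtAl}. I checked your key computations and they hold: a site $c$ is active iff the positions with entry exceeding $c$ form a suffix of $[n]$ (your split-index condition); for $c\leq\maxx(e)$ the sites $d$ with $c\leq d<\maxx(e)$ die (the new last entry $c\leq d$ together with an earlier occurrence of $\maxx(e)>d$ forms a forbidden pair), sites $d<c$ inherit their status, and $[\maxx(e),n+1]$ is wholly active, which yields the labels $(1,k+1),\ldots,(h,k+1)$; for $c>\maxx(e)$ all old sites below $c$ survive and the count $k'=n+1-c$ gives $(h+k,1),\ldots,(h+1,k)$. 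The trade-off: your argument is pleasantly parallel to the paper's treatment of 2958A and ties class D directly to the succession rule, but it proves only equinumeration and leans on the external result that $|\I_n(210,100)|=SB_n$; the paper's explicit bijection needs no enumeration input, and the same map (and its restrictions) is reused elsewhere to settle classes 772, 1953, 1833, 746, and 663, so it carries more structural information.
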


\begin{proof}
The inversion sequences in classes 2958B and 2958D are defined by $\I_n(210,100)$ and $\I_n(201,100)$, respectively.
It was shown in \cite{paisI}, Theorem 5, that the following gives a bijection from
$\I_n(210)$ to $\I_n(201)$.  It can be checked that this mapping preserves 100-avoidance and therefore restricts to a bijection
from   $\I_n(210,100)$ to $\I_n(201,100)$.

Given $e \in \I_n(210)$, define $f \in \I_n(201)$ as follows.
Let $e_{a_1} \leq e_{a_2} \leq \cdots \leq e_{a_t}$ be the sequence of  weak left-to-right maxima of $e$ and let
$e_{b_1},  e_{b_2},  \ldots, e_{b_{n-t}}$  be the subsequence of remaining elements of $e$.  Since 
$e$ avoids both 210 and 100, $e_{b_1} < e_{b_2} < \cdots < e_{b_{n-t}}$.

For $i= 1, \ldots, t$, set $f_{a_i}=e_{a_i}$.  For each $j=1,2, \ldots, n-t$, we extract an element of the multiset $B=\{e_{b_1}, e_{b_2},\ldots, e_{b_{n-t}}\}$ and assign it to $f_{b_1}, f_{b_2},\ldots, f_{b_{n-t}}$ as follows: 
\[
f_{b_j} = {\rm max}\{k \ | \ k \in B - \{f_{b_1}, f_{b_2},\ldots, f_{b_{j-1}}\} \ {\rm and} \
k < {\rm max}(e_1, \ldots, e_{{b_j}-1}) \}.
\] \end{proof}
This is the same mapping that was used in Section \ref{section:772} to show that the patterns 772A and 772B are Wilf-equivalent.  Note that
\[
{\rm 772B} : \I_n(210,110,100,000) \subseteq  \I_n(210,100) : {\rm 2958B};
\]
\[
{\rm 772A} : \I_n(201,101,100,000) \subseteq  \I_n(201,100) : {\rm 2958D}.
\]

By the Wilf-equivalencies between classes B, C, and D, we know that all of 2958(A,B,C,D) are enumerated by the semi-Baxter numbers.

\subsection{Classes 3207(A,B):  $\I_n(101)$, $\I_n(110)$ }
\label{section:3207}

It was shown by Corteel, et al.\ \cite{paisI} that both $\I_n(<,-,=)=\I_n(101)$ and $\I_n(=,>,-)=\I_n(110)$ are counted by the sequence A113227 in the OEIS \cite{Sloane}, where it is said to count  $\Sn_n($1\underline{23}4$)$.
$\Sn_n($1\underline{23}4$)$ is the set of permutations with no $i < j < k < \ell$ such that
$\pi_i < \pi_j < \pi_k < \pi_{\ell}$ and $k=j+1$.

 It was proven by David Callan in \cite{callan10}, that $\Sn_n($1\underline{23}4$)$ is in bijection with increasing ordered trees with $n+1$ vertices whose leaves, taken in preorder, are also increasing.  He showed that if $u_{n,k}$ is the number of such trees with $n+1$ vertices in which the root has $k$ children then
\begin{equation}
u_{n,k} = u_{n-1,k-1} + k \sum_{j=k}^{n-1} u_{n-1,j}
\label{callan}
\end{equation}
with initial conditions $u_{0,0}=1$ and $u_{n,k}=0$ if $k > n$, or $n>0$ and $k=0$.

It was shown by Corteel, et al.\ \cite{paisI} that the number of $e \in \I_n(101)$ with exactly $k$ zeros is $u_{n,k}$, as is
the number of $e \in \I_n(110)$ with exactly $k$ zeros.
As a consequence, $101$ and $110$ are Wilf-equivalent and both avoidance sets are counted by A113227.

\subsection{Class 3720:  Quadrant Marked Mesh Patterns}
\label{section:3720}

In this section we prove that $\I_n(>,\neq,>)$ is counted by the sequence A212198 in the OEIS \cite{Sloane} where it is said to count permutations avoiding a particular marked mesh pattern.

Kitaev and Remmel \cite{KitaevRemmel} introduced the idea of quadrant marked mesh patterns, a definition of which is given below.

\begin{definition}
Let $\pi=\pi_1\pi_2 \ldots \pi_n \in S_n$.  Consider the graph of $\pi$, $G(\pi)$, consisting of the points $(i,\pi_i)$ for all $i \in [n]$.  The entry $\pi_i$ is said to \emph{match} the quadrant marked mesh pattern $\MMP(a,b,c,d)$, if in $G(\pi)$ there are at least $a$ points to the northeast of $(i,\pi_i)$, at least $b$ points to the northwest of $(i,\pi_i)$, at least $c$ points to the southwest of $(i,\pi_i)$, and at least $d$ points to the southeast of $(i,\pi_i)$. The order in which we consider quadrants proceeds counterclockwise, beginning with the top right quadrant.
\end{definition}

Let $S_n(\MMP(a,b,c,d))$ denote the set of permutations of length $n$ where no $\pi_i$ matches $\MMP(a,b,c,d)$.  We will prove that $|\Sn_n(\MMP(0,2,0,2))| = |\I_n(>, \neq ,>)|$ for all $n$. 
By symmetry established by Kitaev and Remmel \cite{KitaevRemmel}, this implies that $|\I_n(>, \neq ,>)| = |\Sn_n(\MMP(2,\allowbreak0,2,0))|$.  In our proof, we make use of the bijection  $\phi$ from Section \ref{section:1806}, whose definition was given in Definition \ref{def phi}.  Specifically, we can prove the following:

\begin{theorem}
For all $n$, $\phi(\Sn_n(\MMP(0,2,0,2))) = \I_n(>, \neq ,>)$.
\end{theorem}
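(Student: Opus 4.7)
My plan is to mirror the case analysis of Theorem~\ref{thm:1806D} (which handled $\phi(\Sn_n(4321,4312))=\I_n(\geq,\neq,\geq)$), strengthening each of its ``at least one NW/SE point'' steps to ``at least two''. Up front I would establish a structural observation: the set of distinct values appearing in $\{e_{i+1},\ldots,e_n\}$ is exactly $\{\pi_m-1 : m>i,\ \pi_m\le m\}$. This follows by downward induction on $i$ from the two clauses of $\phi$, since when $\pi_{i+1}\le i+1$ the rule adds the fresh value $\pi_{i+1}-1$ to the distinct set, while when $\pi_{i+1}>i+1$ the new entry $e_{i+1}$ is inherited from the tail and contributes no new distinct value. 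The rest of the proof then splits on whether $\pi_i\le i$ (so $e_i=\pi_i-1$) or $\pi_i>i$ (so $e_i$ is the $t$th smallest distinct tail value, where $t$ is the rank of $\pi_i$ from the top in $\{\pi_1,\ldots,\pi_i\}$).

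For the forward direction, fix $\pi_i$ matching $\MMP(0,2,0,2)$, with two NW witnesses $a_1<a_2<i$ and two SE witnesses $i<b_1<b_2$. If $\pi_i\le i$, then $\pi_{b_1},\pi_{b_2}<\pi_i\le i<b_1<b_2$ forces $e_{b_1}=\pi_{b_1}-1$ and $e_{b_2}=\pi_{b_2}-1$, so $(i,b_1,b_2)$ displays $(>,\neq,>)$ in $e$ directly. If instead $\pi_i>i$, the two NW witnesses force $t\ge 3$, so at least two distinct values of $\{e_{i+1},\ldots,e_n\}$ lie strictly below $e_i$; picking positions $p<q$ at which two distinct such values appear yields the desired $(>,\neq,>)$ triple.

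For the converse, take $i<j<k$ with $e_i>e_j$, $e_j\neq e_k$, $e_i>e_k$; I claim $\pi_i$ itself matches $\MMP(0,2,0,2)$. If $\pi_i\le i$, the structural observation converts each of the two distinct tail values $v<\pi_i-1$ into a position $m>i$ with $\pi_m=v+1<\pi_i$, supplying two SE points of $\pi_i$; and since the $n-\pi_i\ge n-i$ values larger than $\pi_i$ can fill at most $n-i-2$ of the $n-i$ positions to the right of $i$, at least two larger values must sit strictly to the left of $i$, giving two NW points. If $\pi_i>i$, then $e_i$ being at least the third smallest distinct tail value forces $t\ge 3$, exhibiting two NW points at once; dually, of the $\pi_i-1\ge i$ values smaller than $\pi_i$, at most $i-t\le i-3$ can sit at positions weakly left of $i$, forcing at least three of them to the right and giving two (in fact three) SE points.

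The main obstacle is the strict-inequality bookkeeping in the converse sub-case $\pi_i\le i$: the witnesses $j,k$ need not themselves satisfy $\pi_j\le j$ or $\pi_k\le k$, so one cannot read off $\pi_j<\pi_i$ and $\pi_k<\pi_i$ directly from $e_j,e_k<e_i$. The structural observation is exactly what is needed to reroute each of the two tail values strictly below $e_i$ to a position $m>i$ with $\pi_m\le m$, where $\pi_m<\pi_i$ is automatic; once that step is in hand, the two counting arguments close out both sub-cases.
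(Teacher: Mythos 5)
Your proof is correct and follows the same two-case skeleton as the paper's argument (which is itself a strengthening of the proof of Theorem~\ref{thm:1806D} from ``at least one'' to ``at least two'' witnesses in each quadrant): the forward direction splits on $\pi_i\le i$ versus $\pi_i>i$, and the converse splits on whether $e_i$ recurs in the tail, with the same counting of values above/below $\pi_i$ on each side of position $i$. The one genuine difference is your structural observation that the distinct values of $\{e_{i+1},\ldots,e_n\}$ are exactly $\{\pi_m-1 : m>i,\ \pi_m\le m\}$. The paper does not state this; instead, in the converse sub-case $\pi_i\le i$ it asserts that $e_j\le e_i\le i<j$ forces $e_j=\pi_j-1$ (i.e., $\pi_j\le j$), which does not follow from the definition of $\phi$ alone, since a small $e_j$ can be inherited from deeper in the tail when $\pi_j>j$. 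Your rerouting of each small tail \emph{value} to a position $m>i$ with $\pi_m\le m$ and $\pi_m<\pi_i$ supplies exactly the two southeast witnesses needed and closes that step cleanly; the rest of your inequalities (in particular $t\ge 3$ in both directions and the pigeonhole counts $i-\pi_i+2\ge 2$ and $\pi_i-1-i+t\ge t\ge 3$) all check out. So this is essentially the paper's proof, executed with slightly more care at the one place where care is needed.
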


\begin{proof}
This proof is very similar to the proof of Theorem \ref{thm:1806D}.  Consider some $\pi \in \Sn_n$ such that there exists some $\pi_i$ that matches $\MMP(0,2,0,2)$. This implies that there exist indices $a<b<i<j<k$ such that $\min\{\pi_a,\pi_b\}>\pi_i>\max\{\pi_j,\pi_k\}$. We will show that there exists an occurrence of the pattern $(>, \neq, >)$ in $\phi(\pi) = e$.

We must consider two cases.  If $\pi_i\leq i$, then $j >\pi_i>\pi_{j}$ and $k>\pi_i>\pi_{k}$.  Therefore, $e_i = \pi_i-1$, $e_{j} = \pi_{j}-1$ and $e_{k} = \pi_{k} -1$.  So, since $\pi_i>\max\{\pi_{j},\pi_{k}\}$ and $\pi_{j} \neq \pi_{k}$,  $e_i, e_{j}, e_{k}$ forms an occurrence of $(>, \neq, >)$.

Now assume $\pi_i> i$.  Recall that $e_i$ is the $t$-th smallest element of $\{e_{i+1},\ldots,e_n\}$ if $\pi_i$ is the $t$-th largest element of $\{\pi_1,\pi_2,\ldots,\pi_i\}$.   Since $\pi_a,\pi_b$ are larger than and occur before $\pi_i$, we know that $t$ is at least 3.  If $e_{j'}$ and $e_{k'}$ are the two smallest distinct values in the set $\{e_{i+1},\ldots,e_n\}$, we are guaranteed that $e_i > e_{j'}$ and $e_i > e_{k'}$; so, $e_i, e_{j'}, e_{k'}$ form the pattern $(>, \neq, >)$.

For the converse, let $\pi \in \Sn_n$ such that $e= \phi(\pi)$ has indices $i < j < k$ such $e_i > e_j$, $e_i > e_k$, and $e_i \not = e_k$.  We show that $\pi$ contains an entry that matches $\MMP(0,2,0,2)$.

If $e_i \in \{e_{i+1}, \ldots, e_n\}$ then,  by definition of $\phi$, $\pi_i > i$ and $e_i$ is the $k$th smallest element of $\{e_{i+1}, \ldots, e_n\}$ for some $k \geq 2$, noting that  the distinct elements $e_j$ and $e_k$ are both less than $e_i$.  Then, again by definition of $\phi$, $\pi_i$ is the $k$th largest element of $\{\pi_1, \ldots, \pi_i\}$, so there is some $a,b < i$ with $\pi_a, \pi_b > \pi_i$.  To show that $\pi_i$ matches $\MMP(0,2,0,2)$ it remains to show there are at least two elements of $\{\pi_{i+1}, \ldots, \pi_n\}$ that are smaller than $\pi_i$.  The number of elements in $\pi$ that are larger than $\pi_i$ is $n-\pi_i < n-i$.  At least two of these are to the left of $\pi_i$ in $\pi$.  Thus at most $n-i-2$ can be in the $n-i$ positions to the right of $\pi_i$.

If $e_i  \not \in \{e_{i+1}, \ldots, e_n\}$ then,  by definition of $\phi$, $\pi_i \leq i$ and $e_i = \pi_i -1$.
Since $e_j \leq e_i \leq i < j<k$ and $e_k \leq e_i \leq i < k$, we have $e_j = \pi_j -1$ and $e_k = \pi_k -1$, so that $\pi_i\pi_j\pi_k$ has the pattern 321 or 312.  To show that $\pi_i$ matches $\MMP(0,2,0,2)$, it remains to show there is an $a<i$ and $b<i$ such that $\pi_a, \pi_b > \pi_i$.  Note that there are $n-\pi_i  \geq n-i$ elements of $\pi $ larger than $\pi_i$.  The $n-i$ positions to the right of $\pi_i$ hold at least two elements smaller that $\pi_i$.  So there must be two larger element to the left of $\pi_i$, as desired.
\end{proof}

The bijection $\phi$ turns out to be a versatile tool, giving interesting results when restricted to $\Sn_n(\MMP(k,0,k,0))$ for any positive integer $k$.  In this case, $\phi(\Sn_n(\MMP(k,0,k,0)))$ maps to inversion sequences that avoid a particular set of length $k+1$ patterns.  

\subsection{Class 5040: $n!$}
\label{section:5040}

The last equivalence class of patterns in this section is the set of those avoided by all inversion sequences.
There are 41 such patterns among our 343, including the representative below. One such class is all inversion sequences with no $i>j>k$ such that $e_i=e_j=e_k$ and $e_i \neq e_k$.
  
\section{Results about patterns whose sequences don't appear in the OEIS \cite{Sloane}}

Table \ref{questions} lists all equivalence classes of the patterns $\rho \in \{<,>,\leq,\geq,=, \neq,-\}^3$
whose avoidance sequences did not appear in the OEIS (and have subsequently been entered).  We were able to derive the avoidance sequences for a few of these patterns and prove Wilf-equivalence of some others. 
In this section we describe our results and leave identification of the avoidance sequences of the remaining patterns in Table \ref{questions} as questions for future study.

\subsection{Counting results}
\label{counting}

\subsubsection{Class 805: Avoiding $e_i \leq e_j > e_k$  and $e_i \neq e_k$}
\label{section:805}

We were able to prove that $\I_n(\leq,>,\neq)$ (A279557) is counted by a sum of Catalan numbers. Recall that $C_n = \frac{1}{n+1} {2n \choose n}$ and $C(x) = \sum_{n \geq 0} C_nx^n = \frac{1-\sqrt{1-4x}}{2x}$.

Observe that $\I_n(\leq,>,\neq)$ is the set of $e \in \I_n$ satisfying, for some $t$ with $1 < t \leq n$ and k with $t<k<n$,
\[0 = e_1 = e_2 = \ldots = e_{t-1}<e_t \leq e_k \leq e_{k+1} \leq \ldots \leq  e_n\]
with $0=e_{t+1}=\cdots=e_{k-1}$.

\begin{lemma}
Let $A(x) = \sum_{n \geq 0} |\I_n(\leq,>,\neq)|x^n$. Then $A(x) = \frac{(1-2x)(1-2x-\sqrt{1-4x})}{2x^2(1-x)}$.
\end{lemma}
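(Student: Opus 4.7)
The plan is to convert the structural characterization from the preceding observation into a generating function for $A(x)$, where $C(x) = \sum_{n \geq 0} C_n x^n = \frac{1-\sqrt{1-4x}}{2x}$ is the Catalan generating function satisfying $xC(x)^2 = C(x) - 1$. Every $e \in \I_n(\leq,>,\neq)$ falls into exactly one of three mutually exclusive types: (i) the all-zero sequence $0^n$; (ii) sequences with exactly one positive entry, of the form $0^a \cdot v \cdot 0^b$ with $a \geq 1$, $1 \leq v \leq a$, $b \geq 0$, and $a + b + 1 = n$; or (iii) sequences with at least two positive entries, of the form $0^a \cdot v \cdot 0^b \cdot w_1 w_2 \cdots w_m$ with $a \geq 1$, $1 \leq v \leq a$, $b \geq 0$, $m \geq 1$, $v \leq w_1 \leq \cdots \leq w_m$, and inversion-sequence bound $w_j \leq a + b + j$. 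Type (i) contributes $\frac{1}{1-x}$, and since for each $n$ the number of type-(ii) sequences is $\sum_{a=1}^{n-1} a = \binom{n}{2}$, type (ii) contributes $\frac{x^2}{(1-x)^3}$.

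For type (iii) I would substitute $u_j = w_j - v$ and $c = a - v \geq 0$ to reduce the tail count to $F_m(c+b)$, where $F_m(d)$ denotes the number of weakly increasing sequences $0 \leq u_1 \leq \cdots \leq u_m$ satisfying $u_j \leq d+j$. A reflection-principle count yields $F_m(d) = \binom{2m+d+1}{m} - \binom{2m+d+1}{m-1}$, which via the classical identity $[x^m]C(x)^k = \frac{k}{2m+k}\binom{2m+k}{m}$ equals $[x^m]\,C(x)^{d+2}$. Hence $\sum_{m \geq 0} F_m(d)x^m = C(x)^{d+2}$, and the bivariate generating function is $U(x,y) := \sum_{m,d \geq 0} F_m(d) x^m y^d = \frac{C(x)^2}{1-yC(x)}$.

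The weighted sum $\sum_{m,d \geq 0}(d+1)F_m(d)\,x^m y^d$, which is what is needed to track pairs $(c,b)$ with $c+b=d$, equals $\frac{\partial}{\partial y}\bigl[y\,U(x,y)\bigr] = \frac{C(x)^2}{(1-yC(x))^2}$. Specializing to the diagonal $y = x$ and using the Catalan identity $1 - xC(x) = 1/C(x)$ collapses this to $C(x)^4$. Combining the factor $x/(1-x)$ from $\sum_{v \geq 1} x^v$ with the overall length $v+c+b+m+1$ and subtracting the $m=0$ contribution (which is already accounted for in type (ii)), the type-(iii) generating function becomes $\frac{x^2}{1-x}\bigl[C(x)^4 - \frac{1}{(1-x)^2}\bigr]$.

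Adding the three contributions produces a clean cancellation: the $-\frac{x^2}{(1-x)^3}$ term in type (iii) kills the type-(ii) term, leaving $A(x) = \frac{1 + x^2 C(x)^4}{1 - x}$. To match the stated closed form, I would verify $(1 - 2x)C(x)^2 = 1 + x^2 C(x)^4$, which follows in one line from $xC(x)^2 = C(x) - 1$; this rewrites $A(x) = \frac{(1-2x)C(x)^2}{1-x}$, and substituting $C(x)^2 = \frac{1 - 2x - \sqrt{1 - 4x}}{2x^2}$ delivers the claimed formula. The main obstacle is the bookkeeping in type (iii): the key insight that rescues the computation is recognizing the tail counts as ballot numbers satisfying $\sum_{m \geq 0} F_m(d) x^m = C(x)^{d+2}$, which converts an otherwise unwieldy triple sum over $(a, v, b, m)$ into tractable rational expressions in $C(x)$.
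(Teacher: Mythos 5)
Your proof is correct, and the generating function you obtain, $A(x) = \frac{1+x^2C(x)^4}{1-x} = \frac{(1-2x)C(x)^2}{1-x}$, agrees with the paper's closed form (note $\frac{1-2x}{(1-x)(1-x-xC(x))} = \frac{(1-2x)C(x)^2}{1-x}$ via $1-xC(x)=1/C(x)$), and I checked it reproduces $1,2,6,20,68,233,805,\ldots$. However, your route is genuinely different from the paper's. The paper decomposes $\I_n(\leq,>,\neq)$ according to the position $t$ of the last \emph{maximal} entry (one with $e_t = t-1$): sequences with no maximal entry are $0\cdot e'$ for $e'\in\I_{n-1}(\leq,>,\neq)$, and the two remaining cases splice a weakly increasing sequence from $\I(-,>,-)$ (counted by $C_n$) after the last maximal entry, either onto an all-zero prefix or onto a shorter nonzero member of the same class. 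This yields a functional equation $A(x) = 1 + xA(x) + \frac{x}{1-x}(C(x)-1) + (\cdots)\,xC(x)$ that is then solved for $A(x)$; the trick of cutting at a maximal entry makes the inversion-sequence ceiling on the suffix automatic, so no explicit ballot-type count is needed. You instead sum directly over the explicit shape $0^a\cdot v\cdot 0^b\cdot w_1\cdots w_m$, which forces you to count weakly increasing tails under the staircase bound $w_j\le a+b+j$; your identification of these counts as ballot numbers with $\sum_{m\ge 0}F_m(d)x^m = C(x)^{d+2}$ is the key step, and the subsequent manipulations (the derivative $\partial_y[yU(x,y)]$ to weight by $d+1$, the collapse to $C(x)^4$ on the diagonal $y=x$, and the cancellation of the type-(ii) term against the $m=0$ correction) are all sound. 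Your approach is non-recursive and delivers the closed form without solving an equation, at the price of the reflection-principle/ballot-number input; the paper's is shorter once the maximal-entry decomposition is seen, but leans on the prior enumeration of $\I_n(-,>,-)$ and on solving for $A(x)$.
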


\begin{proof}
For any $e = (e_1,e_2,\ldots,e_n) \in \I_n(\leq,>,\neq)$, $e$ satisfies one of the following three cases:

\begin{enumerate}
\item $e = \emptyset$ or $e$ has no maximal entries,
\item $e = (0,0,\ldots,0,e_t,e_{t+1},\ldots,e_n)$ where $t$ is the index of the last occurrence of a maximal entry of $e$, or
\item $e = (0,0,\ldots,0,e_t,\ldots,e_j,\ldots,e_n)$ where $e_t \neq 0$ and $j$ is the index of the last occurrence of a maximal entry of $e$.
\end{enumerate}

The generating function for case 1 is $(1+xA(x))$, since any inversion sequence in $\I_n(\leq,>,\neq)$ with no maximal entries can be constructed by appending a 0 to the front of an inversion sequence in $\I_{n-1}(\leq,>,\neq)$.

Let $f=(f_1,f_2,\ldots,f_k)$ be an inversion sequence that avoids $(-,>,-)$ (that is, $f$ is an inversion sequence with weakly increasing entries, as seen in Section \ref{section:429}). Recall that $|\I_n(-,>,-)| = C_n$.  Then any inversion sequence in case 2 is of the form $(0, 0, \ldots, f_1, t-1, \sigma_{t-1}(f_2,f_3,\ldots,f_k))$, where $t-1$ occurs in position $t$ (and is therefore maximal). Notice that, in order for this construction to fall in case 2, $f$ must have length greater than zero.  It follows that the generating function for case 2 is $\frac{x}{1-x}\cdot (C(x)-1)$.

Finally, case 3 consists of all inversion sequences of the form $e'.(j-1).(f+j-1)$ where $e' \in (\I_{j-1}(\leq,>,\neq)\backslash\{(0,0,\ldots,0)\})$ and $(f+j-1)$ denotes the sequence obtained by adding $j-1$ to each entry of some $f \in \I_{n-t}(-,>,-)$. The generating function for this case is $(A(x)-\frac{x}{1-x})\cdot x \cdot C(x)$.

From these three disjoint cases, we have 
\[A(x) = 1+xA(x)+\frac{x}{1-x}\cdot (C(x)-1)+(A(x)-\frac{x}{1-x})\cdot x \cdot C(x).\]

Solving for $A(x)$ gives \[A(x) = \frac{1-2x}{(1-x)(1-x-xC(x))}\]

By substituting the known expression for $C(x)$, and through conjugation and algebraic rearrangement, the result follows.
\end{proof}

\begin{theorem}
$|\I_n(\leq,>,\neq)|= C_{n+1}-\sum_{i=1}^n C_i$, where $C_n = \frac{1}{n+1} {2n \choose n}$.
\end{theorem}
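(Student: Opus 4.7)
The plan is to extract the coefficient of $x^n$ directly from the generating function $A(x)=\frac{(1-2x)(1-2x-\sqrt{1-4x})}{2x^2(1-x)}$ provided by the preceding lemma, by first rewriting $A(x)$ in terms of the Catalan generating function $C(x)=\frac{1-\sqrt{1-4x}}{2x}$ and then using standard operations on formal power series.

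First I would substitute $\sqrt{1-4x}=1-2xC(x)$ into the numerator to obtain
\[
1-2x-\sqrt{1-4x}=1-2x-(1-2xC(x))=2x(C(x)-1).
\]
Plugging this into the lemma's formula collapses the $2x$ and one factor of $x$ in the denominator, yielding the cleaner identity
\[
A(x)=\frac{(1-2x)(C(x)-1)}{x(1-x)}.
\]
At this point the proof becomes purely bookkeeping on power series.

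Next I would use the expansions $\frac{C(x)-1}{x}=\sum_{n\geq 0}C_{n+1}x^n$ and $\frac{1}{1-x}=\sum_{n\geq 0}x^n$ to get
\[
\frac{C(x)-1}{x(1-x)}=\sum_{n\geq 0}\Bigl(\sum_{i=1}^{n+1}C_i\Bigr)x^n,
\]
and then apply the operator $(1-2x)$, which sends a series $\sum a_n x^n$ to $\sum (a_n-2a_{n-1})x^n$ (with $a_{-1}=0$). For $n\geq 1$ the coefficient of $x^n$ in $A(x)$ is therefore
\[
\sum_{i=1}^{n+1}C_i-2\sum_{i=1}^{n}C_i=C_{n+1}-\sum_{i=1}^{n}C_i,
\]
which is the claimed formula. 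A quick sanity check on small values (e.g.\ $n=1,2$ and $n=7$ giving $805$, matching the entry in Table \ref{questions}) confirms the computation.

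No significant obstacle is expected: all the nontrivial combinatorial work was done in the preceding lemma when establishing the functional equation for $A(x)$. The only mildly delicate point is remembering the shift caused by the factor $\frac{1}{x}$ (which is why the index on the right-hand side runs up to $n+1$), and confirming that the constant term behaves correctly so that the formula is valid for all $n\geq 1$.
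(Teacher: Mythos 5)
Your proposal is correct and follows essentially the same route as the paper: the paper sets $B(x)=\frac{C(x)-1}{x}-\frac{C(x)-1}{1-x}$, which is exactly your intermediate form $\frac{(1-2x)(C(x)-1)}{x(1-x)}$, and then verifies $A(x)=B(x)$ algebraically. Your version merely runs the algebra in the other direction (simplifying $A(x)$ and extracting coefficients rather than expanding the claimed answer and matching), which is a fine and slightly more explicit presentation of the same argument.
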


\begin{proof}
Let $B_n = C_{n+1}-\sum_{i=1}^n C_i$. Then $\sum_{n \geq 0} B_nx^n = \frac{C(x)-1}{x}-\frac{C(x)-1}{1-x}$. By substituting $C(x) = \frac{1-\sqrt{1-4x}}{2x}$ and simplifying, it follows that $A(x)=B(x)$ and therefore $B_n = |\I_n(\leq,>,\neq)|$.
\end{proof}

\subsubsection{Class {\bf 1016}: Avoiding $e_i > e_j$ and $e_i \not = e_k$}
\label{section:1016}

We can show that the counting sequence for $|\I_n(>,-,\neq)|$ (A279560) is as follows.  We omit the details since we hope to find a simpler formula and nicer explanation.

\[
|\I_n(>,-,\neq)| =
{2(n-1) \choose n-1} +
\sum_{k=2}^{n-2}
\sum_{i=1}^{k-1}
\sum_{u=1}^i
\sum_{d=0}^{u-1} \frac{i-d+1}{i+1} {i+d \choose d}.
\]

\subsubsection{1079(A,B):  sum of binomial coefficients}
\label{section:1079} 

The sequences in $\I_n(>,\neq,-)$ (Class 1079A, A279561)) have a nice unimodality characterization.  They are the inversion sequences $e \in \I_n$ satisfying for some $t$:
$$e_1 \leq  e_2 \leq \ldots \leq e_t \geq e_{t+1} = e_{t+2} = \ldots = e_n.$$   From this characterization, we can show the following.

\begin{theorem}
$|\I_n(>, \neq, -)| = 1 + \sum_{i=1}^{n-1} {2i \choose i-1}$.
\end{theorem}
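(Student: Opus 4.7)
My plan is to leverage the unimodality characterization to decompose $\I_n(>,\neq,-)$ and then count the pieces using Catalan numbers. The indexing of $t$ in the given characterization is not unique (e.g., when $e_t = e_{t+1}$), so the first thing I would do is canonicalize it: define $t(e)$ to be the largest index with $e_1 \leq e_2 \leq \cdots \leq e_t$. Then the characterization yields an exhaustive, disjoint partition of $\I_n(>,\neq,-)$ into (i) the weakly increasing sequences of length $n$ (case $t = n$) and (ii) for each $1 \leq t < n$, the sequences with $e_t > e_{t+1} = e_{t+2} = \cdots = e_n = v$ for some $0 \leq v < e_t$. By Class 429A, (i) is counted by $C_n$. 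For fixed $t$ in (ii), the weakly increasing prefix $(e_1,\ldots,e_t)$ may be any weakly increasing inversion sequence of length $t$, and the tail value $v$ has exactly $e_t$ choices, so the contribution is $\sum_{\text{wi } e \in \I_t} e_t =: S(t)$. Thus
\[
|\I_n(>,\neq,-)| \;=\; C_n \;+\; \sum_{t=1}^{n-1} S(t).
\]

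The key computation is to evaluate $S(t)$ in closed form, and I would do this by relating $S(t)$ back to Catalan numbers via a simple one-step recursion. Let $T(t,k)$ be the number of weakly increasing inversion sequences of length $t$ ending in value $k$. Any weakly increasing inversion sequence of length $t+1$ is obtained from such a prefix by appending some value in $\{k, k+1, \ldots, t\}$, so
\[
C_{t+1} \;=\; \sum_{k=0}^{t-1} (t-k+1)\,T(t,k) \;=\; (t+1)C_t \;-\; S(t),
\]
giving $S(t) = (t+1)C_t - C_{t+1}$.

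Finally I would substitute this closed form and telescope: the second sum collapses, leaving
\[
C_n + \sum_{t=1}^{n-1}\bigl[(t+1)C_t - C_{t+1}\bigr] \;=\; C_n + \sum_{t=1}^{n-1} t C_t + C_1 - C_n \;=\; 1 + \sum_{t=1}^{n-1} t C_t,
\]
and I would finish by invoking the elementary identity $t C_t = \binom{2t}{t-1}$ to reach the claimed formula. The only step that requires genuine care is choosing the canonical $t$ so that the decomposition is both exhaustive and disjoint; everything after that reduces to routine bookkeeping with the Catalan recurrence.
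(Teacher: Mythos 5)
Your proof is correct, and it proceeds exactly along the route the paper indicates but does not write out: it starts from the unimodality characterization $e_1 \leq \cdots \leq e_t \geq e_{t+1} = \cdots = e_n$, canonicalizes $t$ as the first descent to get a disjoint decomposition, and reduces everything to Catalan counts of weakly increasing prefixes. The key identity $S(t) = (t+1)C_t - C_{t+1}$, the telescoping, and the final step $tC_t = \binom{2t}{t-1}$ all check out (and the resulting values $1,2,6,21,77,\ldots$ match the paper's Table 3), so your argument is a complete and valid proof of the theorem the paper states without proof.
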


Additionally, we conjecture that $\I_n(<,>,\neq)$ (Class 1079B) is Wilf-equivalent, but have not proven this.

\subsubsection{4306A,B:  $\I_n(210)$, $\I_n(201)$}

The sets $\I_n(-,<,>)=\I_n(201)$ and $\I_n(>,>,-)=\I_n(210)$ were shown to be Wilf-equivalent by Corteel, et al.\ \cite{paisI} via a bijection, which we made use of in Section \ref{section:2958}. 
An alternate proof of this fact was discovered by Mansour and Shattuck \cite{Mansour}.
 A recurrence to compute $|\I_n(201)|$  was also derived in \cite{paisI} .

\subsection{Wilf-equivalence results}
\label{equivalences}

In the remainder of the section we show that the bijection $\alpha$ described in \eqref{alpha} of Section \ref{section:2958} proves Wilf-equivalence of all of the following pairs of patterns:
663A,B; 746A,B; 1833A,B;  and 1953A,B.

\subsubsection{Class {\bf 1953A}: Avoiding $e_j>e_k$ and $e_i>e_k$, and Class {\bf 1953B}: Avoiding $e_i \neq e_j \geq e_k$ and $e_i>e_k$}

\begin{theorem}
The patterns $(-,>,>)$ and $(\neq,\geq,>)$, defining classes {\rm 1953A} and {\rm 1953B} respectively, are Wilf-equivalent.
\label{thm:1953AB}
\end{theorem}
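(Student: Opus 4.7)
The plan is to deploy the bijection $\alpha$ of \eqref{alpha} from Section \ref{section:2958} and show it restricts to a bijection between the two classes. My first step is to translate the relation-triples into length-three word avoidance: a short case split on whether $e_i$ is less than, equal to, or greater than $e_j$ gives
\[
\I_n(-,>,>) = \I_n(110, 120, 210) \quad \text{(class 1953A)},
\]
\[
\I_n(\neq, \geq, >) = \I_n(100, 120, 210) \quad \text{(class 1953B)}.
\]
These sit inside the domain $\I_n(110, 210)$ and codomain $\I_n(100, 210)$ of $\alpha$ respectively, so $\alpha$ and $\beta$ are already defined on them, and it suffices to check that $\alpha$ and $\beta$ both preserve $120$-avoidance.

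For $\alpha$ I would argue by contradiction: suppose $e \in \I_n(110, 120, 210)$ but $e' = \alpha(e)$ contains a $120$ at positions $i<j<k$, meaning $e'_i < e'_j$ and $e'_i > e'_k$. Property \eqref{alphaprop2} then forces $e'_k = e_k$, exactly as in the proof of Theorem \ref{thm:2549AC}: otherwise $e'_k = \max\{e_1, \ldots, e_k\} \geq e'_i$ would contradict $e'_i > e'_k$. I would then split four ways on whether $e'_i = e_i$ and whether $e'_j = e_j$; in the ``promoted'' cases write $e'_j = e_t$ with $t<j$ realising $\max\{e_1,\ldots,e_j\}$, and analogously $e'_i = e_s$ with $s < i$. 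The routine cases immediately exhibit a $120$ in $e$ at positions $i<j<k$, $s<j<k$, $i<t<k$, or $s<t<k$; the only non-routine subcase is $e'_i = e_i$ with $t < i$, which produces the chain $e_t > e_i > e_k$ and so a $210$ in $e$ instead. Either way, $e$ contains a forbidden pattern, a contradiction. The argument for $\beta$ is the mirror image, using the dual characterisation $e'_j = \min\{e_j, \ldots, e_n\}$ at repeated entries.

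The main obstacle is the book-keeping in the subcase where both $e'_i$ and $e'_j$ are promoted: one needs $s < t$ in order to locate the $120$ in $e$ at positions $s<t<k$. This inequality is forced by the observation that if $t \leq s$ then $t \leq i$, so $e_t \in \{e_1,\ldots,e_i\}$ and hence $e_s = \max\{e_1,\ldots,e_i\} \geq e_t$, contradicting $e_s = e'_i < e'_j = e_t$. This mirrors the chain of reductions used for classes 2549A and 2549C in the proof of Theorem \ref{thm:2549AC}, with the pattern $201$ swapped for $120$ throughout; the cascade of elementary inequalities arising from the max/min characterisation of $\alpha$ and $\beta$ is routine but has to be tracked carefully to verify that every case either produces the required $120$ in $e$ or collapses via $110$- or $210$-avoidance.
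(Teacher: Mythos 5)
Your proposal is correct and follows essentially the same route as the paper's proof: the same identification of the classes with $\I_n(110,120,210)$ and $\I_n(100,120,210)$, the same reduction to showing that $\alpha$ and $\beta$ preserve $120$-avoidance, and the same four-way case split (including the $t<i$ subcase yielding a $210$ and the $s<t$ argument in the doubly-promoted case). The only cosmetic difference is that your write-up of the doubly-promoted case is a touch cleaner than the paper's, which retains a redundant draft paragraph there.
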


\begin{proof}
The inversion sequences in classes 1953A and 1953B are those in 
$\I_n(110,210,120)$ and $\I_n(100,210,120)$, respectively.  Notice that these are the inversion sequences in $\I_n(110,210)$ (class {\rm 2958C}) and $\I_n(100,210)$ (class {\rm 2958A}), respectively, that avoid 120. So it suffices to show that both $\alpha: \I_n(110,210) \rightarrow \I_n(100,210)$ and $\beta=\alpha^{-1}$ preserves 120-avoidance.

Suppose $e \in \I_n(110,210)$ avoids 120, but for $e'=\alpha(e)$ there exist $i < j < k$ such that $e_i'<e_j'>e_k'$ and $e_i' > e_k'$.  By \eqref{alphaprop2}, $e_k'=e_k$.  Notice that we cannot have both $e'_i=e_i$ and $e'_j=e_j$, since this would create a 120 in $e$.

Suppose first that $e_j'=e_j$.  Since $e$ avoids 120, $e_i' \neq e_i$ so, by definition of $\alpha$, there is an $s \in [i-1]$ such that $e_s=e_i'$.  But then $e_se_je_k$ forms a 120 in $e$.

So, assume that $e_i'=e_i$.  Then, since $e$ avoids 120, $e_j' \neq e_j$.  So, there must be a $t \in [j-1]$ such that $e_t=e_j'$. If $i < t < k$ then $e_ie_te_k$ is a 120 in $e$.  Otherwise, $t < i < k$ and $e_te_ie_k$ is a 210 in $e$, which is impossible.

Finally, if both $e_i' \not =e_i$ and $e_j' \not=e_j$, then let $s$ and $t$ be as above.  If $s < t$ then $e_se_te_k$ forms a 102 in $e$. Otherwise, $e_te_se_k$ forms a 210 in $e$.  Both cases lead to a contradiction.

If both $e_i' \not =e_i$ and $e_j' \not=e_j$, then let   $s \in [i-1]$ and $j \in [j-1]$  be indices such that $e'_i=e_s$ and $e'_j=e_t$.  Since $e'_i=\max\{e_1,e_2,\ldots,e_i\}$ and $e'_j=\max\{e_1,e_2,\ldots,e_j\}$, and we have $e'_i < e'_j$, it must be the case the $e_s<e_t$ and $s<t$.  Therefore  $e_se_te_k$ is an occurrence of 120 in $e$, giving a contradiction.

It follows that $\alpha$ preserves 120-avoidance.  Showing that $\beta$ preserves 120-avoidance is similar.
\end{proof}

\subsubsection{Class {\bf 1833A}: Avoiding $e_j \neq e_k$ and $e_i>e_k$, and Class {\bf 1833B}: Avoiding $e_i \neq e_j$ and $e_i>e_k$}

\begin{theorem}
The patterns $(-,\neq,>)$ and $(\neq,-,>)$, defining classes {\rm 1833A }and {\rm 1833B} respectively, are Wilf-equivalent.
\label{thm:1833AB}
\end{theorem}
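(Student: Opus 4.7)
The plan is to reuse the bijection $\alpha:\I_n(110,210)\to\I_n(100,210)$ from \eqref{alpha} (and its inverse $\beta$) and show that it restricts to a bijection between the sets defining classes 1833A and 1833B.

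First I would translate each relational pattern into a list of forbidden length-3 words by case analysis over the orderings of $(e_i,e_j,e_k)$. A direct check shows that
\[
\I_n(-,\neq,>) = \I_n(110,120,201,210)
\]
and
\[
\I_n(\neq,-,>) = \I_n(100,120,201,210).
\]
Thus class 1833A is the subset of $\I_n(110,210)$ that additionally avoids $120$ and $201$, and 1833B is the analogous subset of $\I_n(100,210)$.

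Since $\alpha$ is already a bijection between $\I_n(110,210)$ and $\I_n(100,210)$ by Theorem~\ref{thm:2958BC}, it remains only to verify that $\alpha$ and $\beta$ each preserve $120$-avoidance and $201$-avoidance. But the former is exactly the content of Theorem~\ref{thm:1953AB}, and the latter is exactly the content of Theorem~\ref{thm:2549AC}. Combining these two preservation results with the bijection of Theorem~\ref{thm:2958BC} immediately yields the desired bijection between $\I_n(110,120,201,210)$ and $\I_n(100,120,201,210)$, and hence Wilf-equivalence of 1833A and 1833B.

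There is no real obstacle here once the equivalence of the relational patterns with the above word patterns is verified; the theorem reduces to observing that the two previously-established preservation properties of $\alpha$ hold simultaneously. This is immediate because each case analysis in Theorems~\ref{thm:2549AC} and \ref{thm:1953AB} only invokes the $(100,210)$- or $(110,210)$-structure of $\alpha$ and never uses avoidance of any further pattern, so the two arguments can be combined without modification.
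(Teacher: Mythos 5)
Your proposal is correct and follows essentially the same route as the paper: identify the two classes with the word-avoidance sets $\I_n(110,120,201,210)$ and $\I_n(100,120,201,210)$, then observe that the bijection $\alpha$ of Theorem~\ref{thm:2958BC} and its inverse preserve both $201$-avoidance (established in the proof of Theorem~\ref{thm:2549AC}) and $120$-avoidance (Theorem~\ref{thm:1953AB}), so $\alpha$ restricts to the desired bijection. No gaps; this is the paper's argument.
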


\begin{proof}
The inversion sequences in classes 1833A and 1833B are those in $\I_n(110,210,120,201)$ and $\I_n(100,210,120,201)$, respectively. It was shown in Section \ref{section:2958} that the bijection $\alpha: \I_n(110,210) \rightarrow \I_n(100,210)$ preserves 201-avoidance, as does its inverse $\beta$.  It follows from Theorem \ref{thm:1953AB} that $\alpha$ and $\beta$ preserves 120-avoidance as well. So $\alpha(\I_n(-,\neq,>)) = \I_n(\neq,-,>)$.
\end{proof}

\subsubsection{Class {\bf 746A}: Avoiding $e_j > e_k$ and $e_i \geq e_k$, and Class {\bf 746B}: Avoiding $e_i \neq e_j \geq e_k$ and $e_i \geq e_k$}

\begin{theorem}
The patterns $(-,>,\geq)$ and $(\neq,\geq,\geq)$, defining classes {\rm 746A} and {\rm 746B} respectively, are Wilf-equivalent.
\label{thm:746AB}
\end{theorem}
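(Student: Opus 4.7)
The plan is to reuse the bijection $\alpha\colon \I_n(110,210) \to \I_n(100,210)$ from Theorem \ref{thm:2958BC} (with inverse $\beta$) and show that it restricts to a bijection between the 746A and 746B avoidance sets. Translating the relational patterns into word patterns of length 3, one has $\I_n(-,>,\geq) = \I_n(010,110,120,210)$ and $\I_n(\neq,\geq,\geq) = \I_n(010,100,120,210)$. These are exactly the subsets of $\I_n(110,210)$ and $\I_n(100,210)$ that also avoid $010$ and $120$. Since Theorem \ref{thm:1953AB} already shows that $\alpha$ and $\beta$ preserve $120$-avoidance, the entire proof reduces to showing that they also preserve $010$-avoidance.

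For $\alpha$, I would argue by contradiction: assume $e \in \I_n(010,110,120,210)$ but $e' = \alpha(e)$ has $e'_i = e'_k < e'_j$ for some $i < j < k$. The key structural fact from the proof of Theorem \ref{thm:2958BC} is that $e'_m = e_m$ unless $e_m$ is repeated later in $e$, in which case $e'_m = \max\{e_1,\ldots,e_m\}$ and, by \eqref{alphaprop2}, dominates all earlier entries of $e'$. A case split based on which of $e_i, e_j, e_k$ are unchanged either recovers the same $010$ directly in $e$, or produces an auxiliary index $s \leq i$ witnessing the running maximum; comparing $e_s$ with $e_j$ and $e_k$ then forces one of $010$, $120$, or $210$ to occur in $e$, contradicting the assumption. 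The argument for $\beta$ is dual, using that $e'_m = e_m$ unless $e_m$ is repeated earlier in $e$ (in which case $e'_m = \min\{e_m,\ldots,e_n\}$), and a symmetric case analysis locates an auxiliary later index producing the same forbidden patterns in $e$.

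The main obstacle is managing the case analysis, particularly the subcase in which both $e_i$ and $e_k$ are altered by the bijection. In this situation the common value $e'_i = e'_k$ is achieved at some auxiliary index (before $i$ for $\alpha$, after $k$ for $\beta$), and one must split further on whether $e_j > e_i$, $e_j = e_i$, or $e_j < e_i$. The hypothesis that $e$ avoids $120$ (not just $010$) is exactly what is needed to handle the middle-sized case: in the $e_j > e_i$ branch the auxiliary index together with $e_j$ and $e_k$ realizes a $120$ pattern in $e$, while the $e_j < e_i$ branch realizes a $210$, and the $e_j = e_i$ branch contradicts the assumption on whether $e_j$ is altered. Once these case-boundary bookkeeping issues are resolved, the closure of each case is short, and the bijection $\alpha$ together with $\beta$ gives the desired Wilf-equivalence.
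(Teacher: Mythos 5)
Your proposal follows essentially the same route as the paper: both reduce, via Theorem~\ref{thm:1953AB}, to showing that $\alpha$ and its inverse $\beta$ preserve $010$-avoidance, and then run the same case analysis on which entries of a putative $010$ occurrence in $\alpha(e)$ were altered, using \eqref{alphaprop2} and the running-maximum structure to force a forbidden pattern back in $e$. One small remark: the subcase you single out as the main obstacle (both $e_i$ and $e_k$ altered) is vacuous for $\alpha$, since an altered $e_k'$ would equal $\max\{e_1,\ldots,e_k\}$ and hence dominate $e_j'$, contradicting $e_j'>e_k'$; so $e_k'=e_k$ always, the real case split is only over $e_i$ and $e_j$, and no extra appeal to $120$-avoidance is needed at that point.
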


\begin{proof}
The inversion sequences in classes 746A and 746B are those in $\I_n(110,210,120,010)$ and $\I_n(100,210,120,010)$, respectively. These are, in turn, the inversion sequences in $\I_n(110,\allowbreak210,120)$ (class {\rm 1953A}) and $\I_n(100,210,120)$ (class {\rm 1953B}), respectively, that avoid 010.
By Theorem \ref{thm:1953AB}, $\alpha: \I_n(110,210) \rightarrow \I_n(100,210)$ and $\alpha^{-1}=\beta$ preserve 120-avoidance. So it suffices to show that both  $\alpha: \I_n(110,210) \rightarrow  \I_n(100,210)$ and $\alpha^{-1}=\beta$ preserves 010-avoidance.  

Suppose $e \in \I_n(110,210)$ avoids 010, but for $e'=\alpha(e)$ there exist $i < j < k$ such that
$e_i'<e_j'>e_k'$ and $e_i'= e_k'$.  By \eqref{alphaprop2}, $e_k'=e_k$. Since $e$ avoids 010, it follows that we cannot have both $e_i'=e_i$ and $e_j'=e_j$.

Suppose first that $e_j'=e_j$.  Since $e$ avoids 010, $e_i' \neq e_i$ so, by definition of $\alpha$ there is an $s \in [i-1]$ such that $e_s=e_i'$.  But then $e_se_je_k$ forms a 010 in $e$.

So, assume that $e_i'=e_i$.  Then, since $e$ avoids 010, $e_j' \neq e_j$.  So, there must be a $t \in [j-1]$ such that $e_t=e_j'$. If $i < t < k$ then  $e_ie_te_k$ is a 010 in $e$.  Otherwise, $t < i < k$ and $e_te_ie_k$ is a 100 in $e$, which is impossible.

If both $e_i' \not =e_i$ and $e_j' \not=e_j$, then let   $s \in [i-1]$ and $t \in [j-1]$  be indices such that $e'_i=e_s$ and $e'_j=e_t$.  Since $e'_i=\max\{e_1,e_2,\ldots,e_i\}$ and $e'_j=\max\{e_1,e_2,\ldots,e_j\}$, and we have $e'_i < e'_j$, it must be the case the $e_s<e_t$ and $s<t$.  Additionally, $e_s=e'_i=e'_k=e_k$.  Therefore  $e_se_te_k$ is an occurrence of 010 in $e$, giving a contradiction.

It follows that $\alpha$ preserves 010-avoidance.  Showing that $\beta$ preserves 010-avoidance is similar.
\end{proof}

\subsubsection{Class {\bf 663A}: Avoiding $e_j \neq e_k$ and $ e_i \geq e_k$, and Class {\bf 663B}: Avoiding $e_i \neq e_j$ and $e_i  \geq e_k$}

\begin{theorem}
The patterns $(-,\neq,\geq)$ and $(\neq,-,\geq)$, defining classes {\rm 663A} and {\rm 6633B} respectively, are Wilf-equivalent.
\label{thm:663AB}
\end{theorem}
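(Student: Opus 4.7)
The avoidance sets for classes 663A and 663B are
$\I_n(010,101,110,120,201,210)$ and $\I_n(010,100,101,120,201,210)$,
respectively, differing only in that 663A forbids $110$ while 663B forbids
$100$. Thus they are exactly the elements of classes 746A and 746B that
additionally avoid $101$. The plan is therefore to show that the bijection
$\alpha: \I_n(110,210) \rightarrow \I_n(100,210)$ of Theorem \ref{thm:2958BC}
(with inverse $\beta$) restricts to a bijection between 663A and 663B. By
Theorems \ref{thm:746AB}, \ref{thm:1953AB}, and \ref{thm:1833AB}, $\alpha$
and $\beta$ already preserve avoidance of $010$, $120$, and $201$, so the
only remaining task is to verify that they preserve $101$-avoidance.

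For the forward direction, I would suppose $e \in \I_n(110,210)$ avoids
$101$ and assume, for contradiction, that $e'=\alpha(e)$ contains a $101$
at positions $i<j<k$, meaning $e'_i > e'_j$, $e'_j < e'_k$, and
$e'_i = e'_k$. By \eqref{alphaprop2}, the inequality $e'_j < e'_i$ forces
$e'_j = e_j$. I would then split into cases based on whether $e'_i = e_i$
and whether $e'_k = e_k$. When both equalities hold, $(e_i,e_j,e_k)$ is
already a $101$ in $e$. When $e'_i \neq e_i$, the definition \eqref{alpha}
of $\alpha$ yields $s < i$ with $e_s = e'_i$, and $(e_s,e_j,e_k)$ is a
$101$ in $e$ when $e'_k = e_k$. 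The delicate case is $e'_k \neq e_k$:
there exists $r > k$ with $e_r = e_k$, and the identity
$e'_k = \max\{e_1, \ldots, e_k\}$ combined with $e'_i = e_i$ (which forces
$e_i$ not to recur after position $i$, ruling out $e_k = e_i$) yields
$e_i > e_k$ strictly. A three-way comparison of $e_j$ with $e_k$ then
finishes: $e_j < e_k$ gives a $201$ at $(i,j,k)$ in $e$, $e_j > e_k$ gives
a $210$, and $e_j = e_k$ contradicts $e'_j = e_j$ via the repeat $e_r$.
The same three-way comparison, applied to the triple $(s,j,k)$, handles
the remaining subcase where both $e'_i \neq e_i$ and $e'_k \neq e_k$.

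The argument that $\beta$ preserves $101$-avoidance is symmetric, using
the description $e'_j = \min\{e_j, \ldots, e_n\}$ (when $e_j$ has an
earlier match) from the proof of Theorem \ref{thm:2958BC}. I expect the
main obstacle to be bookkeeping rather than any new idea: the four
subcases all collapse to the same three-way comparison of $e_j$ with
$e_k$ that produces a forbidden $201$, $210$, or a repeat of $e_j$ past
position $j$, exactly as in the proofs of Theorems \ref{thm:1953AB},
\ref{thm:746AB}, and \ref{thm:1833AB}. Since 663 is the fourth class of
Wilf-equivalences established by restricting $\alpha$, the overall pattern
of argument is by now routine; the only genuine verification is that the
extra forbidden pattern $101$ is respected, and the case analysis above
confirms it.
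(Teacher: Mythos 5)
Your approach is the same as the paper's---restrict the bijection $\alpha$ of Theorem \ref{thm:2958BC} and check, pattern by pattern, that it and its inverse preserve the relevant avoidances---but your version is more complete than the published proof. The paper characterizes classes 663A and 663B as $\I_n(110,210,120,201,010)$ and $\I_n(100,210,120,201,010)$ and therefore only invokes preservation of $201$-, $120$-, and $010$-avoidance. That characterization omits the word pattern $101$: for $(-,\neq,\geq)$ the triple $(1,0,1)$ satisfies $e_j\neq e_k$ and $e_i\geq e_k$, and, e.g., $(0,0,2,1,2)\in\I_5(110,210,120,201,010)$ contains the relational pattern at positions $(3,4,5)$ while avoiding all five listed words. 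So the published proof is missing exactly the step you supply, namely that $\alpha$ and $\beta$ preserve $101$-avoidance. Your case analysis for that step is sound: the reduction to $e'_j=e_j$ via \eqref{alphaprop2}, followed by the three-way comparison of $e_j$ with $e_k$ producing a $201$, a $210$, or a forced repeat of $e_j$, is exactly in the style of Theorems \ref{thm:1953AB} and \ref{thm:746AB}.

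Three small points to tighten. First, 663A is not ``746A plus $101$-avoidance'': classes 746A/B do not forbid $201$ (for instance $(0,0,0,3,1,2)$ avoids $\{010,110,120,210,101\}$ but contains a $201$), so $201$-preservation is also needed---which you do invoke in the very next sentence, so the argument is unharmed, but that sentence should be corrected. Second, in the delicate case the parenthetical ``$e'_i=e_i$ forces $e_i$ not to recur'' is not quite right, since $e'_i=e_i$ also holds when $e_i$ recurs but already equals $\max\{e_1,\dots,e_i\}$; the strict inequality $e_i>e_k$ you want follows directly from $e_i=e'_k=\max\{e_1,\dots,e_k\}\neq e_k$. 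Third, state the preservation lemma for $e$ in the class itself (or at least for $e\in\I_n(110,210,201)$ avoiding $101$), since deriving ``a $201$ in $e$'' is only a contradiction once $201$-avoidance of $e$ is among the hypotheses.
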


\begin{proof}
The inversion sequences in classes 663A and 663B are those in $\I_n(110,210,120,201,\allowbreak010)$ and $\I_n(100,210,120,201,010)$, respectively. It was shown in Section \ref{section:2958} that the bijection $\alpha: \I_n(110,210) \rightarrow \I_n(100,210)$ preserves 201-avoidance, as does its inverse. It follows from Theorem \ref{thm:1953AB} that $\alpha, \beta$ preserve 120-avoidance and from Theorem \ref{thm:746AB}  that they preserve 010-avoidance as well. So $\alpha(\I_n(-,\neq,\geq) )= \I_n(\neq,-,\geq)$.
\end{proof}

\section{Concluding remarks}



The results in this work demonstrate that inversion sequences avoiding a triple of relations have connections to a vast array of pattern-avoiding classes and combinatorial sequences.  
Moreoever, as a pattern to avoid, a triple of relations can be more expressive than a single word of length 3, even though it is always equivalent to an avoidance {\it set} of words of length 3.  

On the other hand, $\I_n(001,210)$ is not the avoidance set of any triple of relations considered in this paper.  In ongoing work, we construct and examine the partially ordered set whose elements are the avoidance sets $\I_n(S)$,  ordered by inclusion, where $$S \subseteq \{000,001,010,100,011,101,110,123,132,213,231,312,321\}.$$

Several interesting  questions remain for future work, such those highlighted in Table~\ref{other triples} with a ``no'' in column 3.  For instance, is $\I_n(>,-,>)$ equinumerous with $\Sn_n(4231,42513)$ and is $\I_n(>,-,\leq)$ equinumerous with $\Sn_n(4123,4132,4213)$?  Are $\I_n(=,\geq,-)$  and  $\I_n(\geq,=,-)$ counted by the Bell numbers?

 Another fascinating open question is whether there is a bijective mapping to relate the Baxter permutations in $\Sn_n$ to $\I_n(\geq, \geq, >)$. There are also a number of lingering open enumeration problems: can enumeration formulas be found for some of the avoidance sets in Table \ref{questions}, such as $\I_n(010)$, $\I_n(100)$, $\I_n(120)$,  or $\I_n(201)=\I_n(210)$?

Pattern avoidance can be studied in more general $s$-inversion sequences.
For a given sequence $s=(s_1, s_2, \ldots, s_n)$ of positive integers, the $s$-inversion sequences $\I_n^{(s)}$ are defined by
\[
\I_n^{(s)} = \{ (e_1,e_2, \ldots, e_n)\in Z^n \ | \  0 \leq e_i < s_i, \ 1 \leq i \leq n\}.
\]
When $s=(1,2, \ldots, n)$, these are the usual inversion sequences.  For $s=(2,4, \ldots, 2n)$, there is a statistics-preserving bijection between $\I_n^{(s)}$ and signed permutations of $[n]$.  See \cite{survey} for other interesting families of $s$-inversion sequences, and for an overview of  their combinatorial and geometric relationships to $s$-lecture hall partitions and cones.

\section{Acknowledgements}

Thanks to Sylvie Corteel for helpful discussions about several of the sequences herein.  Thanks to Michael Weselcouch for independently verifying our computations of the avoidance sequences for the 343 patterns. Thanks to the referee of this paper for a careful reading of the manuscript and many helpful suggestions.
Thanks to Michael Albert for making his PermLab software freely available \cite{PermLab1.0}.
Thanks to the Simons Foundation for Collaboration Grant No. 244963 to the second author which supported the travel of the first author for collaboration.  Some of this 
material is based upon work supported by the National Science Foundation under Grant No. 1440140, while the second author was in residence at the Mathematical Sciences Research Institute in Berkeley, California, during the fall semester of 2017.

We once again extend sincere thanks to Neil Sloane and the OEIS Foundation, Inc. The On-Line Encyclopedia of Integer Sequences \cite{Sloane} was an invaluable resource for this project.
 
\bibliographystyle{jis}
\bibliography{pais2}

\end{document}